\newcommand{\bbC}{\mathbb{C}}
\newcommand{\bbN}{\mathbb{N}}
\newcommand{\bbQ}{\mathbb{Q}}
\newcommand{\bbR}{\mathbb{R}}
\newcommand{\bbT}{\mathbb{T}}
\newcommand{\bbZ}{\mathbb{Z}}
\newcommand{\calA}{\mathcal{A}}
\newcommand{\calB}{\mathcal{B}}
\newcommand{\calG}{\mathcal{G}}
\newcommand{\calI}{\mathcal{I}}
\newcommand{\calK}{\mathcal{K}}
\newcommand{\calL}{\mathcal{L}}
\newcommand{\calM}{\mathcal{M}}
\newcommand{\calT}{\mathcal{T}}
\newcommand{\calU}{\mathcal{U}}
\DeclareMathOperator{\id}{id}
\DeclareMathOperator{\one}{\mathbbm{1}}
\DeclareMathOperator{\dist}{dist}
\newcommand{\argument}{\mathord{\,\cdot\,}}
\DeclareMathOperator{\lin}{lin}
\DeclareMathOperator{\Ball}{B}
\newcommand{\ue}{\mathrm{e}}
\newcommand{\ui}{\mathrm{i}}
\DeclarePairedDelimiter{\norm}{\lVert}{\rVert}
\DeclarePairedDelimiter{\modulus}{\lvert}{\rvert}
\newcommand{\spec}{\sigma}
\newcommand{\Res}{\mathcal{R}}
\newcommand{\pnt}{{\operatorname{pnt}}}
\newcommand{\sgInftyON}[1]{{#1}_\infty}
\theoremstyle{definition}
\newtheorem{definition}{Definition}[section]
\newtheorem{remark}[definition]{Remark}
\newtheorem{example}[definition]{Example}
\newtheorem{examples}[definition]{Examples}
\newtheorem*{research_questions}{Research Questions}
\theoremstyle{plain}
\newtheorem{proposition}[definition]{Proposition}
\newtheorem{lemma}[definition]{Lemma}
\newtheorem{theorem}[definition]{Theorem}
\newtheorem{corollary}[definition]{Corollary}
\numberwithin{equation}{section}
\begin{document}

\normalem

\title[Uniform convergence]{Uniform convergence of operator semigroups without time regularity}

\author{Alexander Dobrick}
\address{Alexander Dobrick, Arbeitsbereich Analysis, Christian-Albrechts-Universit\"at zu Kiel, Ludewig-Meyn-Str.\ 4, 24098 Kiel, Germany}
\email{dobrick@math.uni-kiel.de}
\author{Jochen Gl\"uck}
\address{Jochen Gl\"uck, Fakultät für Informatik und Mathematik, Universität Passau, Innstraße 33, D-94032 Passau, Germany}
\email{jochen.glueck@uni-passau.de}
\subjclass[2010]{47D03; 47D06; 35K40; 35B40}
\keywords{Coupled heat equations; Schroedinger semigroups; matrix potential; long-term behaviour; semigroup representation; Jacobs--de Leeuw--Glicksberg theory; semigroup at infinity}
\date{\today}
\begin{abstract}
	When we are interested in the long-term behaviour of solutions to linear evolution equations, a large variety of techniques from the theory of $C_0$-semigroups is at our disposal. However, if we consider for instance parabolic equations with unbounded coefficients on $\bbR^d$, the solution semigroup will not be strongly continuous, in general. For such semigroups many tools that can be used to investigate the asymptotic behaviour of $C_0$-semigroups are not available anymore and, hence, much less is known about their long-time behaviour. 
	
	Motivated by this observation, we prove new characterisations of the operator norm convergence of general semigroup representations -- without any time regularity assumptions -- by adapting the concept of the ``semigroup at infinity'', recently introduced by M.~Haase and the second named author. Besides its independence of time regularity, our approach also allows us to treat the discrete-time case (i.e., powers of a single operator) and even more abstract semigroup representations within the same unified setting. 
	
	As an application of our results, we prove a convergence theorem for solutions to systems of parabolic equations with the aforementioned properties.
\end{abstract}

\maketitle

\section{Introduction}

The purpose of this article is to study uniform convergence to equilibrium for linear operator semigroups as time tends to infinity. For powers $T^n$ of a single operator $T$, it is well-known that this kind of long-time behaviour can characterised by spectral properties of $T$. For $C_0$-semigroups, the situation is more subtle, but has been extensively studied throughout the literature. We refer, for instance, to the classical references \cite{Neerven1996}, \cite[Chapter~V]{Engel2000}, \cite{Emelyanov2007}, \cite{Eisner2010} and \cite[Chapter 14]{Batkai2017} for more information. Still, the current state of the art leaves following issues open:

\begin{research_questions}
	\begin{enumerate}[(1)]
		\item For semigroups in the continuous time interval $[0,\infty)$, much of the known theory deals with the case of $C_0$-semigroups. This special case is very useful for many applications, for instance in the analysis of partial differential equations. On the other hand, there are important PDEs whose solution semigroup is not strongly continuous; this situation occurs, for instance, for parabolic equations with unbounded coefficients on $\bbR^d$ (see Section~\ref{section:application-coupled-parabolic-equations}). Therefore, the treatment of such examples requires a theory which is capable of efficiently handling semigroups that are not strongly continuous.
		
		\item The quest for a most cohesive and clear theory suggests that we should also seek for methods which help us to treat the discrete-time case (i.e., powers of a single operator) and the continuous-time case (i.e., semigroups indexed over the time interval $[0,\infty)$) within a unified theoretical framework. This was also a major guideline in \cite{GerlachConvPOS} and \cite{Glueck2019}.  
	\end{enumerate}
\end{research_questions}

\subsection*{Contributions}

Our answer to the issues mentioned above is as follows: inspired by the classical Jacobs--de Leeuw--Glicksberg (JdLG) theory and its success in the study of the long-time behaviour of strongly or weakly compact operator semigroups, we show how a similar idea can be used to study convergence of semigroups with respect to the operator norm. This is not a straightforward task due to the following obstacle: if $(T_s)_{s \in [0,\infty)}$ is an operator semigroup on a Banach space, even the local orbits
\begin{align*}
	\{T_s\colon s \in [0,s_0]\}
\end{align*}
will typically not be (relatively) compact with respect to the operator norm. Therefore, in order to obtain compactness -- and to thus employ typical JdLG arguments --, we restrict our attention to the behaviour of the semigroup ``at infinitely large times''. This idea leads us to what we call the \emph{semigroup at infinity}. This concept was recently developed in \cite{Glueck2019} in order to study strong convergence of semigroups; here, we adapt it to the operator norm topology -- and as it turns out, this completely resolves the issues that the local orbits of semigroups are not operator norm compact, in general. 

Here is an outline of our general strategy:
\begin{itemize}
	\item In Section~\ref{section:semigroup-representations-and-the-semigroup-at-infinity} we study general representations $(T_s)_{s \in S}$ of commutative semigroups $S$ on Banach spaces. To each such representation we assign a \emph{semigroup at infinity}. Under appropriate assumptions, this yields a splitting of the semigroup into a ``stable part'' that converges to $0$, and a ``reversible part'' that extends to a compact group. 
	
	\item In Section~\ref{section:triviality-of-compact-operator-groups} we give sufficient criteria for compact operator groups to be trivial.
	
	\item By combining the aforementioned results, we finally obtain various criteria for operator norm convergence of semigroups in Section~\ref{section:operator-norm-convergence-of-semigroups}.
\end{itemize}

The following result demonstrates what can be shown by our methods for semigroups indexed over the time interval $[0,\infty)$ without any continuity assumption. For undefined terminology about operator semigroups we refer to the beginning of Section~\ref{section:semigroup-representations-and-the-semigroup-at-infinity}. AM-spaces are a class of Banach lattices that are, for instance, described in \cite[Section~II.7]{Schaefer1974}; here we only mention that the space $C_b(X;\bbR)$ of all bounded continuous real-valued functions on a topological space $X$ is an example of an AM-space.

\begin{theorem} \label{thm:introduction}
	Let $E$ be an AM-space (over the scalar field $\bbR$), or a real-valued $L^p$-space over an arbitrary measure space for $p \in [1,\infty] \setminus \{2\}$. For each contractive operator semigroup $(T_s)_{s \in [0,\infty)}$ on $E$ the following assertions are equivalent:
	\begin{enumerate}[\upshape (i)]
		\item $T_s$ converges with respect to the operator norm to a finite rank projection as $s \to \infty$.
		\item There exists a time $s_0 \in [0,\infty)$ such that $T_{s_0}$ is quasi-compact.
	\end{enumerate}
\end{theorem}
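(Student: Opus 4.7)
The plan is to prove the two implications separately, with (ii) $\Rightarrow$ (i) being the substantive direction where the machinery of Sections~\ref{section:semigroup-representations-and-the-semigroup-at-infinity} and~\ref{section:triviality-of-compact-operator-groups} is brought to bear.

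\textbf{(i) $\Rightarrow$ (ii).} This direction is short. If $T_s \to P$ in operator norm and $P$ is a finite rank projection, then $P$ is compact, and for all sufficiently large $s$ the operator $T_s - P$ has norm strictly less than $1$. Using invariance of the essential spectral radius under compact perturbations,
\begin{align*}
    \sprEss(T_s) = \sprEss(T_s - P) \le \norm{T_s - P} < 1,
\end{align*}
which is the defining condition for $T_s$ to be quasi-compact.

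\textbf{(ii) $\Rightarrow$ (i).} First I would use the quasi-compactness of $T_{s_0}$ together with contractivity to produce operator-norm compactness of the tail of the semigroup. The Riesz decomposition for $T_{s_0}$ yields a splitting $E = E_1 \oplus E_0$ with $E_1$ finite-dimensional, the spectrum of $T_{s_0}|_{E_1}$ lying on the unit circle, and $\norm{T_{s_0}^n|_{E_0}} \to 0$. Since the semigroup is commutative, every $T_s$ commutes with the Riesz projection and therefore preserves the decomposition. Combined with $\norm{T_s} \le 1$, this gives $\norm{T_s|_{E_0}} \to 0$ in operator norm and exhibits $(T_s|_{E_1})_{s \ge 0}$ as a contractive semigroup on a finite-dimensional space, so $\{T_s : s \ge s_0\}$ is relatively compact in the operator norm topology.

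With this relative compactness in hand, the semigroup at infinity introduced in Section~\ref{section:semigroup-representations-and-the-semigroup-at-infinity} applies. I would invoke the splitting theorem there to decompose the semigroup into a stable part that tends to $0$ in operator norm and a reversible part that extends to a compact operator group $G$ on a closed invariant subspace $F \subseteq E_1$. Next I would appeal to the triviality criteria of Section~\ref{section:triviality-of-compact-operator-groups}: since $E$ is an AM-space over $\bbR$ or a real-valued $L^p$-space with $p \ne 2$, the compact group $G$ must consist only of $\id_F$. Consequently $T_s|_F \to \id_F$ in operator norm, and combining with the stable part we conclude that $T_s$ converges in operator norm to the projection $P$ onto $F$ along the stable subspace. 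Since $F \subseteq E_1$ is finite-dimensional, $P$ has finite rank.

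The main obstacle is the application of the Section~\ref{section:triviality-of-compact-operator-groups} criteria: it is there that the rigidity of isometries on AM-spaces and on $L^p$ for $p \ne 2$ (in the spirit of Banach--Lamperti) is essential, and it is precisely this step that forces the exclusion of the $L^2$ case, since on Hilbert spaces compact groups of isometries can be highly non-trivial. A secondary, more technical point is the careful bookkeeping required to feed the quasi-compactness hypothesis at a single time $s_0$, without any time regularity of the semigroup, into the abstract framework of Section~\ref{section:semigroup-representations-and-the-semigroup-at-infinity}; I would expect this to reduce to the Riesz-decomposition argument sketched above.
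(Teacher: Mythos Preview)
Your overall architecture matches the paper's: pass through the semigroup at infinity (your Riesz-decomposition argument is essentially Proposition~\ref{prop:quasi-compact}), obtain a compact group on the reversible part, and kill that group using Section~\ref{section:triviality-of-compact-operator-groups}. The (i) $\Rightarrow$ (ii) direction is fine.

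There is, however, a genuine gap in your invocation of the Section~\ref{section:triviality-of-compact-operator-groups} criteria. You write that ``since $E$ is an AM-space over $\bbR$ or a real-valued $L^p$-space with $p \ne 2$, the compact group $G$ must consist only of $\id_F$.'' But Corollary~\ref{cor:trivial-group-contractive} requires the compact group to be \emph{divisible} (equivalently, connected), and you never verify this. Compactness and contractivity on a projectively non-Hilbert space are not enough: the two-element group generated by the swap $\begin{psmallmatrix}0&1\\1&0\end{psmallmatrix}$ on $\ell^\infty(\{1,2\})$ is a compact group of contractions on an AM-space that is not trivial (cf.\ Remark~\ref{rem:continuous-vs-discrete-time}). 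This is exactly why the theorem fails for discrete time $(\bbN_0,+)$ and why the paper singles out the \emph{essential divisibility} of $([0,\infty),+)$ as the decisive algebraic input. The missing step is the compactness argument in the proof of Theorem~\ref{thm:convergence-projectively-non-Hilbert-case}: given $R \in \sgInftyON{\calT}|_{E_\infty}$ and $n \in \bbN$, one uses essential divisibility of $[0,\infty)$ to produce approximate $n$-th roots inside $\{T_s|_{E_\infty}\}$, and then extracts an actual $n$-th root by compactness of $\sgInftyON{\calT}|_{E_\infty}$. Your reference to ``Banach--Lamperti'' rigidity is a red herring here; what is actually used (via \cite[Theorem~3.11]{Glueck2016b}) is that contractive weakly almost periodic operators on projectively non-Hilbert spaces have only roots of unity as peripheral eigenvalues, and that fact only yields triviality once it is combined with divisibility through Theorem~\ref{thm:trivial-group-by-rational-spectrum}.

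A secondary point you should make explicit: the triviality criterion is applied on $E_\infty$, not on $E$, so you need $E_\infty$ to be projectively non-Hilbert. This holds because $P_\infty$ is a contractive projection (any rank-$2$ projection on $E_\infty$ with isometrically Hilbert range yields one on $E$ by composing with $P_\infty$), but it deserves a sentence.
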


Here, \emph{contractive} means that $\norm{T_s} \le 1$ for all $s \in [0,\infty)$. We prove this theorem at the end of Subsection~\ref{subsection:convergence-under-divisibility-conditions}. In Section~\ref{section:application-coupled-parabolic-equations} we show how the theorem can be applied to study the long-time behaviour of certain parabolic systems on $\bbR^d$. In the appendix we recall a few facts on poles of operator resolvents and on the behaviour of nets in metric spaces.

\begin{remark}
	We mentioned two research questions at the beginning of the introduction. Theorem~\ref{thm:introduction} demonstrates that our methods yield non-trivial answers to question~(1). But the theorem also allows for an interesting insight concerning question~(2):
	
	The powers of a two-dimensional permutation matrix show that the conclusion of the theorem fails for semigroups indexed over $\bbN_0$ rather than $[0,\infty)$. The fact that we treat general semigroup representations enables us to understand this phenomenon on a very conceptual level: it is caused by the different algebraic properties of the semigroups $([0,\infty),+)$ and $(\bbN_0,+)$ -- the first one is \emph{essentially divisible}, while the second one is not. We refer to Subsection~\ref{subsection:convergence-under-divisibility-conditions} and in particular to Remark~\ref{rem:continuous-vs-discrete-time} for details.
\end{remark}

\subsection*{Related literature}

Despite the prevalence of $C_0$-semigroups, semigroups with weaker continuity assumptions occur on many occasions in the literature. The time regularity properties one encounters vary from strong continuity on $(0,\infty)$ (see e.g.\ \cite{Arendt2016, Arendt2018} for two applications) to such concepts as bi-continuity \cite{Kuehnemund2003} and continuity on norming dual pairs \cite{Kunze2009}.

Strong convergence for semigroups that are not $C_0$ has been recently studied in the papers \cite{GerlachLB}, \cite{GerlachConvPOS} and \cite{Glueck2019}; the latter of them is closely related to the approach that we present here. One of the few classical results about operator norm convergence of semigroups that are not $C_0$ is a theorem of Lotz about quasi-compact positive semigroups on Banach lattices \cite[Theorem~4]{Lotz1986}; we generalise this result in Corollary~\ref{cor:quasi-compact-convergence-banach-lattice} below.

In \cite{Dobrick2020} the semigroup of infinity is used to investigate the long-term behaviour of semigroups associated to transport processes on infinite networks with $L^\infty$-state spaces.

\subsection*{Notation and Terminology.} 

We denote the complex unit circle by $\bbT$. All Banach spaces in this paper can be either real or complex, unless otherwise specified. To clarify whether the elements of certain function spaces are assumed to be real- or complex-valued we use notation such as $L^p(\Omega,\mu; \bbR)$ and $L^p(\Omega,\mu;\bbC)$, etc.

Let $E,F$ be Banach spaces (over the same scalar field). We endow the space $\calL(E;F)$ of bounded linear operators from $E$ to $F$ with the operator norm topology throughout; moreover, we use the abbreviation $\calL(E) \coloneqq \calL(E;E)$.
For a set $\calM \subseteq \calL(E)$ and a closed subspace $U \subseteq E$ that is invariant under all operators in $\calM$, we use the notation
\begin{align*}
	\calM|_U := \{T|_U: \, T \in \calM\} \subseteq \calL(U).
\end{align*}
The dual Banach space of $E$ will be denoted by $E'$. If the underlying scalar field is complex, the spectrum of a linear operator $A\colon E \supseteq D(A) \to E$ will be denoted by $\sigma(A)$; for $\lambda \in \bbC \setminus \sigma(A)$, the resolvent of $A$ at $\lambda$ is denoted by $\Res(\lambda,A) \coloneqq (\lambda - A)^{-1}$. Further, the point spectrum of $A$ will be denoted by $\sigma_\pnt(A)$. If the underlying scalar field of $E$ is real, the spectrum and the point spectrum of an operator $A$ are defined as the spectrum and the point spectrum of the canonical extension of $A$ to any complexification of $E$. 

Basic terminology for semigroup representations is introduced at the beginning of the next section.

\section{Semigroup representations and the semigroup at infinity} \label{section:semigroup-representations-and-the-semigroup-at-infinity}

In this section we develop a general framework to analyse whether an operator semigroup converges with respect to the operator norm as time tends to infinity. The most important situation that occurs in applications is that the semigroup contains a quasi-compact operator, and this situation will also be one of our main interests (though not our only interest). In the case of $C_0$-semigroups, a rather complete description of the long-term behaviour in the case of quasi-compactness can be found in \cite[Section~V.3]{Engel2000} (and for more general aspects of the long-term behaviour of $C_0$-semigroups we refer for instance to \cite[Chapter~V]{Engel2000} and \cite{Eisner2010}). However, as it has become apparent in the preceding sections, the case of $C_0$-semigroups is not always sufficient and, as explained in the introduction, we do not wish to develop an individual convergence theory for each different type of time regularity that might occur in applications. Thus, we stick to the other extreme and develop a single theory that does not assume any time regularity at all.

This goal being set, it is just consequent to leave the restricted setting of semigroups of the type $(T_s)_{s \in [0,\infty)}$, and to consider operator representations of general commutative semigroups $(S,+)$ instead. This allows us to also treat the time-discrete case $(T^n)_{n \in \bbN_0}$ and, for instance, the case of multi-parameter semigroups within our one theory. Moreover, it allows for some interesting theoretical observations in the spirit of \cite{GerlachConvPOS} and \cite{Glueck2019}.

Our approach is based on the celebrated Jacobs--de Leeuw--Glicksberg (JdLG) theory which applies abstract results on (semi-)topological semigroups to the more concrete situation of operator semigroups, and we combine this with the construction of a \emph{semigroup at infinity} which is inspired by \cite{Glueck2019}. In this context, we find it also worthwhile to mention that there exist other quite abstract approaches to general operator semigroups, too, that do not rely on JdLG theory (see for instance \cite{Gao2014}); however, we will stick to JdLG theory in this paper.

\subsection{Setting} \label{subsection:general-semigroup-setting}

Throughout the rest of this paper, let $(S,+)$ be a commutative semigroup with neutral element $0$ (i.e., in a more algebraic language, $(S,+)$ is a commutative monoid). We define a reflexive and transitive relation (i.e., a \emph{pre-order}) $\le$ on $S$ by setting
\begin{align*}
	s \le t \quad \text{if and only if} \quad \text{there exists } r \in S \text{ such that } t = s+r
\end{align*}
for $s,t \in S$. Note that $S$ is directed with respect to the pre-order $\le$ since we have $s,t \le s+t$ for all $s,t \in S$.

A \emph{representation} of $S$ on a Banach space $E$ is any mapping $T\colon S \to \calL(E)$ that satisfies
\begin{align*}
T(0) = \id_E \quad \text{and} \quad T(s + t) = T(s) T(t) \qquad \text{for all } t, s \in S.
\end{align*}
In the following, we will often use the index notation $T_s$ instead of $T(s)$ and call $(T_s)_{s \in S}$ an \emph{operator semigroup} on $E$. 

Let $(T_s)_{s \in S}$ be an operator semigroup on $E$, and assume that the underlying scalar field of $E$ is $\bbC$. A function $\lambda \colon S \to \bbC$ is called an \emph{eigenvalue} of $(T_s)_{s \in S}$ if there exists a non-zero vector $x \in E$ such that
\begin{align*}
T_s x = \lambda_s x \qquad \text{for all } s \in S;
\end{align*}
in this case, the vector $x$ is called a corresponding \emph{eigenvector}. Note that an eigenvalue $\lambda = (\lambda_s)_{s \in S}$ is always a representation of $(S,+)$ on the space $\bbC$. Moreover, we call an eigenvalue $\lambda = (\lambda_s)_{s \in S}$ \emph{unimodular} if $\modulus{\lambda_s} = 1$ for all $s \in S$.

An operator semigroup $(T_s)_{s \in S}$ on a Banach space $E$ is called \emph{bounded} if it satisfies $\sup_{s \in S} \norm{T_s} < \infty$. Note that, as $S$ is a directed set, every operator semigroup $(T_s)_{s \in S}$ is a net, and hence it makes sense to talk about convergence of $(T_s)_{s \in S}$. At this point we recall that, throughout the article, we always endow the operator space $\calL(E)$ with the operator norm, i.e., for us, convergence always means convergence with respect to the operator norm. In the case of a bounded operator semigroup one has the following simple characterization of convergence to the zero operator.

\begin{proposition} \label{prop:bounded-convergence}
	Let $(T_s)_{s \in S}$ be a bounded semigroup of $(S,+)$ on a Banach space $E$. The following assertions are equivalent:
	\begin{enumerate}[\upshape (i)]
		\item $\lim_{s \in S} T_s = 0$.
		\item There exists $s_0 \in S$ such that $\norm{T_{s_0}} < 1$.
		\item $0$ is contained in the closure of the set $\{T_s\colon s \in S\}$.
	\end{enumerate}
\end{proposition}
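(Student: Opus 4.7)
The plan is to establish the cycle (ii)$\Rightarrow$(i)$\Rightarrow$(iii)$\Rightarrow$(ii). The implications (i)$\Rightarrow$(iii) and (iii)$\Rightarrow$(ii) are essentially immediate: if the net $(T_s)_{s \in S}$ converges to $0$, then every operator norm neighborhood of $0$ contains some $T_s$, so $0$ lies in the closure of the range $\{T_s : s \in S\}$; conversely, applying the definition of closure to the open unit ball centered at $0$ produces an index $s_0 \in S$ with $\norm{T_{s_0}} < 1$.

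The substantive step is (ii)$\Rightarrow$(i). Set $M \coloneqq \sup_{s \in S} \norm{T_s}$, which is finite by the boundedness assumption, and write $q \coloneqq \norm{T_{s_0}} < 1$. For each $n \in \bbN$, let $n \argument s_0$ denote the $n$-fold sum $s_0 + \dots + s_0$ inside $S$; by the semigroup property, $T_{n s_0} = T_{s_0}^n$ and hence $\norm{T_{n s_0}} \le q^n$. The crucial point, which uses the commutativity of $(S,+)$ together with the pre-order on $S$, is that every index $s \in S$ with $s \ge n s_0$ splits as $s = n s_0 + r$ for some $r \in S$, so that
\[
\norm{T_s} = \norm{T_{n s_0} T_r} \le q^n M.
\]
Given $\epsilon > 0$, choose $n$ so large that $q^n M < \epsilon$; then $\norm{T_s} < \epsilon$ for every $s \ge n s_0$, which is exactly the statement that the net $(T_s)_{s \in S}$ converges to $0$ in operator norm.

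I do not expect a genuine obstacle here; the only thing that could go wrong is if one forgot that the index set is a directed pre-ordered set rather than $\bbN_0$, in which case one has to verify that the iterates $n s_0$ really are cofinal in $S$ and that the tail factor $T_r$ is handled by the global bound $M$. Both facts follow cleanly from commutativity and boundedness, so the argument goes through in the stated generality without further work.
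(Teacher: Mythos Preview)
Your proof is correct and follows essentially the same approach as the paper, just traversing the cycle in a different order: the paper shows (i)$\Rightarrow$(ii)$\Rightarrow$(iii)$\Rightarrow$(i), whereas you show (ii)$\Rightarrow$(i)$\Rightarrow$(iii)$\Rightarrow$(ii), and your step (ii)$\Rightarrow$(i) is simply the composition of the paper's (ii)$\Rightarrow$(iii) and (iii)$\Rightarrow$(i). One small quibble: your closing remark that the iterates $ns_0$ are ``cofinal in $S$'' is not true in general (take $S=[0,\infty)^2$ and $s_0=(1,0)$), but this is harmless because your argument never actually uses cofinality---it only needs that $ns_0$ is a valid index and that every $s\ge ns_0$ factors as $ns_0+r$, which is precisely the definition of the pre-order.
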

\begin{proof}
	(i) $\Rightarrow$ (ii): Obvious.
	
	(ii)  $\Rightarrow$ (iii): Let $s_0 \in S$ such that $\norm{T_{s_0}} < 1$. Let $\varepsilon > 0$. Then there exists $n \in \bbN$ such that $\norm{T_{s_0}}^n < \varepsilon$. Hence, 
	\begin{align*}
		\norm{T_{n s_0}} \leq \norm{T_{s_0}}^n < \varepsilon.
	\end{align*}
	Therefore, $0 \in \overline{\{T_s\colon s \in S\}}$.
	
	(iii) $\Rightarrow$ (i): Let $\varepsilon > 0$. Then there exists $s_0 \in S$ such that $\norm{T_{s_0}} \leq \varepsilon$. Thus,
	\begin{align*}
		\norm{T_t} \leq \varepsilon M \qquad \text{for all } t \in s_0 + S,
	\end{align*}
	where $M \coloneqq \sup_{s \in S} \norm{T_s}$. So it follows that $\lim_{s \in S} T_s = 0$. 
\end{proof}

\subsection{The semigroup at infinity}

In \cite[Section 2]{Glueck2019} the concept of the \emph{semigroup at infinity} with respect to the strong operator topology was used to study strong convergence of operator semigroups. In reminiscence of this concept we define the semigroup at infinity now with respect to the operator norm topology.

\begin{definition}
	Let $(T_s)_{s \in S}$ be a semigroup of $(S,+)$ on a Banach space $E$. We call the set
	\begin{align*}
		\sgInftyON{\calT} \coloneqq \bigcap_{r \in S} \overline{\{T_s\colon s \ge r\}}
	\end{align*}
	the \emph{semigroup at infinity} associated with $(T_s)_{s \in S}$ with respect to the operator norm. Since we restrict ourselves to the operator norm topology throughout the paper and since we only consider a single operator semigroup, we will often just call $\sgInftyON{\calT}$ the \emph{semigroup at infinity}.
\end{definition}

Note that the semigroup at infinity consists of all cluster points (with respect to the operator norm) of the net $(T_s)_{s \in S}$.

If the semigroup at infinity, $\sgInftyON{\calT}$, is non-empty and compact, then one can apply the \emph{Jacobs--de Leeuw--Glicksberg} theory to the topological semigroup $\sgInftyON{\calT}$. This yields a smallest non-empty closed ideal $\calI$ in $\sgInftyON{\calT}$ (where \emph{ideal} means that $T \calI \subseteq \calI$ for all $T \in \sgInftyON{\calT}$), and the ideal $\calI$ -- the so-called \emph{Sushkevich kernel} of $\sgInftyON{\calT}$ -- is a compact topological group with respect to operator multiplication. For details we refer for instance to \cite[Section~16.1]{Eisner2015} or to \cite[Theorem~V.2.3]{Engel2000}.

Denote the neutral element in $\calI$ by $P_\infty$ -- it is a projection in $\calL(E)$ which we call the \emph{projection at infinity}; the range of $P_\infty$ is denoted by $E_\infty$. \bigskip

Note that the ``semigroup at infinity'' approach differs from classical applications of JdLG theory to semigroup asymptotics in the following way: classically, one would rather try to apply the JdLG-decomposition to the semigroup 
\begin{align*}
\calT \coloneqq \overline{\{T_s\colon s \in S\}}.
\end{align*}
To make this approach work though, we would need a global compactness requirement of the semigroup $(T_s)_{s \in S}$, in the sense that $\calT$ is compact with respect to the operator norm topology. Generally, this is a far too strong assumption if one is interested in characterising the convergence of $(T_s)_{s \in S}$; this can already be seen by considering the following simple example.

\begin{example}
	Consider the nilpotent right shift $(T_s)_{s \in [0, \infty)}$ on $L^\infty(0, 1)$, i.e.,
	\begin{align*}
	(T_s f)(t) = 
	\begin{cases}
	f(t - s), &\quad \text{if } s < t, \\
	0, &\quad \text{else},
	\end{cases} \qquad (f \in L^\infty(0, 1)). 
	\end{align*}
	Then $T_s$ converges to the zero operator with respect to the operator norm as $s \to \infty$, but $\{T_s\colon s \geq 0\}$ is not even relatively compact in the strong operator topology.

	If we replace $L^\infty(0,1)$ with $L^p(0,1)$ for $p \in [1,\infty)$, the set $\{T_s\colon s \geq 0\}$ becomes relatively compact with respect to the strong operator topology, but it is still not relatively compact with respect to the operator norm topology.
\end{example}

To overcome this obstacle, in the following theorem we will apply the JdLG-decomposition to the semigroup at infinity. This result is very close in spirit to a similar theorem for the strong operator topology that can be found in \cite[Theorem~2.2]{Glueck2019}. 

\begin{theorem} \label{thm:JdLG-semigroup-infinity}
	Let $(T_s)_{s \in S}$ be a bounded semigroup of $(S,+)$ on a Banach space $E$ and assume that the semigroup at infinity, $\sgInftyON{\calT}$, is non-empty and compact. Set $\calT \coloneqq \overline{\{T_s\colon s \in S\}} \subseteq \calL(E)$. Then the following assertions hold:
	\begin{enumerate}[\upshape (a)]
		\item The projection at infinity, $P_\infty$, commutes with all operators in $\calT$, and $\calT P_\infty = \sgInftyON{\calT} P_\infty$.
		
		\item The semigroup at infinity, $\sgInftyON{\calT}$, is a group with respect to operator multiplication with neutral element $P_\infty$. Moreover, we have
			\begin{align*}
				\calT|_{E_\infty} = \sgInftyON{\calT}|_{E_\infty} = \overline{\{T_s \colon s \in S\}|_{E_\infty}}^{\calL(E_\infty)},
			\end{align*}
			and this set is a compact subgroup of the bijective operators in $\calL(E_\infty)$. Finally, $\sgInftyON{\calT}$ and $\sgInftyON{\calT}|_{E_\infty}$ are isomorphic (in the category of topological groups) via the mapping $R \mapsto R|_{E_\infty}$.
		
		\item We have $\lim_{s \in S} T_s|_{\ker P_\infty} = 0$ with respect to the operator norm on $\calL(\ker P_\infty)$. 
		
		\item For every vector $x \in E$ the following assertions are equivalent:
			\begin{enumerate}[\upshape (i)]
				\item $P_\infty x = 0$.
				\item $0$ is contained in the weak closure of the orbit $\{T_s x \mid \, s \in S\}$.
				\item The net $(T_s x)_{s \in S}$ norm converges to $0$ in $E$.
				\item We have $Rx = 0$ for each $R \in \sgInftyON{\calT}$.
				\item We have $Rx = 0$ for at least one $R \in \sgInftyON{\calT}$.
			\end{enumerate}
		\item If the underlying scalar field of $E$ is complex, then the semigroup $(T_s)_{s \in S}$ has discrete spectrum, i.e, 
		\begin{align*}
		E_\infty = \overline{\lin} \{x \in E\colon \forall \, s \in S \ \exists\, \lambda_s \in \bbT \text{ with } T_s x = \lambda_s x\}.
		\end{align*}		 
	\end{enumerate}
\end{theorem}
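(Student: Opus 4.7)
The plan is to prove the five parts in the order (a), (c), (b), (d), (e), because each depends on the previous ones. For (a), the pairwise commutativity of $(T_s)_{s \in S}$ together with joint continuity of operator multiplication on norm-bounded sets extends to commutativity of the full closure $\calT$; in particular $P_\infty \in \calT$ commutes with every $T \in \calT$. Since $\sgInftyON{\calT} \subseteq \calT$, only the inclusion $\calT P_\infty \subseteq \sgInftyON{\calT} P_\infty$ requires work, and it suffices to show $T P_\infty \in \sgInftyON{\calT}$ for every $T \in \calT$. Approximating $T$ by a net $T_{s_\alpha}$ and $P_\infty$ by a net $T_{t_\beta}$ in operator norm, the products $T_{s_\alpha} T_{t_\beta} = T_{s_\alpha + t_\beta}$ lie in arbitrarily deep tails of $S$; passing to the limit first in $\beta$ and then in $\alpha$, one obtains $T P_\infty$ as a limit of elements in every closed tail $\overline{\{T_u : u \ge r\}}$, hence $T P_\infty \in \sgInftyON{\calT}$.

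For (c), the commutation from (a) makes $\ker P_\infty$ invariant under each $T_s$, so $(T_s|_{\ker P_\infty})$ is a bounded operator semigroup on $\ker P_\infty$; since $P_\infty \in \sgInftyON{\calT}$ and $P_\infty|_{\ker P_\infty} = 0$, the zero operator is a cluster point of this restricted net, and Proposition~\ref{prop:bounded-convergence} then gives $T_s|_{\ker P_\infty} \to 0$ in operator norm. This immediately drives (b): every $R \in \sgInftyON{\calT}$, being itself a cluster point of $(T_s)$, must vanish on $\ker P_\infty$, hence $R = R P_\infty$. Therefore $\sgInftyON{\calT} \subseteq \sgInftyON{\calT} P_\infty = \calI$, and since the reverse inclusion is trivial, $\sgInftyON{\calT}$ coincides with the Sushkevich kernel $\calI$ and is thus a compact topological group with neutral element $P_\infty$. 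The identity $R = R P_\infty$ makes $R \mapsto R|_{E_\infty}$ a continuous injective group homomorphism, hence a topological-group isomorphism onto its image; the equality $\calT|_{E_\infty} = \sgInftyON{\calT}|_{E_\infty}$ follows from (a) via $T|_{E_\infty} = (T P_\infty)|_{E_\infty}$ with $T P_\infty \in \sgInftyON{\calT}$, and its coincidence with $\overline{\{T_s : s \in S\}|_{E_\infty}}^{\calL(E_\infty)}$ follows from the compactness of $\sgInftyON{\calT}|_{E_\infty}$.

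For (d) I would close the chain (i) $\Rightarrow$ (iii) $\Rightarrow$ (ii) $\Rightarrow$ (v) $\Rightarrow$ (i), alongside the easy (i) $\Rightarrow$ (iv) $\Rightarrow$ (v) using $R = R P_\infty$: (i) $\Rightarrow$ (iii) is (c) applied to $x \in \ker P_\infty$, (iii) $\Rightarrow$ (ii) is trivial, and (v) $\Rightarrow$ (i) follows by applying the group inverse of $R$ from (b). For (ii) $\Rightarrow$ (v), pick a net $T_{s_\alpha} x \to 0$ weakly; by compactness of $\sgInftyON{\calT}$ the net $(T_{s_\alpha} P_\infty)$ admits a subnet converging in operator norm to some $R \in \sgInftyON{\calT}$, and since $T_{s_\alpha} P_\infty x = T_{s_\alpha} x - T_{s_\alpha}(\id - P_\infty) x$ converges in norm to $Rx$ and in the weak topology to $0$ (the second term tending to $0$ in norm by (c)), one gets $Rx = 0$. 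For (e), by (b) the group $\sgInftyON{\calT}|_{E_\infty}$ is a compact abelian topological group of invertible operators on the complex Banach space $E_\infty$, and the representation-theoretic part of JdLG theory decomposes $E_\infty$ into the closed linear span of common eigenspaces with unimodular characters; pulling these back along $s \mapsto T_s|_{E_\infty}$ produces the desired unimodular eigenvalues of the original semigroup. The main obstacle is precisely the bootstrap from (c) to (b): a priori the Sushkevich kernel $\calI$ could be properly contained in $\sgInftyON{\calT}$, and it is the identity $R = R P_\infty$ -- forced by the operator-norm convergence in (c) -- that collapses the semigroup at infinity onto its kernel and upgrades it into a group.
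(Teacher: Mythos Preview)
Your proposal is correct and follows essentially the same route as the paper: the same order (a), (c), (b), (d), (e), and in particular the same key bootstrap from (c) to (b) via the identity $R = R P_\infty$ that collapses $\sgInftyON{\calT}$ onto its Sushkevich kernel. The one omission is in (e): your sketch only yields $E_\infty \subseteq \overline{\lin}\{x \in E : T_s x = \lambda_s x,\ \lambda_s \in \bbT\}$, whereas the stated equality also needs that every unimodular eigenvector lies in $E_\infty$; the paper closes this by noting that for such $x$ the vector $y = (\id - P_\infty)x$ satisfies both $T_s y = \lambda_s y$ and $T_s y \to 0$ by (c), forcing $y = 0$.
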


Note that the first part of assertion~(a) implies that every operator in $\calT$ -- and thus in particular every operator $T_s$ -- leaves $E_\infty$ and $\ker P_\infty$ invariant.

\begin{proof}[Proof of Theorem~\ref{thm:JdLG-semigroup-infinity}]
	(a) The first assertion is clear since $\calT$ is commutative. Moreover, we have $\calT \sgInftyON{\calT} \subseteq \sgInftyON{\calT} \subseteq \calT$, where the second inclusion is obvious and the first inclusion follows easily from the definitions of $\calT$ and $\calT_\infty$. Therefore,
	\begin{align*}
		\calT P_\infty = \calT P_\infty P_\infty \subseteq \sgInftyON{\calT} P_\infty \subseteq \calT P_\infty.
	\end{align*}
	
	(c) Since $P_\infty$ is trivial on $\ker P_\infty$, we have $0 \in \overline{\{T_s|_{\ker P_\infty} \colon s \in S\}}$; this is equivalent to $\lim_{s \in S} T_s|_{\ker P_\infty} = 0$ by Proposition~\ref{prop:bounded-convergence}. 
	
	(b) Let $\calI \subseteq \sgInftyON{\calT}$ denote the Sushkevich kernel of $\sgInftyON{\calT}$, i.e., the smallest non-empty closed ideal in the semigroup $\sgInftyON{\calT}$ (see the discussion before the theorem). We show that $\sgInftyON{\calT} = \calI$. To this end, let $R \in \sgInftyON{\calT}$. Then $R$ is a cluster point of the net $(T_s)_{s \in S}$, so there exists a subnet $(T_{s_j})_j$ that converges to $R$. It follows from assertion~(c), which we have already proved, that $T_{s_j}(\id_E - P_\infty) \to 0$, so $R(\id_E - P_\infty) = 0$ and hence, $R = RP_\infty$. Since $P_\infty \in \calI$ and since $\calI$ is an ideal in $\sgInftyON{\calT}$ we conclude that $R \in \calI$. We have thus proved that $\sgInftyON{\calT}$ is a group with respect to operator multiplication and that its neutral element is $P_\infty$.
		
	Next we show the equalities in the displayed formula. One has $\calT|_{E_\infty} = \sgInftyON{\calT}|_{E_\infty}$ by (a). As the restriction map from $\calL(E)$ to $\calL(E_\infty; E)$ is continuous, we have $\calT|_{E_\infty} \subseteq \overline{\{T_s \colon s \in S\}|_{E_\infty}}$. The converse inclusion follows from $\overline{\{T_s \colon s \in S\}|_{E_\infty}} P_\infty \subseteq \calT$.
	
	Since $\sgInftyON{\calT}$ is a group with neutral element $P_\infty$, it readily follows that $\sgInftyON{\calT}|_{E_\infty}$ is a subgroup of the bijective operators in $\calL(E_\infty)$. The mapping
	\begin{align*}
		\sgInftyON{\calT} \ni R \mapsto R|_{E_\infty} \in \sgInftyON{\calT}|_{E_\infty}
	\end{align*}
	is clearly a surjective and continuous group homomorphism and consequently, $\sgInftyON{\calT}|_{E_\infty}$ is compact. If $R|_{E_\infty} = \id_{E_\infty}$ for some $R\in \sgInftyON{\calT}$, then $P_\infty = RP_\infty = R$, so our group homomorphism is also injective. Finally, it is also a homeomorphism by the compactness of its domain and range.
	
	(d) Fix $x \in E$. 
	
	(iv) $\Rightarrow$ (i) $\Rightarrow$ (v) $\Rightarrow$ (iii) $\Rightarrow$ (iv):
	Clearly, since $P_\infty \in \sgInftyON{\calT}$, (iv) implies (i) and (i) implies (v). Furthermore, (v) implies $0 \in \overline{ \{T_s x \mid s \in S\} }$ which is equivalent to $\lim_{s \in S} T_s x = 0$, i.e., (iii), due to the boundedness of the semigroup. Moreover, if (iii) holds and $\varepsilon > 0$ is fixed, then there exists $s \in S$ such that $\{T_t x \mid t \geq s\} \subseteq \varepsilon \Ball$, where $\Ball$ denotes the closed unit ball in $E$. Thus, $\sgInftyON{\calT} x \subseteq \varepsilon\Ball$. Since $\varepsilon > 0$ was arbitrary, it follows that $\sgInftyON{\calT} x = \{0\}$, i.e., (iv) holds. 
	
	(ii) $\Leftrightarrow$ (iii):
	Obviously, (iii) implies (ii). Conversely, suppose that (ii) holds. Then it follows that $0$ is contained in the weak closure of the set $\{T_s P_\infty x \mid s \in S\}$. Moreover, it follows from~(a) that the set $\{T_s P_\infty \mid s \in S\}$ is a subset of $\sgInftyON{\calT} P_\infty$ and thus relatively compact in $\calL(E)$. Hence, $\{T_s P_\infty x \mid s \in S\}$ is relatively strongly compact and thus its closure coincides with its weak closure. Hence, $0$ is contained in the strong closure of $\{T_s P_\infty x \mid s \in S\}$, so $T_s P_\infty x \to 0$ due to the boundedness of the semigroup. If we apply the implication from~(iii) to~(i), which we have already shown, to the vector $P_\infty x$, this yields $P_\infty x = P_\infty (P_\infty x) = 0$.
	
	(e) Recall that, by (b), $\calG \coloneqq \overline{\{T_s \colon s \in S\}|_{E_\infty}} \subseteq \calL(E_\infty)$ is a compact group with respect to the operator norm on $\calL(E_\infty)$. Let $\calG^*$ denote the dual group of $\calG$. According to \cite[Corollary~15.18]{Eisner2015} we have
	\begin{align*}
		E_\infty = \, & \overline{\lin} \{x \in E_\infty\colon \exists \, \xi \in \calG^* \; \forall \, R \in \calG\colon Rx = \xi(R)x\} \\
		\subseteq \, & \overline{\lin} \{x \in E_\infty\colon \forall \, s \in S \ \exists\, \lambda_s \in \bbT\colon T_s x = \lambda_s x\} \subseteq E_\infty.
	\end{align*}
	Now let $x \in E$ be an eigenvector associated to the unimodular eigenvalue $\lambda = (\lambda_s)_{s \in S}$. Consider $y \coloneqq (I - P_\infty) x \in \ker P_\infty$. Then $T_s y \to 0$ and $T_s y = \lambda_s y$ for each $s \in S$. Since $\modulus{\lambda_s} = 1$ for all $s \in S$, this implies $y = 0$, i.e., $x \in E_\infty$.
\end{proof}

\begin{remark} \label{rem:sg-at-infty-is-also-a-group-in-the-strong-case}
	\begin{enumerate}[\upshape (a)]
		\item For the strong operator topology, the analogue result to Theorem~\ref{thm:JdLG-semigroup-infinity} is \cite[Theorem~2.2]{Glueck2019}. The assertion that the semigroup at infinity is automatically a group in case that it is non-empty and compact is not included in this reference, but it is also true in the situation there; this can be shown by exactly the same argument as in our proof of Theorem~\ref{thm:JdLG-semigroup-infinity}(b). This shows that the semigroup at infinity is minimal in the sense that there is no smaller topological group that contains all the information about the asymptotic behaviour of the semigroup.

		\item For the strong operator topology, the statement in Theorem~\ref{thm:JdLG-semigroup-infinity}(e) holds, too, although that was not observed in \cite[Theorem~2.2]{Glueck2019}. 
	\end{enumerate}
\end{remark}

As a consequence of the above theorem, operator norm convergence of a semigroup can be characterised in terms of its semigroup at infinity. Let us state this explicitly in the following corollary.

\begin{corollary} \label{cor:characterization-of-sg-convergence}
	For every bounded semigroup $(T_s)_{s \in S}$ on a Banach space $E$ the following assertions are equivalent:
	\begin{enumerate}[\upshape (i)]
		\item $(T_s)_{s \in S}$ converges (with respect to the operator norm).
		\item $\sgInftyON{\calT}$ is a singleton. 
		\item $\sgInftyON{\calT}$ is non-empty and compact, and acts as the identity on $E_\infty$.
		\item $\sgInftyON{\calT}$ is non-empty and compact, and $(T_s)_{s \in S}$ acts as the identity on $E_\infty$.
	\end{enumerate}
	If the equivalent conditions~\textup{(i)--(iv)} are satisfied, then $\lim_{s \in S}T_s$ equals $P_\infty$, the projection at infinity.
	
	If the underlying scalar field of $E$ is complex, the above assertions~ \textup{(i)--(iv)} are also equivalent to:
	\begin{enumerate}[\upshape (v)]
		\item $\sgInftyON{\calT}$ is non-empty and compact, and $\one \coloneqq (1)_{s \in S}$ is the only unimodular eigenvalue of $(T_s)_{s \in S}$.
	\end{enumerate}
\end{corollary}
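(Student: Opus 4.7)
The plan is to chain the equivalences by exploiting Theorem~\ref{thm:JdLG-semigroup-infinity}, which already encodes all the structural information required; the work reduces to translating between ``operator norm convergence of the net $(T_s)_{s \in S}$'', ``$\sgInftyON{\calT}$ is trivial'', and, in the complex case, ``the unimodular eigenvalue structure is trivial''.

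For the cycle (i)~$\Rightarrow$~(ii)~$\Rightarrow$~(iii)~$\Rightarrow$~(iv)~$\Rightarrow$~(i), I would argue as follows. For (i)~$\Rightarrow$~(ii), recall that $\sgInftyON{\calT}$ coincides with the set of cluster points of the net $(T_s)_{s \in S}$, so if this net converges in operator norm, its unique cluster point is the limit. For (ii)~$\Rightarrow$~(iii), a singleton is automatically non-empty and compact, hence Theorem~\ref{thm:JdLG-semigroup-infinity}(b) applies and guarantees that $\sgInftyON{\calT}$ is a group with neutral element $P_\infty$; a singleton group consists of its neutral element, so $\sgInftyON{\calT} = \{P_\infty\}$, which acts as the identity on $E_\infty$ by definition of the latter. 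For (iii)~$\Rightarrow$~(iv), I invoke the identity $\calT|_{E_\infty} = \sgInftyON{\calT}|_{E_\infty}$ from Theorem~\ref{thm:JdLG-semigroup-infinity}(b): if every element of the right-hand side restricts to $\id_{E_\infty}$, then so does every $T_s$. For (iv)~$\Rightarrow$~(i), the hypothesis combined with $E_\infty = P_\infty E$ yields $T_s P_\infty = P_\infty$, while Theorem~\ref{thm:JdLG-semigroup-infinity}(c) supplies $\norm{T_s(\id_E - P_\infty)} \to 0$; adding these gives
\begin{align*}
	T_s = T_s P_\infty + T_s(\id_E - P_\infty) = P_\infty + T_s(\id_E - P_\infty) \longrightarrow P_\infty
\end{align*}
in operator norm, which simultaneously establishes the ``moreover'' clause.

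In the complex case it remains to weave (v) into the cycle, and I would show (iv)~$\Leftrightarrow$~(v). For (iv)~$\Rightarrow$~(v), let $\lambda$ be a unimodular eigenvalue with non-zero eigenvector $x$; then $y \coloneqq (\id_E - P_\infty)x \in \ker P_\infty$ is again an eigenvector for $\lambda$, so $\norm{T_s y} = \norm{y}$ for every $s$, which combined with Theorem~\ref{thm:JdLG-semigroup-infinity}(c) forces $y = 0$, i.e.\ $x \in E_\infty$; by (iv) we then have $T_s x = x$, so $\lambda_s = 1$. For (v)~$\Rightarrow$~(iv), Theorem~\ref{thm:JdLG-semigroup-infinity}(e) realises $E_\infty$ as the closed linear span of unimodular eigenvectors, and by (v) each such eigenvalue is $\one$, so $T_s$ fixes a total subset of $E_\infty$ pointwise and therefore $T_s|_{E_\infty} = \id_{E_\infty}$ by continuity.

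I do not expect any substantive obstacle. The only mildly delicate point is in (ii)~$\Rightarrow$~(iii), where one must remember that the singleton hypothesis already provides non-emptiness and compactness, so that the group structure of Theorem~\ref{thm:JdLG-semigroup-infinity}(b) kicks in automatically and pins down the singleton as $\{P_\infty\}$; everything else is routine bookkeeping with the splitting $E = E_\infty \oplus \ker P_\infty$ and the three identities from Theorem~\ref{thm:JdLG-semigroup-infinity}(b),~(c),~(e).
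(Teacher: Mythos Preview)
Your proof is correct and follows essentially the same route as the paper's: the same cycle (i)~$\Rightarrow$~(ii)~$\Rightarrow$~(iii)~$\Rightarrow$~(iv)~$\Rightarrow$~(i) using the cluster-point description of $\sgInftyON{\calT}$, the group structure from Theorem~\ref{thm:JdLG-semigroup-infinity}(b), the identity $\calT|_{E_\infty}=\sgInftyON{\calT}|_{E_\infty}$, and the stability on $\ker P_\infty$ from~(c); for (iv)~$\Leftrightarrow$~(v) the paper simply cites Theorem~\ref{thm:JdLG-semigroup-infinity}(e), whereas you unpack the direction (iv)~$\Rightarrow$~(v) by hand (which is exactly the argument contained in the proof of~(e)). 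One tiny wording quibble: in (iv)~$\Rightarrow$~(v) the vector $y=(\id_E-P_\infty)x$ need not be an eigenvector (it could be zero), but since $T_s y=\lambda_s y$ holds regardless, your conclusion $\norm{T_s y}=\norm{y}$ and hence $y=0$ is unaffected.
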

\begin{proof}
	(i) $\Rightarrow$ (ii): If the net $(T_s)_{s \in S}$ converges, then its limit is the only cluster point of $(T_s)_{s \in S}$. Hence, $\sgInftyON{\calT}$ is a singleton.
	
	(ii) $\Rightarrow$ (iii): Assertion~(ii) implies $\sgInftyON{\calT} = \{P_\infty\}$, and $P_\infty$ acts trivially on $E_\infty$.
	
	(iii) $\Rightarrow$ (iv): By Theorem~\ref{thm:JdLG-semigroup-infinity}(a) we have $\calT|_{E_\infty} = \sgInftyON{\calT}|_{E_\infty}$, so~(iii) implies~(iv).
	
	(iv) $\Rightarrow$ (i): By Theorem~\ref{thm:JdLG-semigroup-infinity}(c), assertion~(iv) implies that $\lim_{s \in S} T_s = P_\infty$.
	
	(iv) $\Leftrightarrow$ (v): By Theorem~\ref{thm:JdLG-semigroup-infinity}(e), $(T_s)_{s \in S}$ acts as the identity on $E_\infty$ if and only if $\one \coloneqq (1)_{s \in S}$ is the only unimodular eigenvalue of $(T_s)_{s \in S}$.
\end{proof}

\begin{remark} \label{rem:embedded-semigroup-and-strong-op-topology}
	We note once again that our results in this subsection, as well as their proofs, are quite close to similar results for the strong operator topology from \cite[Subsection~2.2]{Glueck2019}. The relation between the semigroups at infinity with respect to the operator norm topology and with respect to the strong operator topology can also be formalised in the following sense.
	
	If $(T_s)_{s \in S}$ is an operator semigroup on a Banach space $E$ one can, for each $s \in S$, define an operator $R_s$ on the Banach space $\calL(E)$ by
	\begin{align*}
		R_s\colon \calL(E) \to \calL(E), \quad A \mapsto T_s A.
	\end{align*}
	Then $(R_s)_{s \in S}$ is a bounded semigroup on the Banach space $\calL(E)$, and topological properties of $(R_s)_{s \in S}$ with respect to the strong operator topology translate into topological properties of $(T_s)_{s \in S}$ with respect to the operator norm. This observation can be used as a basis to derive the theory of the semigroup at infinity with respect to the operator norm from the corresponding theory with respect to the strong topology presented in \cite{Glueck2019}.
	
	However, in the present section we prefer to give more direct proofs in order to make our work more self-contained and to improve its accessibility for readers not familiar with \cite{Glueck2019}.
\end{remark}

In order to apply Theorem~\ref{thm:JdLG-semigroup-infinity} and Corollary~\ref{cor:characterization-of-sg-convergence} one needs criteria to ensure that the semigroup at infinity is non-empty and compact; in a general setting, such criteria can be found in the following proposition.

\begin{proposition} \label{prop:characterisation-of-compact-and-non-empty}
	For every bounded semigroup $(T_s)_{s \in S}$ on a Banach space $E$, the following assertions are equivalent:
	\begin{enumerate}[\upshape (i)]
		\item The semigroup at infinity is non-empty and compact.
		\item Every subnet of $(T_s)_{s\in S}$ has a convergent subnet.
		\item Every universal subnet of $(T_s)_{s \in S}$ converges.
	\end{enumerate}
	In case that $S$ contains a cofinal sequence, the above assertions~\textup{(i)--(iii)} are also equivalent to:
	\begin{enumerate}[\upshape (iv)]
		\item For every cofinal sequence $(s_n)_{n \in \bbN}$ in $S$, the sequence $(T_{s_n})_{n \in \bbN}$ has a convergent subsequence.
	\end{enumerate}
\end{proposition}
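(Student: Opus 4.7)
The plan is to establish the cycle $(i) \Rightarrow (ii) \Rightarrow (i)$ together with $(ii) \Leftrightarrow (iii)$, and then adapt the main argument to obtain the sequential characterisation $(iv)$. The equivalence $(ii) \Leftrightarrow (iii)$ is purely topological: every net admits a universal subnet, and a universal net in a Hausdorff space converges if and only if it has a cluster point. Hence $(ii)$ forces every universal subnet to have a convergent subnet and thus to converge, while $(iii)$ applied to a universal sub-subnet of any given subnet delivers the convergent sub-subnet required by $(ii)$. For $(i) \Rightarrow (ii)$, I would invoke Theorem~\ref{thm:JdLG-semigroup-infinity}: the invariant splitting $E = E_\infty \oplus \ker P_\infty$ arising from the commutation of $P_\infty$ with every $T_s$ yields $T_s = T_s P_\infty + T_s(\id_E - P_\infty)$; part~(b) places the restriction $T_s|_{E_\infty}$ in the compact group $\sgInftyON{\calT}|_{E_\infty}$, and part~(c) gives $T_s|_{\ker P_\infty} \to 0$ in operator norm. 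Along any subnet one first extracts a sub-subnet convergent on $E_\infty$; convergence on $\ker P_\infty$ is then automatic, and continuity of $P_\infty$ should combine these into operator-norm convergence on all of $E$.

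The main obstacle is $(ii) \Rightarrow (i)$, since the hypothesis only controls subnets of $(T_s)$ itself whereas the conclusion asserts compactness of the set of its cluster points. My approach is a diagonal construction. Non-emptiness of $\sgInftyON{\calT}$ is immediate from $(ii)$ applied to $(T_s)$. For compactness, given any net $(R_j)_{j \in J}$ in $\sgInftyON{\calT}$, I would equip $J \times S \times \bbN$ with the componentwise order and, using that each $R_j$ is a cluster point of $(T_s)$, select $\sigma(j,r,n) \geq r$ satisfying $\|T_{\sigma(j,r,n)} - R_j\| < 1/n$. The $S$-component ensures that $(T_{\sigma(j,r,n)})_{(j,r,n)}$ is a subnet of $(T_s)$, so $(ii)$ supplies a convergent sub-subnet whose limit $T^*$ automatically lies in $\sgInftyON{\calT}$. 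Cofinality in the $\bbN$-component forces $1/n \to 0$ along this sub-subnet, so the matching sub-subnet of $(R_j)$ also converges to $T^*$; as $\sgInftyON{\calT}$ is closed by definition, this yields compactness.

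When $S$ admits a cofinal sequence, $(i) \Rightarrow (iv)$ is immediate from $(i) \Rightarrow (ii)$ combined with the metric-space fact (recorded in the appendix) that a sequence with a convergent subnet has a convergent subsequence. For $(iv) \Rightarrow (i)$, I would adapt the diagonal argument to the sequential setting: given $(R_n)$ in $\sgInftyON{\calT}$ and a fixed cofinal sequence $(r_n)$ in $S$, choose $s_n \geq r_n$ with $\|T_{s_n} - R_n\| < 1/n$; cofinality of $(s_n)$ lets $(iv)$ produce a convergent subsequence of $(T_{s_n})$, and the estimate transports convergence to the corresponding subsequence of $(R_n)$, yielding sequential (and hence, in the metric space $\calL(E)$, full) compactness of the closed set $\sgInftyON{\calT}$.
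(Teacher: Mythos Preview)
Your argument for $(i) \Rightarrow (ii)$ via the splitting $T_s = T_s P_\infty + T_s(\id_E - P_\infty)$ coincides with the paper's: the paper phrases it as $T_s P_\infty \in \calT P_\infty = \sgInftyON{\calT} P_\infty$ (from Theorem~\ref{thm:JdLG-semigroup-infinity}(a)) rather than via restrictions, but this is the same idea. Likewise, $(ii) \Leftrightarrow (iii)$ is handled identically as a general topological fact.

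Where you diverge is in the remaining implications. The paper does not carry out the diagonal constructions you propose for $(ii) \Rightarrow (i)$ and $(iv) \Rightarrow (i)$; instead it invokes Lemma~\ref{lemma:set-of-cluster-points}, a general statement about cluster-point sets of nets in metric spaces whose proof is outsourced to another reference. Your diagonal arguments are correct and make the proposition self-contained; the paper's route is shorter on the page but leans on an external black box. Both are legitimate.

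One small point to watch in your $(iv) \Rightarrow (i)$ argument: you should choose the fixed cofinal sequence $(r_n)$ to be \emph{increasing} (which is always possible in a directed set admitting a cofinal sequence). Without this, a subsequence $(r_{n_k})$ need not be cofinal, and you would not be able to conclude that the limit of $(T_{s_{n_k}})$ lies in $\sgInftyON{\calT}$---which you also need for non-emptiness, a step you do not mention explicitly. With $(r_n)$ increasing, $s_{n_k} \ge r_{n_k}$ is eventually $\ge r$ for every $r \in S$, so both non-emptiness and the closedness argument go through.
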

\begin{proof}
	(i) $\Leftarrow$ (ii) $\Leftrightarrow$ (iii): These implications follow from general topological properties; see Lemma~\ref{lemma:set-of-cluster-points}.
	
	(i) $\Rightarrow$ (ii): Note that one has $\lim_{s \in S} (T_s(I - P_\infty)) = 0$ by Theorem~\ref{thm:JdLG-semigroup-infinity}(c). Moreover, the net $(T_s P_\infty)_{s \in S}$ is contained in the compact set $\sgInftyON{\calT}P_\infty$ by Theorem~\ref{thm:JdLG-semigroup-infinity}(a). Thus each of it subnets has a convergent subnet. Since
	\begin{align*}
		T_s = T_s P_\infty + T_s (I - P_\infty) \qquad \text{for all } s \in S,
	\end{align*}
	this shows that every subnet of $(T_s)_{s \in S}$ has a convergent subnet.
	
	Now assume that $S$ contains a co-final subsequence.
	
	(iii) $\Rightarrow$ (iv) $\Rightarrow$ (i): This, again, follows from the general Lemma~\ref{lemma:set-of-cluster-points}.
\end{proof}

If $(x_\alpha)_\alpha$ is a net in an arbitrary metric (or topological) space whose set of cluster points is non-empty and compact, then the set of cluster points of a fixed subnet of $(x_\alpha)_\alpha$ might well be empty. The implication (i) $\Rightarrow$ (ii) in Proposition~\ref{prop:characterisation-of-compact-and-non-empty} show that the situation is different for our semigroup setting. A nice consequence of this observation is the subsequent Corollary~\ref{cor:subsemigroup-non-empty-and-compact}. For a proper understanding of that corollary, the following algebraic observation is important.

\begin{remark} \label{remark:order-on-subsemigroups}
	Let $R$ be a subsemigroup of $S$ that contains $0$. Denote the pre-order on $R$ inherited from $S$ by $\le_S$ and denote the pre-order on $R$ induced by its semigroup operation by $\le_R$. For all $r_1,r_2 \in R$ one then has the implication
	\begin{align*}
		r_1 \le_R r_2 \quad \Longrightarrow \quad r_1 \le_S r_2.
	\end{align*}
	Note that $\le_R$ and $\le_S$ do not coincide in general, which can be seen, for instance, by considering the subsemigroup $\{0\} \cup [1,\infty)$ of $([0,\infty),+)$.
	
	Now, let $X$ be a set and for each $r \in R$, let $x_r \in X$. Let us use, within this remark, the notations $(x_r)_{r \in (R,\le_R)}$ and $(x_r)_{r \in (R,\le_S)}$ to distinguish the nets that we obtain be considering the different pre-orders $\le_R$ and $\le_S$ on $R$. Then it follows from the implication above that the net $(x_r)_{r \in (R,\le_R)}$ is a subnet of $(x_r)_{r \in (R,\le_S)}$.
	
	In particular, if $R$ is cofinal in $S$ and $(x_s)_{s \in S}$ is a net in $X$, then $(x_r)_{r \in (R,\le_R)}$ is a subnet of $(x_s)_{s \in S}$.
\end{remark}

\begin{corollary} \label{cor:subsemigroup-non-empty-and-compact}
	Let $E$ be a Banach space. Let $R$ be a subsemigroup of $S$ that contains $0$ and is cofinal in $S$ and let $(T_s)_{s \in S}$ is a bounded semigroup on $E$ whose associated semigroup at infinity is non-empty and compact.
	
	Then the semigroup at infinity associated with $(T_s)_{s \in R}$ is also non-empty and compact, and the projections at infinity of $(T_s)_{s \in S}$ and $(T_s)_{s \in R}$ coincide.
\end{corollary}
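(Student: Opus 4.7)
The plan is to write $\calT_\infty^S$ and $\calT_\infty^R$ for the semigroups at infinity associated with $(T_s)_{s \in S}$ and $(T_r)_{r \in R}$, respectively, and similarly $P_\infty^S$ and $P_\infty^R$ for the corresponding projections at infinity. The core observation driving the proof is the one flagged in Remark~\ref{remark:order-on-subsemigroups}: since $R$ is cofinal in $S$ and contains $0$, the net $(T_r)_{r \in (R,\le_R)}$ is a genuine subnet (in the sense of Kelley) of $(T_s)_{s \in (S,\le_S)}$. This single fact will take care of both assertions.

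First I would establish non-emptiness and compactness of $\calT_\infty^R$ by appealing to Proposition~\ref{prop:characterisation-of-compact-and-non-empty}. The semigroup $(T_r)_{r \in R}$ inherits boundedness from $(T_s)_{s \in S}$, so the proposition applies. Any subnet of $(T_r)_{r \in R}$ is, by the subnet observation above and transitivity of the subnet relation, also a subnet of $(T_s)_{s \in S}$. By the assumption on $\calT_\infty^S$ combined with Proposition~\ref{prop:characterisation-of-compact-and-non-empty} applied to $(T_s)_{s \in S}$, such a subnet admits a convergent subnet. Hence every subnet of $(T_r)_{r \in R}$ has a convergent subnet, and another application of Proposition~\ref{prop:characterisation-of-compact-and-non-empty} — in the reverse direction — shows that $\calT_\infty^R$ is non-empty and compact.

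For the coincidence $P_\infty^R = P_\infty^S$, I would exploit that cluster points of a subnet are cluster points of the original net, so $\calT_\infty^R \subseteq \calT_\infty^S$. Now apply Theorem~\ref{thm:JdLG-semigroup-infinity}(b) twice: to $(T_r)_{r \in R}$ it says that $\calT_\infty^R$ is a group with neutral element $P_\infty^R$, in particular $P_\infty^R$ is an idempotent element of $\calT_\infty^S$; to $(T_s)_{s \in S}$ it says that $\calT_\infty^S$ itself is a group with neutral element $P_\infty^S$. Since the only idempotent of a group is its neutral element, it follows that $P_\infty^R = P_\infty^S$.

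I do not anticipate a serious obstacle, but the one subtle point that needs to be articulated carefully is the verification of the subnet property using the distinction between the pre-orders $\le_S$ and $\le_R$: the cofinality of $R$ in $S$ (with respect to $\le_S$) together with the implication $r_1 \le_R r_2 \Rightarrow r_1 \le_S r_2$ from Remark~\ref{remark:order-on-subsemigroups} is exactly what is needed to guarantee that the tail of $(T_r)_{r \in (R,\le_R)}$ eventually lies above any prescribed $s \in S$ in the pre-order $\le_S$. Once this is spelled out, both claims fall out with only a few lines of argument.
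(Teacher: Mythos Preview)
Your argument is correct. The first half (non-emptiness and compactness of $\calT_\infty^R$) is essentially identical to the paper's: both invoke Remark~\ref{remark:order-on-subsemigroups} to see that $(T_r)_{r\in R}$ is a subnet of $(T_s)_{s\in S}$ and then feed this into Proposition~\ref{prop:characterisation-of-compact-and-non-empty}; you use the characterisation via ``every subnet has a convergent subnet'' whereas the paper uses the equivalent characterisation via universal subnets, but that is only cosmetic.

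The second half, however, is a genuinely different route. You observe that $\calT_\infty^R \subseteq \calT_\infty^S$ (cluster points of a subnet are cluster points of the ambient net), and then use Theorem~\ref{thm:JdLG-semigroup-infinity}(b) to conclude that $P_\infty^R$ is an idempotent sitting inside the \emph{group} $\calT_\infty^S$, hence must equal its unique idempotent $P_\infty^S$. The paper instead shows that $\ker P_\infty^S = \ker P_\infty^R$ by applying Theorem~\ref{thm:JdLG-semigroup-infinity}(c) and~(d) in both directions (together with Proposition~\ref{prop:bounded-convergence} to pass convergence to~$0$ from the $R$-net back up to the $S$-net), and then uses that two commuting projections with the same kernel coincide. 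Your argument is shorter and more structural: it exploits only the group property of the semigroup at infinity, and avoids entirely the orbit-level characterisation of $\ker P_\infty$. The paper's approach, by contrast, is more hands-on and makes the equality of kernels explicit, which could be useful if one later wants finer information about the splitting; but for the bare statement of the corollary your idempotent argument is cleaner.
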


Note that in the corollary the semigroup $R$ is endowed with the order induced by its semigroup operation (denoted by $\le_R$ in Remark~\ref{remark:order-on-subsemigroups}). For any other order on $R$ (for instance the order inherited from $S$) we did not even define the notion \emph{semigroup at infinity}.

\begin{proof}[Proof of Corollary~\ref{cor:subsemigroup-non-empty-and-compact}]
	It follows from Remark~\ref{remark:order-on-subsemigroups} that $(T_s)_{s \in R}$ is a subnet of $(T_s)_{s \in S}$. In particular, every universal subnet of $(T_s)_{s \in R}$ is also a universal subnet of $(T_s)_{s \in S}$ and thus convergent by Proposition~\ref{prop:characterisation-of-compact-and-non-empty}. Hence, by the same proposition the semigroup at infinity associated with $(T_s)_{s \in R}$ is non-empty and compact.
	
	Let $P_\infty$ and $Q_\infty$ denote the projections at infinity of $(T_s)_{s \in S}$ and $(T_s)_{s \in R}$, respectively. Those two projections commute. It follows from Theorem~\ref{thm:JdLG-semigroup-infinity}(c) that $\lim_{s \in S} T_s|_{\ker P_\infty} = 0$ and thus, in particular, $\lim_{s \in R} T_s|_{\ker P_\infty} = 0$; Theorem~\ref{thm:JdLG-semigroup-infinity}(d), applied to the semigroup $(T_s)_{s \in R}$, thus implies that $Q_\infty x = 0$ for every $x \in \ker P_\infty$, i.e., $\ker P_\infty \subseteq \ker Q_\infty$.
	
	Conversely, it also follows from Theorem~\ref{thm:JdLG-semigroup-infinity}(c) that $\lim_{s \in R} T_s|_{\ker Q_\infty} = 0$, so Proposition~\ref{prop:bounded-convergence} implies that even $\lim_{s \in S} T_s|_{\ker Q_\infty} = 0$. Theorem~\ref{thm:JdLG-semigroup-infinity}(d), applied to the semigroup $(T_s)_{s \in S}$, thus implies that $P_\infty x = 0$ for every $x \in \ker Q_\infty$, i.e., $\ker Q_\infty \subseteq \ker P_\infty$. Therefore, we proved that the commuting projections $P_\infty$ and $Q_\infty$ have the same kernel. The general observation that two commuting projections coincide if their kernels coincide, thus yields $P_\infty = Q_\infty$.
\end{proof}

In order to determine the projection $P_\infty$ in concrete situations the following proposition is quite useful; it shows that $P_\infty$ is uniquely determined by some of its properties listed in Theorem~\ref{thm:JdLG-semigroup-infinity}.

\begin{proposition} \label{prop:uniqueness-of-proposition}
	Let $(T_s)_{s \in S}$ be a bounded semigroup on a Banach space $E$ and let $P \in \calL(E)$ be a projection that commutes with all operators $T_s$. Consider the following assertions:
	\begin{enumerate}[\upshape (a)]
		\item $\lim_s T_s|_{\ker P} = 0$ (with respect to the operator norm on $\calL(\ker P)$).
				
		\item The set $\{T_s|_{PE} \colon s \in S\}$ is relatively compact in $\calL(PE)$.

		\item The net $(T_s x)_{s \in S}$ does not converge to $0$ for any $x \in PE \setminus \{0\}$.
	\end{enumerate}
	If assertions~(a) and~(b) are satisfied, then the semigroup at infinity, $\sgInftyON{\calT}$, is non-empty and compact, and the projection at infinity satisfies
	\begin{align*}
		P_\infty E \subseteq PE \qquad \text{and} \qquad \ker P_\infty \supseteq \ker P.
	\end{align*}
	If all assertions assertions~\textup{(a)--(c)} are satisfied, then in addition $P_\infty = P$.
\end{proposition}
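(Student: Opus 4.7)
The plan is to leverage the block decomposition $E = PE \oplus \ker P$ induced by the commuting projection $P$. Since $P$ commutes with every $T_s$, each operator splits as $T_s = T_s P + T_s(I-P)$, with the two summands acting independently on $PE$ and on $\ker P$. Hypothesis~(a) gives $\norm{T_s(I-P)} \to 0$ via the bound $\norm{T_s(I-P)} \le \norm{T_s|_{\ker P}}\,\norm{I-P}$, and hypothesis~(b) transfers to relative compactness of $\{T_s P \colon s \in S\}$ in $\calL(E)$ because the map $A \mapsto \iota A P$ from $\calL(PE)$ to $\calL(E)$ is continuous, where $\iota$ denotes the inclusion $PE \hookrightarrow E$.

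From this I would deduce that every subnet of $(T_s)_{s \in S}$ has a convergent subnet: extract a convergent subnet from the relatively compact piece $T_{s_\alpha}P$ and add the null-limit of $T_{s_\alpha}(I-P)$. By Proposition~\ref{prop:characterisation-of-compact-and-non-empty} this shows that $\sgInftyON{\calT}$ is non-empty and compact. To establish $P_\infty E \subseteq PE$, take any subnet $T_{s_\alpha} \to P_\infty$; then $T_{s_\alpha}(I-P) \to 0$ forces $P_\infty(I-P) = 0$, so $P_\infty = P_\infty P$. Because $P$ commutes with each $T_s$ and multiplication by $P$ is operator-norm continuous, $P$ also commutes with the cluster point $P_\infty$, so $P P_\infty = P_\infty P = P_\infty$. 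This simultaneously yields $P_\infty E \subseteq PE$ and $\ker P \subseteq \ker P_\infty$.

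For the equality $P = P_\infty$ under hypothesis~(c), I would combine the above with Theorem~\ref{thm:JdLG-semigroup-infinity}(d), which identifies $\ker P_\infty$ as the set of those $x$ for which $(T_s x)_{s \in S}$ converges to $0$ in norm. For any nonzero $x \in PE$, hypothesis~(c) says $(T_s x)$ does not tend to zero, hence $P_\infty x \neq 0$; therefore $PE \cap \ker P_\infty = \{0\}$. Taking $y \in \ker P_\infty$ and decomposing $y = Py + (I-P)y$, the second summand lies in $\ker P \subseteq \ker P_\infty$, so $Py = y - (I-P)y \in PE \cap \ker P_\infty = \{0\}$, which means $y \in \ker P$. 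Hence $\ker P_\infty = \ker P$.

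Finally I would conclude $P = P_\infty$ from the fact that two commuting projections with the same kernel coincide: since $(I - P_\infty)E = \ker P_\infty = \ker P$, we have $P(I-P_\infty) = 0$, i.e.\ $P = PP_\infty$, and symmetrically $P_\infty = P_\infty P$; combined with $PP_\infty = P_\infty P$ this gives $P = PP_\infty = P_\infty P = P_\infty$. I do not expect any real obstacle: all heavy lifting has been done in Theorem~\ref{thm:JdLG-semigroup-infinity} and Proposition~\ref{prop:characterisation-of-compact-and-non-empty}, and the remaining work consists of carefully tracking the decomposition induced by $P$ and matching $P$ against $P_\infty$ via their kernels.
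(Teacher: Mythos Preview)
Your proof is correct and follows essentially the same route as the paper: split $T_s$ along $E = PE \oplus \ker P$, use Proposition~\ref{prop:characterisation-of-compact-and-non-empty} for non-emptiness and compactness of $\sgInftyON{\calT}$, and then match $P$ with $P_\infty$ via their kernels using Theorem~\ref{thm:JdLG-semigroup-infinity}(d). The only cosmetic differences are that the paper works with universal subnets (condition~(iii) of Proposition~\ref{prop:characterisation-of-compact-and-non-empty}) rather than the ``every subnet has a convergent subnet'' formulation, and obtains $\ker P \subseteq \ker P_\infty$ by invoking Theorem~\ref{thm:JdLG-semigroup-infinity}(d) rather than directly from $P_\infty = P_\infty P$ as you do; both are equivalent packagings of the same argument.
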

\begin{proof}
	First note that the semigroup leaves both the kernel and the range of $P$ invariant since $P$ commutes with each operator $T_s$. Now assume that~(a) and~(b) are satisfied and let $(T_{s_j})$ be a universal subnet of $(T_s)_{s \in S}$. By~(a), $(T_{s_j}|_{\ker P})$ converges to $0$ and by~(b), $(T_{s_j}|_{PE})$ is convergent. Thus, the net $(T_{s_j})$ is convergent, which proves that $\sgInftyON{\calT}$ is non-empty and compact by Proposition~\ref{prop:characterisation-of-compact-and-non-empty}. It follows from assumption~(a) and Theorem~\ref{thm:JdLG-semigroup-infinity}(d) that $\ker P_\infty \supseteq \ker P$. To show that $P_\infty E \subseteq PE$, let $x \in P_\infty E$. We have $(\id_E-P)x \in \ker P \subseteq \ker P_\infty$, and since $P_\infty$ and $P$ commute, this implies that $0 = (\id_E-P)P_\infty x  = (\id_E-P)x$, so $x = Px \in PE$.
	
	Now assume in addition that assumption~(c) is satisfied. We show that the inclusion $\ker P_\infty \subseteq \ker P$ is also satisfied then. Let $x \in \ker P_\infty$. Since $P$ and $P_\infty$ commute, the projection $P$ leaves $\ker P_\infty$ invariant, i.e., we also have $Px \in \ker P_\infty$. Hence, $T_s P x \to 0$ by Theorem~\ref{thm:JdLG-semigroup-infinity}(d), so it follows from assumption~(c) that $Px = 0$. We thus proved that the kernels of $P_\infty$ and $P$ coincide, so $P_\infty = P$.
\end{proof}

\subsection{Powers of a single operator}

In this subsection we consider time-discrete semigroups, i.e., semigroups of the form $(T^n)_{n \in \bbN_0}$ for a single operator $T$. Let us first note in the following lemma that, in this case, the semigroup at infinity is non-empty and compact if and only if the entire set $\{T^n\colon n \in \bbN_0\}$ is relatively compact in $\calL(E)$.

\begin{lemma} \label{lemma:compactness-discrete-case}
	Let $T \in \calL(E)$ be a power-bounded operator on a Banach space $E$. Then the semigroup at infinity, $\sgInftyON{\calT}$, associated to the semigroup $(T^n)_{n \in \bbN_0}$ is non-empty and compact if and only if the set $\calT = \{T^n\colon  n \in \bbN_0\}$ is relatively compact in $\calL(E)$.
\end{lemma}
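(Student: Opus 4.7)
The plan is to prove both directions via the decomposition $T^n = T^n P_\infty + T^n(\id_E - P_\infty)$ furnished by Theorem~\ref{thm:JdLG-semigroup-infinity}, combined with the fact that $\calL(E)$ is a metric space, so that relative compactness and sequential compactness coincide.

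For the easy direction, suppose $\calT = \{T^n\colon n \in \bbN_0\}$ is relatively compact. A cofinal sequence in $\bbN_0$ is simply a sequence of indices tending to infinity, and $(T^{n_k})_{k \in \bbN}$ is contained in the relatively compact set $\calT$, hence admits a convergent subsequence. The implication (iv) $\Rightarrow$ (i) in Proposition~\ref{prop:characterisation-of-compact-and-non-empty} then yields that $\sgInftyON{\calT}$ is non-empty and compact.

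For the forward direction, assume $\sgInftyON{\calT}$ is non-empty and compact. By Theorem~\ref{thm:JdLG-semigroup-infinity}(a) we have $\calT P_\infty = \sgInftyON{\calT} P_\infty$; this set is compact as the continuous image of the compact set $\sgInftyON{\calT}$ under right-multiplication by $P_\infty$. In particular, $\{T^n P_\infty \colon n \in \bbN_0\}$ is relatively compact. On the other hand, since the remark after Theorem~\ref{thm:JdLG-semigroup-infinity} ensures that $\ker P_\infty$ is $T^n$-invariant, we may estimate
\begin{align*}
\norm{T^n(\id_E - P_\infty)} \le \norm{T^n|_{\ker P_\infty}} \cdot \norm{\id_E - P_\infty},
\end{align*}
and the right-hand side tends to $0$ by Theorem~\ref{thm:JdLG-semigroup-infinity}(c). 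Given any sequence $(T^{n_k})$ in $\calT$, we first extract a subsequence along which the $P_\infty$-part converges (using relative compactness of $\calT P_\infty$); along this subsequence the complementary part automatically tends to $0$, so the whole sequence $(T^{n_k})$ converges (to a limit that in fact lies in $\sgInftyON{\calT} P_\infty$). Thus $\calT$ is relatively compact.

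Neither implication poses a genuine obstacle: the lemma is essentially a bookkeeping corollary of Theorem~\ref{thm:JdLG-semigroup-infinity}, and what makes the discrete-time case special is just that $\bbN_0$ is countable, so that sequential compactness is the whole story. The one point that deserves care is that we must pass from the pointwise-flavored statement $\lim_n T^n|_{\ker P_\infty} = 0$ of Theorem~\ref{thm:JdLG-semigroup-infinity}(c) to the operator-norm convergence of $T^n(\id_E - P_\infty)$ on all of $E$; this is handled by the invariance of $\ker P_\infty$ noted just after Theorem~\ref{thm:JdLG-semigroup-infinity}.
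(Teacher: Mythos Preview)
Your ``$\Leftarrow$'' direction is fine and matches the paper. The ``$\Rightarrow$'' direction, however, has a genuine gap in the final step. You claim that after extracting a subsequence along which $T^{n_k} P_\infty$ converges, ``the complementary part automatically tends to $0$''. This uses $T^n(\id_E - P_\infty) \to 0$ as $n \to \infty$, but that only gives you convergence along the subsequence if the indices $n_k$ themselves tend to infinity --- which nothing in your argument guarantees. Take for instance $n_k = 0$ for all $k$: then $T^{n_k} P_\infty = P_\infty$ certainly converges, but $T^{n_k}(\id_E - P_\infty) = \id_E - P_\infty$ is constant and nonzero whenever $P_\infty \neq \id_E$. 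Your parenthetical that the limit ``in fact lies in $\sgInftyON{\calT} P_\infty$'' makes the issue visible: this would force $\calT \subseteq \overline{\sgInftyON{\calT} P_\infty}$, hence $\id_E = R P_\infty$ for some $R$, which implies $P_\infty = \id_E$.

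The fix is easy, and it is exactly what the paper does: given an arbitrary sequence $(T^{n_k})$, split into the case where $(n_k)$ is bounded (pigeonhole gives a constant, hence convergent, subsequence) and the case where $(n_k)$ is unbounded (pass to a subsequence that is cofinal in $\bbN_0$, and then your decomposition argument --- or equivalently Proposition~\ref{prop:characterisation-of-compact-and-non-empty} --- applies). Alternatively, you could observe that $\{T^n(\id_E - P_\infty)\colon n \in \bbN_0\}$ is itself relatively compact (being a convergent sequence), and then argue via a second subsequence extraction. Either way, the bounded-index case must be handled separately; it is precisely the feature that distinguishes relative compactness of the full orbit $\calT$ from the ``asymptotic'' compactness encoded in $\sgInftyON{\calT}$.
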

\begin{proof}
	``$\Rightarrow$''
	Let $(T^{n_k})_{k \in \bbN}$ be an arbitrary sequence in $\calT$; we have to distinguish two cases since this sequence might not be a subsequence of $(T^n)_{n \in \bbN_0}$. In the first case, the index sequence $(n_k)_{k \in \bbN}$ is bounded; then, by the pigeon hole principle, it has a constant subsequence, so $(T^{n_k})_{k \in \bbN}$ has a constant, thus convergent, subsequence.
	
	In the second case the index sequence $(n_k)_{k \in \bbN}$ is unbounded. Then it has a subsequence $(n_{k_j})_{j \in \bbN}$ that is cofinal in $\bbN_0$. Hence, Proposition~\ref{prop:characterisation-of-compact-and-non-empty} yields that $(T^{n_{k_j}})_{j \in \bbN}$ has a convergent subsequence, and the latter is also a subsequence of $(T^{n_k})_{k \in \bbN}$. 
	
	``$\Leftarrow$'' The implication follows directly from Proposition~\ref{prop:characterisation-of-compact-and-non-empty}. 
\end{proof}

Now we derive a spectral characterization of the compactness and non-emptiness of the semigroup at infinity associated to a single operator. 

\begin{proposition} \label{prop:compactness-for-single-operators}
	Let $T \in \calL(E)$ be a power-bounded operator on a complex Banach space $E$ and consider the semigroup $(T^n)_{n \in \bbN_0}$ on $E$. Then the following two assertions are equivalent:
	\begin{enumerate}[\upshape (i)]
		\item The semigroup at infinity, $\sgInftyON{\calT}$, is non-empty and compact.
		\item All spectral values of $T$ on the unit circle are poles of the resolvent of $T$.
	\end{enumerate}
	In this case, $P_\infty$ coincides with the spectral projection of $T$ associated with $\spec(T) \cap \bbT$.
\end{proposition}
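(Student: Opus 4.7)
My plan is to combine Theorem~\ref{thm:JdLG-semigroup-infinity} with Riesz functional calculus, using Proposition~\ref{prop:uniqueness-of-proposition} to match the JdLG projection $P_\infty$ with the spectral projection for the peripheral spectrum.

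For (ii) $\Rightarrow$ (i), I would first observe that since poles of $\Res(\cdot,T)$ are isolated spectral values, $\sigma(T) \cap \bbT$ is a discrete closed subset of the compact circle, hence finite: say $\{\lambda_1, \dots, \lambda_m\}$. Power-boundedness of $T$ rules out poles of order $\geq 2$, so each $\lambda_i$ is a simple pole with eigenprojection $P_i$; set $P := \sum_i P_i$. On the $T$-invariant splitting $E = PE \oplus \ker P$, one has $T^n|_{\ker P} \to 0$ in operator norm (since $\sigma(T|_{\ker P})$ lies in a closed disk of radius strictly less than $1$), while $T^n|_{PE} = \sum_i \lambda_i^n P_i$ is the image of $(\lambda_1^n, \dots, \lambda_m^n) \in \bbT^m$ under the continuous map $(z_i) \mapsto \sum_i z_i P_i$ and therefore lies in a compact subset of $\calL(PE)$. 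So $\{T^n\}$ is relatively compact in $\calL(E)$, and Lemma~\ref{lemma:compactness-discrete-case} yields (i). To conclude $P_\infty = P$ I would apply Proposition~\ref{prop:uniqueness-of-proposition} to this $P$; conditions (a) and (b) are just verified, and (c) follows because the closure in $\bbT^m$ of the cyclic subgroup generated by $(\lambda_1, \dots, \lambda_m)$ is a closed subgroup of $\bbT^m$ and therefore contains the identity $(1, \dots, 1)$, so there are $n_k$ with $\lambda_i^{n_k} \to 1$ for all $i$ simultaneously and hence $T^{n_k}x \to x \neq 0$ for every nonzero $x \in PE$.

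For (i) $\Rightarrow$ (ii), I would first set up the spectral picture. Theorem~\ref{thm:JdLG-semigroup-infinity}(c) gives $T^n|_{\ker P_\infty} \to 0$ in operator norm, so the spectral radius of $T|_{\ker P_\infty}$ is strictly less than $1$; Theorem~\ref{thm:JdLG-semigroup-infinity}(b) places $T|_{E_\infty}$ in a compact operator group, so both $T|_{E_\infty}$ and its inverse are power-bounded and hence $\sigma(T|_{E_\infty}) \subseteq \bbT$. The two spectra are therefore separated by an annulus, $\sigma(T) \cap \bbT = \sigma(T|_{E_\infty})$, and $P_\infty$ coincides with the Riesz spectral projection of $T$ for $\sigma(T) \cap \bbT$, which already settles the final statement of the proposition. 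It remains to show each $\lambda \in \sigma(T|_{E_\infty})$ is a pole, for which it is enough to prove $\sigma(T|_{E_\infty})$ is finite: once this is known every point is isolated in $\sigma(T|_{E_\infty})$, hence in $\sigma(T)$, and power-boundedness upgrades the resulting poles to order one.

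Proving this finiteness is the main obstacle. My approach is to use compactness of the operator group to obtain $m_k \to \infty$ with $T^{m_k}|_{E_\infty} \to \id_{E_\infty}$ in operator norm (if every such sequence is bounded then $T^p|_{E_\infty} = \id_{E_\infty}$ for some $p$, putting $\sigma(T|_{E_\infty})$ in the finite set of $p$-th roots of unity right away); polynomial spectral mapping then gives $\sup_{\lambda \in \sigma(T|_{E_\infty})} |\lambda^{m_k} - 1| \to 0$. Combined with Theorem~\ref{thm:JdLG-semigroup-infinity}(e), which describes $E_\infty$ as the closed linear span of $T$-eigenvectors for unimodular eigenvalues, one rules out any accumulation point $\mu$ of $\sigma(T|_{E_\infty})$: dividing $T$ by $\mu$ (which preserves the hypotheses) reduces to the case of distinct eigenvalue angles $\theta_n \to 0$ with $\sup_n |\sin(m_k \theta_n / 2)| \to 0$ along $m_k \to \infty$, and a careful pigeonhole analysis of the positions of the $\theta_n$ relative to the $m_k$-th roots of unity then yields a contradiction. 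This Diophantine step is the genuinely delicate part of the argument.
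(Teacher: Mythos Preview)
Your argument for (ii) $\Rightarrow$ (i) and for the identification $P_\infty = P$ is essentially the paper's proof: split along the Riesz projection $P$ for $\sigma(T)\cap\bbT$, observe that $T$ acts on $PE$ as multiplication by $(\lambda_1,\dots,\lambda_m)$, and feed conditions (a)--(c) into Proposition~\ref{prop:uniqueness-of-proposition}.

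Your route for (i) $\Rightarrow$ (ii), however, has a genuine gap. After extracting a single sequence $m_k\to\infty$ with $T^{m_k}|_{E_\infty}\to \id_{E_\infty}$ and passing to the spectral side, you reduce everything to the purely number-theoretic claim that one cannot have distinct angles $\theta_n\to 0$ and integers $m_k\to\infty$ with $\sup_n \lvert \sin(m_k\theta_n/2)\rvert \to 0$. This claim is \emph{false}: take $\theta_n = 2\pi/n!$ and $m_k = k!$. Then $m_k\theta_n/2 = \pi\,k!/n!$ is an integer multiple of $\pi$ for $n\le k$, while for $n>k$ one has $\lvert\sin(\pi k!/n!)\rvert \le \pi k!/n! \le \pi/(k+1)$, so the supremum over $n$ tends to $0$. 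Thus no ``careful pigeonhole analysis'' can produce the contradiction you announce; the existence of a single return sequence $m_k$ simply does not force the peripheral spectrum to be finite. (What saves the proposition is that for $T=\operatorname{diag}(\ue^{2\pi \ui/n!})$ on $\ell^2$ the full orbit $\{T^m:m\in\bbN_0\}$ is \emph{not} relatively compact in operator norm, but your argument discards this global information after extracting $(m_k)$.)

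The paper avoids this finiteness question entirely and proves directly that every $\lambda\in\sigma(T)\cap\bbT$ is a pole. The key observation is that the Neumann series
\[
(r\lambda-\lambda)\Res(r\lambda,T)=\sum_{n\ge 0}(1-r^{-1})\,r^{-n}\,(\overline{\lambda}\,T)^n,\qquad r>1,
\]
places $(r\lambda-\lambda)\Res(r\lambda,T)$ in the closed convex hull of the relatively compact set $\bbT\cdot\{T^n:n\in\bbN_0\}$ (relative compactness comes from Lemma~\ref{lemma:compactness-discrete-case}), which is itself compact. Hence the net $\big((r\lambda-\lambda)\Res(r\lambda,T)\big)_{r\downarrow 1}$ has a convergent subnet, and Proposition~\ref{prop:pole-of-resolvent-by-resolvent-convergence} then yields that $\lambda$ is a (first-order) pole. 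This is both shorter and avoids any Diophantine detour.
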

\begin{proof}
	(i) $\Rightarrow$ (ii): Let $\lambda \in \bbT$ be a spectral value of $T$. Let $\calK$ denote the closed convex hull of the relatively compact set 
	\begin{align*}
	\bbT \cdot \{T^n\colon  n \in \bbN_0\};
	\end{align*}
	then $\calK$ is compact, too. Moreover, the operator $(r\lambda - \lambda) \Res(r\lambda,T)$ is contained in $\calK$ for each $r > 1$; this is a consequence of the Neumann series representation of the resolvent. Consequently, the net $\big((r\lambda - \lambda) \Res(r\lambda,T)\big)_{r \in (1,\infty)}$ (where $(1,\infty)$ is directed conversely to the order inherited from $\bbR$) has a convergent subnet. This shows, according to Proposition~\ref{prop:pole-of-resolvent-by-resolvent-convergence} in the appendix, that $\lambda$ is a pole of $\Res(\argument,T)$.
		
	(ii) $\Rightarrow$ (i): Note that, as a consequence of~(ii), $\spec(T) \cap \bbT$ is isolated from the rest of the spectrum of $T$; let $P$ denote the spectral projection associated with $\spec(T) \cap \bbT$. We show that $P$ satisfies the assumptions (a)--(c) in Proposition~\ref{prop:uniqueness-of-proposition}.
	
	The spectral radius of $T|_{\ker P}$ is strictly less than $1$, so $T|_{\ker P}^n \to 0$ as $n \to \infty$; this proves assumption~(a). In order to show assumptions~(b) and~(c), note that the set $\spec(T) \cap \bbT$ is finite as a consequence of~(ii), and enumerate its elements (if any exist) as $\lambda_1,\dots,\lambda_m$.
	
	By assumption, each $\lambda_k$ is a pole of the resolvent of $T$, and its pole order equals $1$ since $T$ is power bounded. Hence, $T$ acts as $\lambda_k$ times the identity on the range of the associated spectral projection $P_k$. It follows that $T$ acts on $PE = P_1E \oplus \dots \oplus P_mE$ as the multiplication with the tuple $(\lambda_1, \dots,\lambda_m)$, which readily implies that $\{(T|_{PE})^n \colon n \in \bbN_0\}$ is relatively compact with respect to the operator norm and that $T^nx$ does not converge to $0$ as $n \to \infty$ for any $x \in PE$. Thus, all assumptions~(a)--(c) of Proposition~\ref{prop:uniqueness-of-proposition} are satisfied, which shows that $\sgInftyON{\calT}$ is non-empty and compact and $P = P_\infty$.
\end{proof}

\subsection{Semigroups which contain a quasi-compact operator}

Recall that a bounded operator $T$ on a Banach space $E$ is called \emph{quasi-compact} if there exists a compact operator $K$ on $E$ and $n \in \bbN$ such that $\norm{T^n - K} < 1$. It is well known that, if the underlying scalar field is complex, a quasi-compact operator $T$ has at most finitely many spectral values on the complex unit circle, and that all those spectral values are poles of the resolvent of $T$ with finite-rank residuum. Hence, the spectral projection associated to the part of the spectrum on the unit circle has finite rank.

Quasi-compact operators -- and in particular, of course, compact operators -- appear quite often in concrete applications. This is why the following proposition, in conjunction with Theorem~\ref{thm:JdLG-semigroup-infinity} and Corollary~\ref{cor:characterization-of-sg-convergence}, is very useful.

\begin{proposition} \label{prop:quasi-compact}
	Let $(T_s)_{s \in S}$ be a bounded semigroup on a Banach space $E$ such that, for some $s_0 \in S$, the operator $T_{s_0}$ is quasi-compact. Then the semigroup at infinity associated to $(T_s)_{s \in S}$ is non-empty and compact, and the projection at infinity has finite rank. 
\end{proposition}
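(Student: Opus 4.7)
The plan is to reduce the statement to the setting of Proposition~\ref{prop:uniqueness-of-proposition} by producing the right finite-rank projection $P$ from the single quasi-compact operator $T_{s_0}$. I assume the scalar field is complex; the real case follows by passing to a complexification, since quasi-compactness and boundedness are preserved by the canonical extension, and the resulting complex projection at infinity commutes with complex conjugation (being built from operators that do), hence descends to a projection on $E$ of the same finite rank.

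In the complex case, quasi-compactness of $T_{s_0}$ implies that $\spec(T_{s_0}) \cap \bbT$ is a finite set of poles of $\Res(\argument, T_{s_0})$ with finite-rank residua; let $P$ denote the associated Riesz spectral projection (equivalently, the projection at infinity of the discrete-time semigroup $(T_{s_0}^n)_{n \in \bbN_0}$ delivered by Proposition~\ref{prop:compactness-for-single-operators}). Then $P$ has finite rank. Writing $P$ as a contour integral of $\Res(\argument, T_{s_0})$ around $\spec(T_{s_0}) \cap \bbT$ shows that $P$ commutes with every $T_s$, since each $T_s$ commutes with $T_{s_0}$ and hence with every $\Res(\lambda, T_{s_0})$. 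Standard Riesz decomposition yields $\spec(T_{s_0}|_{\ker P}) = \spec(T_{s_0}) \setminus \bbT$, which by power boundedness of $T_{s_0}$ is a compact subset of the open unit disk; hence $r(T_{s_0}|_{\ker P}) < 1$.

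It remains to verify hypotheses~(a) and~(b) of Proposition~\ref{prop:uniqueness-of-proposition} for this projection $P$. For~(b), the range $PE$ is finite-dimensional and $\{T_s|_{PE} : s \in S\}$ is bounded, hence relatively compact in $\calL(PE)$. For~(a), the spectral radius bound above yields $\norm{T_{n s_0}|_{\ker P}} = \norm{(T_{s_0}|_{\ker P})^n} \to 0$, so some iterate $T_{n_0 s_0}|_{\ker P}$ has norm strictly less than $1$; Proposition~\ref{prop:bounded-convergence}, applied to the restriction of the (still bounded) semigroup to the invariant subspace $\ker P$, then gives $\lim_{s \in S} T_s|_{\ker P} = 0$. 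Proposition~\ref{prop:uniqueness-of-proposition} now yields that $\sgInftyON{\calT}$ is non-empty and compact with $P_\infty E \subseteq PE$, so $P_\infty$ has finite rank, as claimed.

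The conceptually critical step is the passage from the discrete statement about powers of $T_{s_0}$ to the full net $(T_s)_{s \in S}$, for which no cofinality or divisibility of $\bbN_0 s_0$ inside $S$ is assumed. Proposition~\ref{prop:bounded-convergence} supplies exactly this bridge on $\ker P$, while the finite-dimensionality of $PE$ delivers the compactness required on the range without further work.
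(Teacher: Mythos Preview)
Your proof is correct and follows essentially the same route as the paper: obtain from $T_{s_0}$ a finite-rank spectral projection $P$ (which the paper identifies via Proposition~\ref{prop:compactness-for-single-operators} as the projection at infinity of $(T_{s_0}^n)_{n\in\bbN_0}$), verify conditions~(a) and~(b) of Proposition~\ref{prop:uniqueness-of-proposition} using Proposition~\ref{prop:bounded-convergence} on $\ker P$ and finite-dimensionality on $PE$, and conclude. Your justifications for the commutation of $P$ with each $T_s$ (contour integral) and for $(T_{s_0}|_{\ker P})^n\to 0$ (spectral radius) differ cosmetically from the paper's but are equally valid.
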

\begin{proof}
	We may assume that the underlying scalar field of $E$ is complex, since otherwise we can consider a complexification of $E$. According to Proposition~\ref{prop:compactness-for-single-operators} the semigroup at infinity associated to $(T_{s_0}^n)_{n \in \bbN_0}$ is non-empty and compact; let $P$ denote the projection at infinity associated to this semigroup at infinity.
	
	Then $P$ commutes with each operator $T_s$, so both $\ker P$ and $PE$ are invariant under the action of the semigroup $(T_s)_{s \in [0,\infty)}$. Moreover, $(T_{s_0}|_{\ker P})^n \to 0$ as $n \to \infty$, so it follows from Proposition~\ref{prop:bounded-convergence} that actually $\lim_{s \in S}T_s|_{\ker P} = 0$. Additionally, it follows from Proposition~\ref{prop:compactness-for-single-operators} and the quasi-compactness of $T_{s_0}$ that $PE$ is finite-dimensional. Since our semigroup is bounded, the set $\{T_s|_{PE}\colon s \in S\}$ is thus relatively compact in $\calL(PE)$, so it follows from Proposition~\ref{prop:uniqueness-of-proposition} that the semigroup at infinity associated with $(T_s)_{s \in S}$ is non-empty and compact, and that the projection at infinity, $P_\infty$, satisfies $P_\infty E \subseteq PE$. Hence, $P_\infty$ has finite rank.
\end{proof}

In the situation of Proposition~\ref{prop:quasi-compact}, the projections at infinity associated with $(T_s)_{s \in S}$ and with $(T_{s_0}^n)_{n \in \bbN_0}$ coincide if the subsemigroup $\{ns_0\colon n \in \bbN_0\}$ is cofinal in $S$ (see Corollary~\ref{cor:subsemigroup-non-empty-and-compact}). Without this additional assumption, the projections at infinity do not need to coincide, as the following examples show.

\begin{examples}
	\begin{enumerate}[(a)]
		 \item Consider the semigroup $S = [0,\infty)$ where the semigroup operation is given by the maximum operator $\lor$; then the order in $S$ coincides with the usual order on $[0,\infty)$. Now, let $E = \bbC^2$, let $Q \in \calL(\bbC^2)$ be the projection onto the first component and define
		\begin{align*}
			T_s =
			\begin{cases}
				\id_{\bbC^2} \quad & \text{if } s \in [0,1], \\
				Q \quad & \text{if } s \in (1,\infty).
			\end{cases}
		\end{align*}
		Then $(T_s)_{s \in S}$ is a bounded semigroup of $([0,\infty),\lor)$, its semigroup at infinity is non-empty and compact and its projection at infinity equals $Q$. The operator $T_1$ is compact, but the projection at infinity associated to $(T_1^n)_{n \in \bbN_0}$ is $\id_{\bbC^2}$.
		
		\item Here is also an example where the underlying semigroup is cancellative: Let $S = [0,\infty)^2$, together with the componentwise addition $+$. Let $E = \mathbb{C}^2$, let $Q \in \calL(\bbC^2)$ denote the projection onto the first component and $P \in \calL(\bbC^2)$ the projection onto the second component. We define a representation $(T_{(s,t)})_{(s,t) \in [0,\infty)^2}$ by
		\begin{align*}
			T_{(s,t)} =
			\begin{cases}
				\id_{\bbC^2} \quad & \text{if } s = 0 \text{ and } t = 0, \\
				Q \quad & \text{if } s > 0 \text{ and } t = 0, \\
				P \quad & \text{if } s = 0 \text{ and } t > 0, \\
				0 \quad & \text{if } s > 0 \text{ and } t > 0.
			\end{cases}
		\end{align*}
		Then $(T_{(s,t)})_{(s,t) \in [0,\infty)^2}$ is a bounded semigroup with non-empty and compact semigroup at infinity; its projection at infinity equals $0$. The operator $T_{(0,1)}$ is compact, but the projection at infinity associated with $(T_{(0,1)}^n)_{n \in \bbN_0}$ equals $P$.
	\end{enumerate}
\end{examples}

\subsection{Beyond the quasi-compact case}

While the situation of Proposition~\ref{prop:quasi-compact} is most important for applications, it is not completely satisfying from a theoretical point of view. Indeed, for every Banach space $E$ and every commutative monoid $(S,+)$ the semigroup at infinity associated to the trivial semigroup $(\id_E)_{s \in S}$ is non-empty and compact, but $\id_E$ is not quasi-compact unless $E$ is finite-dimensional.

In the case of a time-discrete semigroup $(T^n)_{n \in \bbN_0}$ the non-quasi-compact case is still covered by Proposition~\ref{prop:compactness-for-single-operators} -- where non-quasi-compactness of $T$ means precisely that at least one spectral value on the unit circle has infinite-dimensional eigenspace. It would be satisfying to have a similar result for more general semigroups $(S,+)$ at hand, at least for the semigroup $([0,\infty),+)$. However, the following example shows the things are not that simple.

\begin{example} \label{ex:easy-spectrum-not-sufficient-for-semigroups-with-real-times}
	There exists an $L^2$-space and a bounded positive semigroup $\calT = (T_t)_{t \in [0,\infty)}$ on it with the following properties:
	\begin{enumerate}[\upshape(a)]
		\item The spectrum of every operator $T_t$ is finite and consists of poles of the resolvent.
		\item The semigroup at infinity, $\sgInftyON{\calT}$, is not compact.
	\end{enumerate}
	Indeed, let $U \subseteq \bbT$ denote the group of all roots of unity and consider the space $\ell^2(U)$. Note that there exists a group homomorphism $\varphi\colon \bbR \to \bbQ$ which acts as the identity on $\bbQ$ (the existence of $\varphi$ follows from the fact the $\bbR$, seen as a vector space over $\bbQ$, possesses a basis that contains the number $1$). We define $\calT$ by
	\begin{align*}
		T_tf(z) = f(\ue^{2\pi \ui \varphi(t)}z)
	\end{align*}
	for $t \in [0,\infty)$, $f \in \ell^2(U)$ and $z \in U$. Obviously, the semigroup obtained this way is bounded and positive.
	
	For every time $t$ there exists an integer $n \in \bbN$ such that $T_t^n = I$ (indeed, one simply has to choose $n$ such that $n\varphi(t)$ is an integer). Hence, every operator $T_t$ is algebraic (i.e., mapped to $0$ by a polynomial), so it follows that property~(a) is satisfied.
	
	On the other hand, choose a sequence $(q_n)_{n \in \bbN}$ of positive rational numbers which converges to $\infty$ and such that $\ue^{2\pi \ui q_n} \not= \ue^{2\pi \ui q_m}$ whenever $n \not= m$. By applying the sequence $(T_{q_n})_{n \in \bbN}$ to any canonical unit vector in $\ell^2(U)$ we can see that no subsequence of this sequence converges (not even strongly) as $n \to \infty$. Hence, it follows from Proposition~\ref{prop:characterisation-of-compact-and-non-empty} that the semigroup at infinity is either empty or not compact. Since $\sgInftyON{\calT}$ clearly contains the identity operator, we thus conclude that $\sgInftyON{\calT}$ is not compact.
\end{example}

\begin{remark} \label{rem:from-subsemigroups-to-semigroup-is-not-always-possible}
	\begin{enumerate}[(a)]
		\item In the situation of Example~\ref{ex:easy-spectrum-not-sufficient-for-semigroups-with-real-times} the semigroup at infinity associated with the time discrete semigroup $(T_{nt})_{n \in \bbN}$ is, for any time $t \in (0,\infty)$, non-empty and compact; this follows from Proposition~\ref{prop:compactness-for-single-operators}. On the other hand, the semigroup at infinity associated with the entire semigroup $\calT$ is not compact. This shows that the implication in Corollary~\ref{cor:subsemigroup-non-empty-and-compact} does not have a simple converse.
	
		\item It is easy to modify Example~\ref{ex:easy-spectrum-not-sufficient-for-semigroups-with-real-times} in such a way that all orbits of the semigroup become relatively compact: just replace $\ell^2(U)$ with $L^2(\bbT)$ in the example and construct the semigroup in the same way. Then, for each $f \in L^2(\bbT)$, the orbit $\{T_tf\colon t \in [0,\infty)\}$ is a subset of the compact set $\{f(\ue^{2\pi \theta \ui}\argument)\colon \theta \in [0,1]\}$ and thus, the orbit is relatively compact. However, we can see by considering a sequence $(q_n)_{n \in \bbN}$ as in Example~\ref{ex:easy-spectrum-not-sufficient-for-semigroups-with-real-times} that the semigroup at infinity is not compact.
	\end{enumerate}
\end{remark}

Example~\ref{ex:easy-spectrum-not-sufficient-for-semigroups-with-real-times} shows that, if the semigroup at infinity associated to $(T_{s_0}^n)_{n \in \bbN_0}$ is non-empty and compact for each $s_0 \in [0,\infty)$, we cannot automatically conclude that the semigroup at infinity associated to $(T_s)_{s \in [0,\infty)}$ is non-empty and compact. If we want this implication to be true we need an additional assumption, and this is the only time in the theoretical part of this paper where we are forced to impose a time regularity condition on our semigroup. In fact, if the semigroup is strongly continuous at a strictly positive time, we obtain the following characterisation.

\begin{theorem} \label{thm:from-subsemigroups-to-the-semigroup-via-strong-continuity}
	Let $E$ be a Banach space and let $(T_s)_{s \in [0, \infty)}$ be a bounded semigroup on $E$ which is strongly continuous at at least one time $s_0 \in (0,\infty)$. The following assertions are equivalent:
	\begin{enumerate}
		\item[\upshape (i)] For each $s \in (0,\infty)$ the semigroup at infinity associated with $(T_s^n)_{n \in \bbN_0}$ is non-empty and compact.
		
		\item[\upshape (ii)] The semigroup at infinity associated with $(T_s)_{s \in [0,\infty)}$ is non-empty and compact.
	\end{enumerate}
	If the underlying scalar field of $E$ is complex, the above assertions~\textup{(i)} and \textup{(ii)} are also equivalent to:
	\begin{enumerate}
		\item[\upshape (iii)] For each $s \in (0,\infty)$ all spectral values of $T_s$ on the complex unit circle are poles of the resolvent of $T_s$.
	\end{enumerate}
\end{theorem}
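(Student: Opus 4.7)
The plan is to split the proof into three parts: (ii)~$\Rightarrow$~(i), (i)~$\Leftrightarrow$~(iii) in the complex case, and the substantive implication (i)~$\Rightarrow$~(ii). The first follows immediately from Corollary~\ref{cor:subsemigroup-non-empty-and-compact}: for any $s \in (0,\infty)$ the subsemigroup $R = s\bbN_0 \subseteq [0,\infty)$ contains $0$ and is cofinal, so non-emptiness and compactness of the semigroup at infinity of $(T_s^n)_{n \in \bbN_0}$ is inherited from that of $(T_s)_{s \in [0,\infty)}$. In the complex case, (i)~$\Leftrightarrow$~(iii) is Proposition~\ref{prop:compactness-for-single-operators} applied to each $T_s$ separately. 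For (i)~$\Rightarrow$~(ii), I would reduce to complex scalars in the usual way and verify the hypotheses of Proposition~\ref{prop:uniqueness-of-proposition} for the projection $P$ at infinity of $(T_{s_0}^n)_{n \in \bbN_0}$, which is available by (i).

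Since the semigroup is commutative, $P$ commutes with every $T_s$. Theorem~\ref{thm:JdLG-semigroup-infinity}(c) gives $\lim_n T_{s_0}^n|_{\ker P} = 0$ in operator norm, and Proposition~\ref{prop:bounded-convergence} then yields $\lim_s T_s|_{\ker P} = 0$, verifying hypothesis~(a) of Proposition~\ref{prop:uniqueness-of-proposition}. To establish hypothesis~(b), namely relative compactness of $\{T_s|_{PE} \colon s \geq 0\}$ in $\calL(PE)$, I would exploit strong continuity at $s_0$ together with the bijectivity of $T_{s_0}|_{PE}$ on $PE$ (which sits in the compact group $\sgInftyON{(T_{s_0}^n)}|_{PE}$ by Theorem~\ref{thm:JdLG-semigroup-infinity}(b)): from $T_h(T_{s_0}x) = T_{s_0+h}x \to T_{s_0}x$ as $h \to 0$ one extracts $T_h y \to y$ for every $y \in PE$, so that $(T_h|_{PE})_{h \ge 0}$ is a bounded $C_0$-semigroup extending to a bounded $C_0$-group on $\bbR$.

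By~(iii) the spectrum $\sigma(T_{s_0}|_{PE})$ is a discrete subset of the compact circle $\bbT$, hence finite, say $\{\mu_1, \dots, \mu_N\}$, and the associated Riesz projections decompose $PE = F_1 \oplus \dots \oplus F_N$ into closed $T_s$-invariant subspaces with $T_{s_0}|_{F_i} = \mu_i \id_{F_i}$. Let $A_i$ denote the generator of the bounded $C_0$-group $(T_h|_{F_i})_{h \in \bbR}$; because this group is bounded, $\sigma(A_i) \subseteq i\bbR$, and $e^{s_0 A_i} = \mu_i \id$ combined with the spectral mapping theorem for $C_0$-groups pins $\sigma(A_i)$ inside the arithmetic progression $(\log\mu_i)/s_0 + (2\pi i/s_0)\bbZ$. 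Selecting any $s > 0$ with $s/s_0 \notin \bbQ$ and invoking spectral mapping once more shows that an infinite $\sigma(A_i)$ would force $\sigma(T_s|_{F_i}) = e^{s\sigma(A_i)}$ to be an infinite subset of $\bbT$, contradicting~(iii); hence $\sigma(A_i)$ is finite. Combined with the resolvent estimate $\|\Res(\lambda,A_i)\| \le M/|\re \lambda|$ characteristic of bounded $C_0$-groups, this forces each pole of $\Res(\,\cdot\,,A_i)$ to be simple, so the Riesz projections further split $F_i = \bigoplus_j F_{i,j}$ with $A_i$ acting as a pure scalar $i\omega_{i,j}$ on $F_{i,j}$. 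Thus $T_h|_{F_i} = \sum_j e^{ih\omega_{i,j}} \pi_{i,j}$ for every $h \ge 0$, and $\{T_h|_{F_i} \colon h \ge 0\}$ is a continuous image of a finite-dimensional torus, hence relatively compact in $\calL(F_i)$. Summing over $i$ establishes hypothesis~(b), and Proposition~\ref{prop:uniqueness-of-proposition} concludes that the semigroup at infinity of $(T_s)_{s \in [0,\infty)}$ is non-empty and compact.

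The main obstacle I anticipate is the structural analysis of the generator $A_i$ on each $F_i$: combining the spectral mapping theorem for $C_0$-groups, condition~(iii) applied at a time whose ratio with $s_0$ is irrational (to force $\sigma(A_i)$ to be finite), and the standard bounded-group resolvent estimate (to upgrade finiteness of spectrum to simplicity of each pole and thereby exclude any quasi-nilpotent summand) is the most delicate step. Once this is in hand, the relative compactness of $\{T_s|_{PE} \colon s \ge 0\}$ reduces to the observation that each $T_h|_{F_i}$ traces out a torus orbit, and the remainder of the argument is routine bookkeeping.
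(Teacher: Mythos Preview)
Your proposal is correct and follows essentially the same route as the paper: reduce (i)~$\Rightarrow$~(ii) to verifying Proposition~\ref{prop:uniqueness-of-proposition}(a)--(b) for a suitable projection $P$, upgrade $(T_s|_{PE})$ to a bounded $C_0$-group, use an irrational time ratio together with~(iii) to force the generator to have only finitely many purely imaginary spectral values, and conclude that the group is a finite direct sum of rotations.

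Two differences are worth recording. First, the paper inserts an additional step you bypass: via Lemma~\ref{lem:unit-circle-pointwise} and Proposition~\ref{prop:single-net-for-all-operators} it shows that \emph{all} projections $P_{\infty,s}$ coincide, and then works with the common projection. You instead take $P=P_{\infty,s_0}$ for the fixed time $s_0$ from the hypothesis and recover invertibility of every $T_s|_{PE}$ from invertibility of $T_{s_0}|_{PE}$ and the semigroup law; this is legitimate and slightly more economical, while the paper's argument yields the extra information that the discrete projections agree. Second, for the final diagonalisation the paper uses the eigenspace identity for $C_0$-semigroups \cite[Corollary~IV.3.8]{Engel2000} at a freshly chosen small time, whereas you argue via the resolvent bound $\lVert\Res(\lambda,A_i)\rVert\le M/\lvert\re\lambda\rvert$. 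Your formulation ``forces each pole of $\Res(\cdot,A_i)$ to be simple, so the Riesz projections split $F_i$'' tacitly assumes that every isolated point of $\sigma(A_i)$ is a pole \emph{and} that the Riesz projections sum to the identity on $F_i$; neither is automatic for generators of $C_0$-groups with finite spectrum. Both are true here because $T_{s_0}|_{F_i}=\mu_i\,\id$ makes the group on $F_i$ periodic (after a unimodular twist), so the resolvent of $A_i$ has the explicit form $(1-e^{-\lambda s_0})^{-1}\int_0^{s_0}e^{-\lambda t}T_t|_{F_i}\,dt$, from which simple poles and completeness of the Fourier projections are immediate. A sentence to that effect would close the gap.
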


The proof of Theorem~\ref{thm:from-subsemigroups-to-the-semigroup-via-strong-continuity} requires a bit of preparation. Let $\varphi\colon K \to K$ be a continuous map on some compact Hausdorff space $K$. In this case, the pair $(K; \varphi)$ is called a \emph{topological dynamical system}. Further, a point $x \in K$ is called \emph{recurrent} for the system $(K; \varphi)$ if for each neighbourhood $U \subseteq K$ of $x$ there is $n \in \bbN$ such that $\varphi^n(x) \in U$. It is not hard to see that $x \in K$ is recurrent if and only if $x \in K$ is \emph{infinitely recurrent}, that is for each neighbourhood $U \subseteq K$ of $x$ and each $n_0 \in \bbN$ there is $n \in \bbN$ with $n \ge n_0$ such that $\varphi^n(x) \in U$. More facts on recurrence in topological dynamical systems can for instance be found in \cite[Chapter~3.2]{Eisner2015}. We now use these notions to prove the following lemma.

\begin{lemma} \label{lem:unit-circle-pointwise}
	There exists a cofinal net $(n_j)_j$ in $\bbN$ such that the net $(\lambda^{n_j})_j$ converges to $1$ for each $\lambda \in \bbT$. 
\end{lemma}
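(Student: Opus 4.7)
The plan is to realize the desired cofinal net via recurrence in a suitable topological dynamical system. Consider the compact Hausdorff space $K \coloneqq \bbT^{\bbT}$ endowed with the product topology (compactness is Tychonoff), and the continuous self-map
\begin{align*}
	\varphi\colon K \to K, \quad (\varphi f)(\lambda) \coloneqq \lambda f(\lambda).
\end{align*}
Let $f_0 \in K$ be the constant function $f_0(\lambda) = 1$; then $\varphi^n(f_0)(\lambda) = \lambda^n$ for every $n \in \bbN_0$. Hence the task reduces to producing a cofinal net $(n_j)_j$ in $\bbN$ for which $\varphi^{n_j}(f_0) \to f_0$ in $K$, since convergence in $K$ is exactly pointwise convergence on $\bbT$.

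Next I would pass to a minimal subsystem. Set $Y \coloneqq \overline{\{\varphi^n(f_0)\colon n \in \bbN_0\}}$; this is a compact $\varphi$-invariant subset of $K$, so by the standard Zorn's-lemma argument it contains a non-empty closed $\varphi$-invariant subset $M$ that is minimal. Every point of a minimal dynamical system is recurrent, so we may fix any $g \in M$; by the observation recalled in the excerpt, $g$ is in fact infinitely recurrent. Note that $\modulus{\varphi^n(f_0)(\lambda)} = 1$ for every $n$ and every $\lambda$, so by pointwise convergence we have $\modulus{g(\lambda)} = 1$ for all $\lambda \in \bbT$; in particular, $g(\lambda) \neq 0$.

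Now I would build the cofinal net explicitly. Let $\calU$ be the neighbourhood filter of $g$ in $K$, and direct the set $J \coloneqq \calU \times \bbN$ by $(U,n_0) \preceq (U',n_0')$ iff $U' \subseteq U$ and $n_0 \le n_0'$. By infinite recurrence, for each $j = (U,n_0) \in J$ we may pick $n_j \in \bbN$ with $n_j \ge n_0$ and $\varphi^{n_j}(g) \in U$. Then $(n_j)_{j \in J}$ is cofinal in $\bbN$ (the second coordinate forces $n_j \to \infty$) and $\varphi^{n_j}(g) \to g$ in $K$, that is, $\lambda^{n_j} g(\lambda) \to g(\lambda)$ for every $\lambda \in \bbT$. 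Dividing by $g(\lambda)$ yields $\lambda^{n_j} \to 1$ for each $\lambda \in \bbT$, as required.

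The only subtle point is the transfer from recurrence of $g$ (an element of the closed orbit $Y$) to the convergence statement $\lambda^{n_j} \to 1$ that we actually want at the base point $f_0$. This works because $\varphi$ acts on $Y$ by pointwise multiplication by $\lambda$, so the increments $\lambda^{n_j}$ are the \emph{same} for $g$ and for $f_0$; thus recurrence at any single point of $Y$ already produces the cofinal net we need, and the existence of such a recurrent point is exactly what the minimal subsystem provides.
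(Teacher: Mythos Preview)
Your proof is correct and follows the same overall framework as the paper: the compact space $\bbT^{\bbT}$, the shift map $\varphi(f)(\lambda)=\lambda f(\lambda)$, recurrence, and the same neighbourhood-filter construction of the cofinal net indexed by $\calU\times\bbN$. The one genuine difference is how recurrence of the base point is obtained. The paper observes that $(\bbT^{\bbT},\varphi)$ is a group rotation and cites the fact that every point of a group rotation is recurrent, so $\one$ is recurrent directly. You instead pass to a minimal subsystem of the orbit closure, pick an arbitrary recurrent point $g$ there, and then exploit the identity $\varphi^{n}(g)(\lambda)=\lambda^{n}g(\lambda)$ together with $g(\lambda)\in\bbT$ to transfer recurrence from $g$ to the desired statement $\lambda^{n_j}\to 1$. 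This transfer step is precisely (a special case of) the reason why group rotations have every point recurrent, so in effect you have inlined the proof of the cited result rather than invoking it as a black box; the trade-off is a slightly longer argument in exchange for avoiding the external citation.
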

\begin{proof}
	Endow $G \coloneqq \bbT^\bbT$ with the topology of pointwise convergence and with the pointwise multiplication. Then $G$ is a compact topological group. Set $\one \coloneqq (1)_{\lambda \in \bbT}$ and let $\varphi\colon G \to G$ be given by $\varphi(\mu) = (\lambda \mu_\lambda)_{\lambda \in \bbT}$ for each $\mu = (\mu_\lambda)_{\lambda \in \bbT}$. Then $\varphi$ is continuous and the topological dynamical system $(G; \varphi)$ is a so-called \emph{group rotation}. Hence, by \cite[Proposition~3.12(d)]{Eisner2015} every point in $G$ is recurrent with respect to $(G,\varphi)$ and thus, so is the point $\one$.
	
	Now, let $\calU$ denote the neighbourhood filter of $\one$ in $G$, ordered by converse set inclusion, and endow $\calU \times \bbN$ with the product order, which renders it a directed set. For each pair $(U,k) \in \calU \times \bbN$ we can find a number $n_{(U,k)} \in \bbN$ such that $n_{(U,k)} \ge k$ and $\varphi^{n_{(U,k)}}(\one) \in U$. Hence, the net $\big( \varphi^{n_{(U,k)}}(\one)\big)_{(U,k) \in \calU \times \bbN}$ converges to $\one$ in $G$, which means that $\big( \lambda^{n_{(U,k)}} \big)_{(U,k) \in \calU \times \bbN}$ converges to $1$ for each $\lambda \in \bbT$. Moreover, the net $\big(n_{(U,k)}\big)_{(U,k) \in \calU \times \bbN}$ is clearly cofinal in $\bbN$ by construction.
\end{proof}

Now we can show that, if the semigroup at infinity of a time-discrete operator semigroup $(T^n)_{n \in \bbN_0}$ is non-empty and compact, then there exists a subnet $(T^{n_j})_j$ which converges to $P_\infty$, where $(n_j)_j$ can be chosen independently of the operator $T$ (and also independently of the underlying Banach space).

\begin{proposition} \label{prop:single-net-for-all-operators}
	Let $(n_j)_j$ be a cofinal net in $\bbN$ such that $(\lambda^{n_j})_j$ converges to $1$ for each $\lambda \in \bbT$ (such a net exists according to Lemma~\ref{lem:unit-circle-pointwise}). If $(T^n)_{n \in \bbN_0}$ is a bounded semigroup on a Banach space $E$ whose semigroup at infinity is non-empty and compact, then $(T^{n_j})_j$ converges to $P_\infty$.
\end{proposition}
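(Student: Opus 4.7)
The plan is to split $T^{n_j} = T^{n_j}(I-P_\infty) + T^{n_j} P_\infty$ and handle the two summands separately. By Theorem~\ref{thm:JdLG-semigroup-infinity}(c), $T^n|_{\ker P_\infty}$ converges to $0$ in $\calL(\ker P_\infty)$ as $n \to \infty$; in particular $T^{n_j}(I-P_\infty) \to 0$ in $\calL(E)$ along the cofinal subnet $(n_j)_j$. The task thus reduces to showing $T^{n_j} P_\infty \to P_\infty$ in operator norm on $\calL(E)$, which, since $P_\infty$ has range $E_\infty$ and $P_\infty$ commutes with $T$, is equivalent to $(T|_{E_\infty})^{n_j} \to \id_{E_\infty}$ in $\calL(E_\infty)$.

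First I would establish strong operator convergence $(T|_{E_\infty})^{n_j} \to \id_{E_\infty}$. By Theorem~\ref{thm:JdLG-semigroup-infinity}(e), $E_\infty$ equals the closed linear span of eigenvectors of $(T^n)_{n \in \bbN_0}$ corresponding to unimodular eigenvalues. For any such eigenvector $x$ with $Tx = \lambda x$ and $\lambda \in \bbT$, the defining property of the net $(n_j)_j$ immediately gives $T^{n_j} x = \lambda^{n_j} x \to x$. Since Theorem~\ref{thm:JdLG-semigroup-infinity}(b) identifies the closure $\calG \coloneqq \overline{\{T^n|_{E_\infty}\colon n \in \bbN_0\}}^{\calL(E_\infty)}$ as a compact group (so in particular uniformly bounded in operator norm), a routine density and equicontinuity argument promotes this pointwise convergence on eigenvectors to strong operator convergence on all of $E_\infty$.

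Finally I would upgrade strong convergence to operator norm convergence by invoking the compactness of $\calG$. The net $\bigl((T|_{E_\infty})^{n_j}\bigr)_j$ lies in the norm-compact set $\calG$, so every subnet has a further subnet that converges in operator norm; any such norm limit $S \in \calG$ is also a strong limit along that subnet. By the previous step, the only possible strong limit is $\id_{E_\infty}$, hence $S = \id_{E_\infty}$. Thus $\id_{E_\infty}$ is the unique cluster point of $\bigl((T|_{E_\infty})^{n_j}\bigr)_j$ inside the compact Hausdorff space $\calG$, which forces $(T|_{E_\infty})^{n_j} \to \id_{E_\infty}$ in operator norm.

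The only delicate point I anticipate is precisely this passage from strong to norm convergence on $E_\infty$, and it is exactly where the compactness supplied by the semigroup-at-infinity structure becomes indispensable; without it one could at best extract strong convergence from the eigenvector decomposition, which would be insufficient for the norm convergence claimed in the proposition.
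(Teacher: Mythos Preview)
Your argument is correct. Both proofs begin with the same splitting $T^{n_j} = T^{n_j}(\id_E - P_\infty) + T^{n_j} P_\infty$ and dispose of the first summand via Theorem~\ref{thm:JdLG-semigroup-infinity}(c). The difference lies in the treatment of $T|_{E_\infty}$. The paper invokes Proposition~\ref{prop:compactness-for-single-operators} to identify $P_\infty$ with the spectral projection of $T$ for $\spec(T)\cap\bbT = \{\lambda_1,\dots,\lambda_m\}$, so that $E_\infty = P_1E \oplus \dots \oplus P_mE$ and $T$ acts on $E_\infty$ as multiplication by the finite tuple $(\lambda_1,\dots,\lambda_m)$; operator norm convergence $(T|_{E_\infty})^{n_j} \to \id_{E_\infty}$ is then immediate from $\lambda_k^{n_j} \to 1$ for each of the finitely many $k$. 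You instead use Theorem~\ref{thm:JdLG-semigroup-infinity}(e) to obtain a dense span of unimodular eigenvectors, derive strong convergence, and then exploit the norm compactness of the group $\calG$ from Theorem~\ref{thm:JdLG-semigroup-infinity}(b) to upgrade to norm convergence. The paper's route is shorter and avoids the strong-to-norm step, because the finite spectral decomposition already delivers norm convergence directly; your route is ``softer'' and would in principle survive in situations where one does not know the unimodular spectrum to be finite but still has the compact group structure. One small point: Theorem~\ref{thm:JdLG-semigroup-infinity}(e) is stated only for complex scalars, so you should begin (as the paper does) by passing to a complexification of $E$.
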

\begin{proof}
	We may assume throughout the proof that the scalar field is complex, since otherwise we may replace $E$ with a complexification. We know from Proposition~\ref{prop:compactness-for-single-operators} that $P_\infty$ is the spectral projection of $T$ associated with $\spec(T) \cap \bbT$. Since the net $(n_j)_j$ is cofinal in $\bbN$, Theorem~\ref{thm:JdLG-semigroup-infinity}(c) yields $(T|_{\ker P_\infty})^{n_j} \to 0$. Moreover, $E_\infty$ can be decomposed as
	\begin{align*}
		E_\infty = P_1 E \oplus \dots \oplus P_m E,
	\end{align*}
	where $m \in \bbN_0$, $\spec(T) \cap \bbT = \{\lambda_1,\dots,\lambda_m\}$ and $P_1,\dots,P_m$ are the spectral projections associated with the single spectral values $\lambda_1,\dots, \lambda_m$. The operator $T$ acts on the space $E_\infty$ as the multiplication with the tuple $(\lambda_1,\dots,\lambda_m)$, so it follows readily that $(T|_{E_\infty})^{n_j} \to \id_{E_\infty}$.
\end{proof}

\begin{proof}[Proof of Theorem~\ref{thm:from-subsemigroups-to-the-semigroup-via-strong-continuity}]
	We may assume throughout the proof that $E$ is a complex Banach space since we can otherwise replace $E$ with a complexification. 
	
	(i) $\Leftrightarrow$ (iii): This equivalence follows from Proposition~\ref{prop:compactness-for-single-operators}.
	
	(ii) $\Rightarrow$ (i): This implication follows from Corollary~\ref{cor:subsemigroup-non-empty-and-compact}. 
	
	(i) $\Rightarrow$ (ii): For each $s \in (0, \infty)$ denote by $P_{\infty,s}$ the projection at infinity that belongs to the semigroup $(T_{ns})_{n \in \bbN_0}$; then $P_{\infty,s}$ is also the spectral projection of $T_s$ that belongs to the intersection of the spectrum with the unit circle. Let $(n_j) \subseteq \bbN$ be a cofinal net with the property asserted in Lemma~\ref{lem:unit-circle-pointwise}. According to Proposition~\ref{prop:single-net-for-all-operators} we have $T_{n_js} = T_s^{n_j} \to P_{\infty, s}$ for each $s \in (0,\infty)$, which implies that the operator family $(P_{\infty, s})_{s \in (0,\infty)}$ satisfies the semigroup law. This in turn implies that all the projections $P_{\infty,s}$ coincide (see \cite[Lemma~2.2]{GerlachLB}); from now on, we set $P \coloneqq P_{s,\infty}$ for all $s \in (0,\infty)$. Since all operators $T_s$ commute with $P$, our the semigroup $(T_s)_{s \in [0,\infty)}$ leaves both $\ker P$ and $PE$ invariant. It remains to prove that $P$ satisfies the conditions~(a) and~(b) of Proposition~\ref{prop:uniqueness-of-proposition}:

	(a) It is an immediate consequence of Proposition~\ref{prop:bounded-convergence} that $(T_s|_{\ker P})_{s \in [0,\infty)}$ converges to $0$ as, for instance, the powers of $T_1|_{\ker P} = T_1|_{\ker P_{\infty,1}}$ converge to $0$.

	(b) It follows from Theorem~\ref{thm:JdLG-semigroup-infinity}(b) that, for each $s \in (0,\infty)$, the operator $T_s|_{PE}$ is invertible on $PE$. Hence, the semigroup $(T_s|_{PE})_{s \in [0, \infty)}$ extends to a group on $PE$. Since the semigroup is strongly continuous at at least one time, it thus follows that it is strongly continuous at all times $s \in [0,\infty)$. Let $A$ denote the generator of the $C_0$-semigroup $(T_s|_{PE})_{s \in [0, \infty)}$.
	
	Let us show that the operator $A$ has at most finitely many eigenvalues on the imaginary axis. So assume to the contrary that the set $\ui B \coloneqq \spec_\pnt(A) \cap \ui \bbR$ is infinite. Choose two time $s,t \in (0,\infty)$ such that $s/t$ is irrational. Since $\ue^{\ui tB}$ consists of unimodular eigenvalues of $T_t|_{PE}$, it follows that this set is finite. Hence, there exists an infinite subset $\ui C$ of $\ui B$ whose values are all mapped to the same number by the mapping $\exp(\argument t)$. Thus, $t(c_1-c_2) \in 2\pi \bbZ$ for all $c_1,c_2 \in C$. Consequently, $s(c_1-c_2) = \frac{s}{t}t(c_1-c_2) \not\in 2\pi \bbZ$ for any two distinct $c_1,c_2 \in C$, which conversely implies that all the values $\ue^{\ui sc}$ are distinct for $c \in C$. However, each such number is an eigenvalue of $T_s|_{PE}$; this is a contradiction since $T_s|_{PE}$ has only finitely many eigenvalues.
	
	Let $\ui \beta_1, \dots, \ui \beta_n$ denote the eigenvalues of $A$ on the imaginary axis (at least one such eigenvalue exists unless $PE = \{0\}$) and denote their corresponding eigenspaces by $E_1,\dots,E_n$. We note that $PE = E_1 \oplus \dots \oplus E_n$. To see this, choose a sufficiently small number $s_0 \in (0,\infty)$ such that all the numbers $\ue^{\ui s_0\beta_1},\dots, \ue^{\ui s_0\beta_n}$ are distinct. Then, for each $k\in \{1,\dots,n\}$, the space $E_k$ is the eigenspace of $T_{s_0}$ for the eigenvalue $\ue^{\ui s_0\beta_k}$ \cite[Corollary~IV.3.8(ii)]{Engel2000}. Consequently, $E_k$ is even the spectral space of $T_{s_0}$ for the spectral value $\ue^{\ui s_0\beta_k}$ since the latter number is a first order pole of the resolvent of $T_{s_0}$ (as $T_{s_0}$ is power-bounded). Moreover, $P$ is the spectral projection of $T_{s_0}$ corresponding to the part $\spec(T_{s_0}) \cap \bbT = \{\ue^{\ui s_0\beta_1},\dots,\ue^{\ui s_0\beta_n}\}$ of the spectrum, so indeed
	\begin{align*}
		PE = \ker(\ue^{\ui s_0\beta_k} - T_{s_0}) \oplus \dots \oplus \ker(\ue^{\ui s_0\beta_k} - T_{s_0}) = E_1 \oplus \dots \oplus E_n.
	\end{align*}
	As the semigroup $(T_s|_{PE})_{s \in [0,\infty)}$ acts on $E_k$ as the multiplication with $(\ue^{\ui s \beta_k})_{s \in [0,\infty)}$, it follows that $\{T_s|_{PE} \colon s \in [0, \infty) \}$ is relatively compact in $\calL(PE)$. 
\end{proof}

\section{Triviality of compact operator groups} \label{section:triviality-of-compact-operator-groups}

Loosely speaking, the major theoretical consequence of Corollary~\ref{cor:characterization-of-sg-convergence} is that, if one would like to find sufficient criteria for an operator semigroup to converge with respect to the operator norm, then one should seek for criteria which ensure that a compact operator group is trivial. This is the purpose of the present section.

\subsection{Divisible groups and a spectral condition} \label{subsection:divisible-groups-and-a-spectral-condition}

Recall that a compact topological group $G$ is called \textit{divisible} if for each $g \in G$ and each $n \in \bbN$, there exists $h \in G$ such that $h^n = g$. We start with a theorem on the triviality of divisible compact groups of linear operators. The corollaries of this theorem that are listed at the end of this subsection will be powerful tools in Section~\ref{section:operator-norm-convergence-of-semigroups} when we finally prove various concrete convergence theorems for operator semigroups.

It is worthwhile to note that a compact topological group $G$ is divisible if and only if $G$ is connected (see \cite[Corollary~2]{Mycielski1958} or, for the special case where $G$ is commutative, \cite[assertions~(a) and~(b) on p.\,55]{Kaplansky1954}).

\begin{theorem} \label{thm:trivial-group-by-rational-spectrum}
	Let $E$ be a complex Banach space and let $\calG \subseteq \calL(E)$ be a divisible and compact subgroup of the invertible linear operators on $E$. If, for each $T \in \calG$, all spectral values of $T$ are roots of unity, then $\calG = \{\id_E\}$.
\end{theorem}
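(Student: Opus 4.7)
The plan is first to reduce to the case that $\calG$ is abelian, and then to exploit the Haar measure on $\calG$ together with its continuous characters to force every operator in $\calG$ to coincide with the identity.

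For the reduction, fix an arbitrary $T \in \calG$. Using divisibility and a Tychonoff-style inverse-limit argument I would construct a coherent system $(S_n)_{n \in \bbN}$ in $\calG$ with $S_n^n = T$ and $S_{nm}^m = S_n$ for all $m, n \in \bbN$: each set $X_n := \{S \in \calG : S^n = T\}$ is non-empty by divisibility and closed in $\calG$, hence compact, and any finite compatibility requirement is met by restrictions of a single element $S_N \in X_N$ for a common multiple $N$. The assignment $p/q \mapsto S_q^p$ then yields a group homomorphism $\phi \colon \bbQ \to \calG$ with $\phi(1) = T$, and its closure $H := \overline{\phi(\bbQ)}$ is a compact abelian subgroup of $\calG$ that contains $T$. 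Approximating any $h \in H$ by $\phi(r_k)$ and extracting a cluster point of $\phi(r_k/n) \in H$ via compactness shows that $H$ itself is divisible, and since $H \subseteq \calG$ the spectral hypothesis transfers to $H$. It therefore suffices to show $H$ is trivial, so from now on I may assume that $\calG$ itself is abelian.

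In the abelian case let $\mu$ be the Haar probability measure on $\calG$ and, for each continuous character $\chi \colon \calG \to \bbT$, set
\[
P_\chi x \,=\, \int_{\calG} \overline{\chi(T)}\, T x \, d\mu(T), \qquad x \in E.
\]
A routine calculation using left-invariance of $\mu$ shows that $P_\chi$ is a bounded projection whose range is $E_\chi := \{x \in E : T x = \chi(T) x \text{ for all } T \in \calG\}$. The key dichotomy is the following: if $E_\chi \neq \{0\}$, then any non-zero $x \in E_\chi$ witnesses $\chi(T) \in \sigma(T)$ for every $T \in \calG$, so $\chi(\calG)$ lands in the countable set of roots of unity. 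On the other hand, $\chi$ is continuous on the divisible (hence connected) compact group $\calG$, so $\chi(\calG)$ is a connected subgroup of $\bbT$, i.e.\ either $\{1\}$ or all of $\bbT$; the former is the only option compatible with being inside a totally disconnected countable set. Hence $\chi \equiv 1$, and I conclude that $P_\chi = 0$ for every non-trivial character $\chi$.

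To finish, fix $x \in E$ and consider the continuous $E$-valued function $f \colon \calG \to E$, $f(T) = Tx$; its Fourier coefficients $\int_{\calG} \overline{\chi(T)}\, f(T)\, d\mu(T)$ are precisely $P_\chi x$, which vanish whenever $\chi \neq \one$. Composing with any $\varphi \in E'$ gives a continuous scalar function on $\calG$ with the same vanishing Fourier coefficients, which by the Stone--Weierstrass theorem (the characters densely span $C(\calG;\bbC)$) must be constant. Thus $\varphi(Tx) = \varphi(x)$ for all $T \in \calG$, all $\varphi \in E'$, and all $x \in E$, and Hahn--Banach then yields $T = \id_E$ for every $T \in \calG$. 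The main subtlety I anticipate is the reduction step: the subgroup $\phi(\bbQ)$ is typically neither closed nor divisible on its own, so divisibility has to be extracted from its closure $H$ by an approximation argument combining density of $\phi(\bbQ)$ in $H$ with compactness of $H$ to produce $n$-th roots inside $H$.
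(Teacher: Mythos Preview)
Your argument is correct, but it diverges from the paper's proof in both stages. For the reduction to the abelian case, the paper simply cites Mycielski's theorem that every element of a divisible compact group lies in a closed divisible commutative subgroup, whereas you reprove this by hand via the inverse-limit construction of a coherent root system $\phi\colon\bbQ\to\calG$ and then pass to $\overline{\phi(\bbQ)}$; your approximation argument for the divisibility of the closure is fine. The more substantial difference is in the abelian case: the paper embeds $\calG$ into a maximal commutative subalgebra $\calB$ of $\calL(E)$, uses that the Gelfand spectrum of $T$ in $\calB$ coincides with $\spec(T)$, deduces $\spec(T)=\{1\}$ from the fact that every Gelfand character sends $\calG$ to a connected subgroup of $\bbT$ contained in the roots of unity, and concludes via Gelfand's $T=\id$ theorem. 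You instead run a Haar-measure/Peter--Weyl argument: the projections $P_\chi$ pick out eigenspaces, non-trivial characters with $E_\chi\neq\{0\}$ would map $\calG$ into roots of unity yet have connected image, so all non-trivial $P_\chi$ vanish, and then the vanishing of all non-trivial Fourier coefficients of $T\mapsto Tx$ forces $Tx\equiv x$. Both routes hinge on the same core observation (a connected subgroup of $\bbT$ inside the roots of unity is trivial); the paper's version is shorter and leans on Banach-algebra machinery plus Gelfand's $T=\id$ theorem, while yours is more self-contained in that it avoids both the maximal-subalgebra lemma and Gelfand's theorem at the cost of importing Haar integration and completeness of characters in $L^2(\calG)$.
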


For the proof of Theorem~\ref{thm:trivial-group-by-rational-spectrum} we need a bit of Banach algebra theory, specifically the following lemma. For the convenience of the reader, we include its simple proof.

\begin{lemma} \label{lemma:maximal-commutative-subalgebra}
	Let $\calA$ be a complex Banach algebra with multiplicatively neutral element $1$ and let $\calB \subseteq \calA$ be a commutative subalgebra which is maximal among all commutative subalgebras of $\calA$. Then the following assertions hold:
	\begin{enumerate}[\upshape (i)]
		\item $\calB$ is closed and contains $1$.
		\item For each $b \in \calB$ its spectrum in $\calA$ coincides with its spectrum in $\calB$. 
	\end{enumerate}
\end{lemma}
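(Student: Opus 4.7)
The plan is to exploit the maximality of $\calB$ in two ways: first to close $\calB$ under operations that obviously preserve commutativity (taking closures and adjoining $1$), and second to show that inverses in $\calA$ of elements of $\calB$ must lie in $\calB$ again.

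For assertion (i), I would first adjoin $1$: the linear span $\calB + \bbC \cdot 1$ is still a commutative subalgebra of $\calA$ since $1$ commutes with every element, so maximality forces $1 \in \calB$. For closedness, I would pass to $\overline{\calB}$ and use joint continuity of multiplication: if $a_n, b_n \in \calB$ converge to $a, b \in \overline{\calB}$, then passing to the limit in $a_n b_n = b_n a_n$ shows $ab = ba$. Since $\overline{\calB}$ is also a subalgebra (multiplication and addition are continuous), maximality gives $\overline{\calB} = \calB$.

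For assertion (ii), the inclusion $\sigma_\calA(b) \subseteq \sigma_\calB(b)$ is automatic: invertibility in $\calB$ implies invertibility in $\calA$ with the same inverse. For the nontrivial inclusion, I would fix $\lambda \in \bbC \setminus \sigma_\calA(b)$ and let $c = (\lambda - b)^{-1} \in \calA$. The key step is to check that $c$ commutes with every $x \in \calB$. This follows from the standard inverse trick: since $b$ and hence $\lambda - b$ commute with $x$, multiplying the identity $(\lambda - b)x = x(\lambda - b)$ on both sides by $c$ gives $xc = cx$. Consequently the subalgebra generated by $\calB \cup \{c\}$ is still commutative, and maximality forces $c \in \calB$. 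Therefore $\lambda \in \bbC \setminus \sigma_\calB(b)$, which proves $\sigma_\calB(b) \subseteq \sigma_\calA(b)$.

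The main (mild) obstacle is the inverse-commutation step in~(ii); everything else is a direct application of maximality. Note that no spectral-theoretic machinery beyond the definition is needed, so the proof is short and self-contained.
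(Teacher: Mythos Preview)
Your proof is correct and follows essentially the same route as the paper: both parts are obtained directly from maximality, and for~(ii) the key step---that $(\lambda-b)^{-1}$ commutes with every element of $\calB$ and hence lies in $\calB$ by maximality---is exactly the paper's argument. The only cosmetic difference is that the paper writes the enlarged algebra explicitly as the span of $\{(\lambda-b)^{-n}\colon n\in\bbN_0\}\cdot\calB$, whereas you speak of the subalgebra generated by $\calB\cup\{c\}$; these are the same object.
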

\begin{proof}
	(i) This follows immediately from the maximality of $\calB$.

	(ii) Fix $b \in B$. Clearly, the spectrum of $b$ in $\calA$ is contained in the spectrum of $b$ in $\calB$. To show the converse inclusion, let $\lambda$ be in the resolvent set of $b$ with respect to $\calA$. Observe that the inverse $(\lambda - b)^{-1}$ commutes with all elements in $\calB$. Therefore, the linear span of the set
\begin{align*}
	\{(\lambda - b)^{-n} \colon n \in \bbN_0 \} \cdot \calB
\end{align*}
is a commutative subalgebra of $\calA$ that contains $\calB$ and thus coincides with $\calB$. Hence, $(\lambda - b)^{-1} \in \calB$, i.e., $\lambda$ is contained in the resolvent set of $b$ in $\calB$.
\end{proof}

Note that if $\calG \subseteq \calL(E)$ is a compact subgroup of the invertible linear operators on a complex Banach space $E$, then $\sup_{n \in \bbZ} \norm{T^n} < \infty$ for all $T \in \calG$, i.e., each operator in $\calG$ is \emph{doubly power-bounded}. After these preparations, Theorem~\ref{thm:trivial-group-by-rational-spectrum} can be proved.

\begin{proof}[Proof of Theorem~\ref{thm:trivial-group-by-rational-spectrum}]
	According to \cite[Corollary~1 and Corollary~2]{Mycielski1958} every element of a divisible compact group is contained in a divisible commutative (and closed) subgroup, so it suffices to prove the assertion for commutative $\calG$.
	
	Let $\calB$ be a subalgebra of $\calL(E)$ which is maximal among all commutative subalgebras of $\calL(E)$ that contain $\calG$ (such a $\calB$ exists by Zorn's lemma). Then $\calB$ is also maximal among all commutative subalgebras of $\calL(E)$, so according to Lemma~\ref{lemma:maximal-commutative-subalgebra}, $\calB$ is closed and contains $\id_E$; moreover, for each $T \in \calB$ the spectrum $\spec(T)$ of $T$ in $\calL(E)$ and its spectrum in $\calB$ coincide. Hence, if $\Omega(\calB)$ denotes the character space of the Banach algebra $\calB$, then we have
	\begin{align*}
		\spec(T) = \{\varphi(T)\colon \varphi \in \Omega(\calB)\};
	\end{align*}
	see e.g.~\cite[Theorem~1.3.4(1)]{Murphy1990}. Since each $\varphi \in \Omega(\calB)$ is a continuous group homomorphism, it follows that $\varphi(\calG)$ is a divisible and compact subgroup of $\bbT$ for each character $\varphi$. On the other hand, it follows from our spectral assumption that $\varphi(\calG)$ consists of roots of unity only; consequently, $\varphi(\calG) = \{1\}$. We conclude that $\spec(T) = \{1\}$ for each $T \in \calG$, so each such $T$ equals $\id_E$ by Gelfand's $T = \id$ theorem since $T$ is doubly power-bounded (see e.g.\ \cite[Theorem~B.17]{Engel2000}).
\end{proof}

The Banach algebra argument used in the previous proof is a common technique in the spectral analysis of operator semigroups; related arguments can, for instance, be found in \cite[Chapter~XVI]{Hille1957} and \cite[Section~4.7]{Blake1999}.

The condition that all spectral values of any $T \in \calG$ are roots of unity is automatically satisfied in two important situations. The first one is that the underlying space is a Banach lattice and all operators in $\calG$ are positive; this is the content of the following corollary.

\begin{corollary} \label{cor:trivial-group-positive}
	Let $E$ be a Banach lattice and let $\calG \subseteq \calL(E)$ be a divisible and compact subgroup of the invertible bounded linear operators on $E$ such that each operator in $\calG$ is positive. Then $\calG = \{\id_E\}$.
\end{corollary}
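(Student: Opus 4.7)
The plan is to verify the spectral hypothesis of Theorem~\ref{thm:trivial-group-by-rational-spectrum}: once every $T \in \calG$ has all its spectral values equal to roots of unity, that theorem yields the corollary at once. After passing to a complexification if $E$ is real (each positive operator on $E$ extends canonically to a positive operator on the complex Banach lattice $E_\bbC$, and the extended family is a compact divisible subgroup of invertible positive operators there), I would fix $T \in \calG$ and observe that both $T$ and $T^{-1}$ lie in the compact group $\calG$, so $T$ is doubly power-bounded and $\spec(T) \subseteq \bbT$. The orbit $\{T^n : n \in \bbN_0\}$ is contained in the compact set $\calG$ and is therefore relatively compact in $\calL(E_\bbC)$, so by Lemma~\ref{lemma:compactness-discrete-case} the semigroup at infinity associated with $(T^n)_{n \in \bbN_0}$ is non-empty and compact; Proposition~\ref{prop:compactness-for-single-operators} then guarantees that every element of $\spec(T) \cap \bbT$ is a pole of the resolvent of $T$. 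Combined with $\spec(T) \subseteq \bbT$, this shows that $\spec(T)$ consists of isolated poles inside the compact set $\bbT$ and is in particular finite.

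Since $T^{-1} \in \calG$ is also positive, $T$ is a Banach-lattice automorphism of $E_\bbC$. At this point I would invoke the classical cyclicity theorem for the peripheral spectrum of positive operators on complex Banach lattices (see, e.g., \cite[Chapter~V]{Schaefer1974}): because $r(T) = 1$ is a pole of the resolvent of the positive operator $T$, the set $\spec(T) \cap \bbT$ consists of roots of unity. Since $\spec(T) = \spec(T) \cap \bbT$, every spectral value of $T$ is then a root of unity, and Theorem~\ref{thm:trivial-group-by-rational-spectrum} delivers $\calG = \{\id_E\}$.

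The main obstacle is this final cyclicity step: nothing in the preceding sections directly constrains the peripheral spectrum of a positive operator beyond the finiteness we have just obtained, so one has to import the (non-trivial) Niiro--Sawashima/Schaefer cyclicity result from the theory of positive operators as an external input. Everything else in the argument is a routine assembly of results established earlier in the paper.
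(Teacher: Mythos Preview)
Your proposal is correct and follows essentially the same route as the paper: complexify, use Lemma~\ref{lemma:compactness-discrete-case} and Proposition~\ref{prop:compactness-for-single-operators} to see that $\spec(T)$ is a finite set of poles on $\bbT$, then invoke the cyclicity of the peripheral spectrum of a positive operator (the paper cites \cite[Theorem~4.7]{Lotz1968} or \cite[Theorem~V.4.9]{Schaefer1974}) to conclude that all spectral values are roots of unity, and finish with Theorem~\ref{thm:trivial-group-by-rational-spectrum}. One small point of phrasing: the cyclicity theorem itself only tells you that $\lambda \in \spec(T)$ implies $\lambda^n \in \spec(T)$ for all $n \in \bbZ$; it is the combination of this with the finiteness of $\spec(T)$ (which you established just before) that forces each $\lambda$ to be a root of unity, so make that deduction explicit rather than folding it into the citation.
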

\begin{proof}
	One may assume that the scalar field is complex. According to Theorem~\ref{thm:trivial-group-by-rational-spectrum} it suffices to show that the spectrum of each $T \in \calG$ consists of roots of unity only, so fix $T \in \calG$. Clearly, $\spec(T) \subseteq \bbT$, so it follows from Lemma~\ref{lemma:compactness-discrete-case} and Proposition~\ref{prop:compactness-for-single-operators} that $\spec(T)$ is finite and consists of poles of the resolvent.
	
	It follows from infinite-dimensional Perron--Frobenius theory that the spectrum of $T$ is \emph{cyclic}, meaning that $\lambda^n \in \spec(T)$ for all $n \in \bbZ$ whenever $\lambda \in \spec(T)$ (see \cite[Theorem~4.7]{Lotz1968} or \cite[Theorem~V.4.9]{Schaefer1974}). By the finiteness of the spectrum, this implies that $\spec(T)$ consists of roots of unity only.
\end{proof}

Before we continue with a second situation where Theorem~\ref{thm:trivial-group-by-rational-spectrum} can be applied, let us briefly discuss another possibility to derive the corollary above. The following theorem also contains Corollary~\ref{cor:trivial-group-positive} as a special case since every finite divisible group consists of one element only. The theorem and its proof were communicated to us by Rainer Nagel, according to whom the result goes originally back to Heinrich P.\ Lotz. We could not find a concrete reference for it in the literature, though.

\begin{theorem}
	Let $E$ be a (real or complex) Banach lattice and let $\calG \subseteq \calL(E)$ be a compact subgroup of the invertible bounded linear operators on $E$ such that each operator in $\calG$ is positive. Then $\calG$ is finite.
\end{theorem}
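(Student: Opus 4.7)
The plan is to show that $\calG \setminus \{\id_E\}$ is bounded away from $\id_E$ in the operator norm by a fixed positive distance, which will make $\calG$ discrete and, combined with compactness, finite. By passing to a complexification, I may assume $E$ is a complex Banach lattice. Since $\calG$ is a compact group, each $T \in \calG$ satisfies $\sup_{n \in \bbZ} \norm{T^n} < \infty$ and is thus doubly power-bounded, so $\spec(T) \subseteq \bbT$. Moreover $T^{-1} \in \calG$ is positive as well, so every element of $\calG$ is a positive lattice isomorphism.

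The first key step is to prove that $\spec(T)$ is finite for each $T \in \calG$. Since $\{T^n \colon n \in \bbN_0\} \subseteq \calG$ is relatively compact in $\calL(E)$, Lemma~\ref{lemma:compactness-discrete-case} implies that the semigroup at infinity of $(T^n)_{n \in \bbN_0}$ is non-empty and compact, and then Proposition~\ref{prop:compactness-for-single-operators} guarantees that every spectral value of $T$ on $\bbT$ is a pole of $\Res(\argument, T)$, hence an isolated point of $\spec(T)$. Because $\spec(T) \subseteq \bbT$ is compact and consists of isolated points, it must be finite. The second key ingredient is Perron--Frobenius cyclicity, invoked exactly as in the proof of Corollary~\ref{cor:trivial-group-positive}: $\lambda^k \in \spec(T)$ for all $k \in \bbZ$ whenever $\lambda \in \spec(T)$. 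Consequently, if $T \neq \id_E$, Gelfand's $T = \id$ theorem forces $\spec(T) \neq \{1\}$, and cyclicity implies that $\spec(T)$ contains the full group $\mu_n \subseteq \bbT$ of $n$-th roots of unity for some integer $n \geq 2$, namely $n$ equal to the order of any $\lambda \in \spec(T) \setminus \{1\}$.

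For the final step I use an elementary observation about roots of unity: for every $n \geq 2$ the set $\mu_n$ contains an element $\mu$ with $\modulus{\mu - 1} \geq \sqrt{3}$ (take the element whose argument is closest to $\pi$; the worst case $n = 3$ gives exactly $\sqrt{3}$). Since the operator norm dominates the modulus of every spectral value, this yields $\norm{T - \id_E} \geq \sqrt{3}$ for every $T \in \calG \setminus \{\id_E\}$. Hence $\{\id_E\}$ is open in $\calG$; by translation every singleton is open, so $\calG$ is discrete, and a compact discrete space is finite. The step I expect to need the most care is verifying that the cyclicity result applies in the form used here -- that a single $\lambda \in \spec(T) \setminus \{1\}$ forces the entire cyclic subgroup $\langle \lambda \rangle$ into $\spec(T)$, with no irreducibility assumption needed. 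This is exactly the form already invoked in Corollary~\ref{cor:trivial-group-positive} and is applicable here because the peripheral spectrum coincides with the whole spectrum and consists of poles of the resolvent; without full cyclicity, an eigenvalue close to $1$ could coexist with $T$ being close to $\id_E$, and the discreteness argument would collapse.
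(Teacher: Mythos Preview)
Your proof is correct and follows essentially the same strategy as the paper's: show that $\id_E$ is isolated in $\calG$ by bounding $\norm{T-\id_E}$ from below for every $T\neq\id_E$, using Gelfand's $T=\id$ theorem together with cyclicity of the peripheral spectrum of a positive operator. The only difference is a minor detour: you first establish finiteness of $\spec(T)$ (via the semigroup-at-infinity machinery) so that every spectral value is a root of unity, and then extract the constant $\sqrt{3}$; the paper skips this step and observes directly that cyclicity forces some spectral value into the left half-plane (regardless of whether the spectrum is finite), yielding the constant $\sqrt{2}$.
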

\begin{proof}
	We may assume that the underlying scalar field is complex. Since $\calG$ is compact, it suffices to show that every element in $\calG$ is isolated, and to this end it suffices to prove that $I$ is isolated. 
	
	Now, fix $T \in \calG \setminus \{I\}$. Since $T$ is doubly power bounded, the spectrum of $T$ is a subset of the complex unit circle. Moreover, the spectrum cannot consist of the number $1$ only, since this would imply $T = \id$ by Gelfand's $T = \id$ theorem \cite[Theorem~B.17]{Engel2000}. Since the peripheral spectrum (which is the spectrum) of $T$ is cyclic (see \cite[Theorem~4.7]{Lotz1968} or \cite[Theorem~V.4.9]{Schaefer1974}), there exists a spectral value $\lambda$ of $T$ with negative real part. In particular, the spectral value $\lambda - 1$ of the operator $T-\id$ has modulus at least $\sqrt{2}$, so
	\begin{align*}
		\norm{T - \id} \ge r(T-\id) \ge \modulus{\lambda - 1} \ge \sqrt{2}.
	\end{align*}
	This shows that every operator in $\calG \setminus \{\id\}$ has distance at least $\sqrt{2}$ from $\id$, so $\id$ is indeed isolated in $\calG$.
\end{proof}

After this brief intermezzo, let us continue to discuss consequences of Theorem~\ref{thm:trivial-group-by-rational-spectrum}. Our next corollary deals with the case of contractive operators on so-called \emph{projectively non-Hilbert spaces}. This notion is taken from \cite[Definition~3.1]{Glueck2016b}; a real Banach space $E$ is called \emph{projectively non-Hilbert}, if for no rank-$2$ projection $P \in \calL(E)$, the range $PE$ is isometrically a Hilbert space. Every real-valued $L^p$-space over an arbitrary measure space is projectively non-Hilbert if $p \in [1,\infty] \setminus \{2\}$, see \cite[Example~3.2]{Glueck2016b} and the discussion after \cite[Example~3.5]{Glueck2016b}. Moreover, every real Banach lattice that is a so-called \emph{AM}-space is projectively non-Hilbert \cite[Example~1.2.7]{GlueckDISS}; this includes the space of real-valued bounded and continuous functions on any topological space.

\begin{corollary} \label{cor:trivial-group-contractive}
	Let $E$ be a real Banach space that is projectively non-Hilbert and let $\calG \subseteq \calL(E)$ be a divisible and compact subgroup of the invertible bounded linear operators on $E$ such that each operator in $\calG$ is contractive. Then $\calG = \{\id_E\}$.
\end{corollary}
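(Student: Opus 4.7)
The plan is to reduce the claim to Theorem~\ref{thm:trivial-group-by-rational-spectrum} by showing that, after complexification, every $T \in \calG$ has spectrum consisting only of roots of unity. Since each $T \in \calG$ is contractive and its inverse $T^{-1}$ also lies in $\calG$ and is contractive, every $T$ is a surjective isometry of $E$. Passing to a complexification, $\calG_\bbC \coloneqq \{T_\bbC : T \in \calG\}$ is a divisible, compact group of surjective isometries of $E_\bbC$, and $\sigma(T_\bbC) \subseteq \bbT$ for each $T$. By Theorem~\ref{thm:trivial-group-by-rational-spectrum} applied to $\calG_\bbC$, it suffices to exclude spectral values $\lambda \in \sigma(T_\bbC)$ that are not roots of unity; such a $\lambda$ satisfies $\lambda \notin \bbR$ and $\theta \coloneqq \arg \lambda$ satisfies $\theta / (2\pi) \notin \bbQ$.

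Suppose such a $\lambda$ exists. Since $\{T^n : n \in \bbZ\} \subseteq \calG$ is relatively compact in the operator norm, the uniform mean ergodic projection $P_\lambda \coloneqq \lim_N \frac{1}{N}\sum_{n=0}^{N-1} \lambda^{-n} T_\bbC^n$ converges in operator norm and projects onto $\ker(T_\bbC - \lambda)$, which is thus nonzero. Pick an eigenvector $v = x + \ui y$ with $x, y \in E$. Because $\lambda \neq \bar\lambda$, the vectors $x$ and $y$ are linearly independent, so $V \coloneqq \lin_\bbR \{x, y\}$ is a $2$-dimensional real $T$-invariant subspace, and in the basis $(x, y)$ the restriction $T|_V$ acts as the rotation of angle $\theta$. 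Since $\theta / (2\pi) \notin \bbQ$, the iterates $\{T|_V^n : n \in \bbZ\}$ are dense in the set of rotations $\{R_\phi : \phi \in \bbT\}$ in this basis, and each $R_\phi$ is an operator-norm limit of isometries of $V$ and hence itself an isometry. Applying $R_\phi$ to $\cos\psi \cdot x + \sin\psi \cdot y$ shows that $\norm{\cos\psi \cdot x + \sin\psi \cdot y}_V$ is independent of $\psi$, whence $\norm{\alpha x + \beta y}_V = \norm{x}_V \sqrt{\alpha^2 + \beta^2}$ for all $\alpha, \beta \in \bbR$; thus $V$ is linearly isometric to the $2$-dimensional real Hilbert space.

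The main obstacle is to upgrade the merely $T$-invariant subspace $V$ to the range of a bounded \emph{projection} of $E$, since the projectively non-Hilbert hypothesis forbids Hilbert ranges only of such projections. To this end I compose $P_\lambda$ with a Hahn--Banach rank-$1$ projection $\ker(T_\bbC - \lambda) \to \bbC v$, obtaining a bounded rank-$1$ projection $Q_v \colon E_\bbC \to \bbC v$. Letting $C$ denote complex conjugation on $E_\bbC$, the operator $Q_v + C Q_v C$ is a bounded rank-$2$ projection of $E_\bbC$ onto $\bbC v \oplus \bbC \bar v$ that commutes with $C$, and therefore restricts to a bounded rank-$2$ projection of $E$ onto $(\bbC v \oplus \bbC \bar v) \cap E = V$. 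This contradicts the projectively non-Hilbert assumption, so every spectral value of every $T_\bbC$ is a root of unity, and Theorem~\ref{thm:trivial-group-by-rational-spectrum} yields $\calG = \{\id_E\}$.
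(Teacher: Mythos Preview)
Your proof is correct and follows the same overall architecture as the paper's---reduce to Theorem~\ref{thm:trivial-group-by-rational-spectrum} by showing that every $T_\bbC$ has only roots of unity in its spectrum---but the way you establish this last point is genuinely different. The paper observes (via Proposition~\ref{prop:compactness-for-single-operators}) that $\sigma(T_\bbC)\cap\bbT$ is finite and consists of eigenvalues, notes that $T_\bbC$ is weakly almost periodic, and then invokes \cite[Theorem~3.11]{Glueck2016b} as a black box. You instead unfold a self-contained argument: from an irrational unimodular eigenvalue $\lambda$ you extract a $2$-dimensional real invariant subspace $V$ on which $T$ acts as an irrational rotation, use density of the iterates among all rotations to force the restricted norm on $V$ to be Euclidean, and then manufacture a bounded rank-$2$ projection of $E$ onto $V$ by conjugate-symmetrising a rank-$1$ projection through the eigenspace $\ker(T_\bbC-\lambda)$. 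This is more hands-on and avoids the external citation; the trade-off is that you have to do the projection construction by hand, whereas the paper's route packages all of that into the cited theorem.

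Two small points worth tightening. First, your sentence ``converges in operator norm and projects onto $\ker(T_\bbC-\lambda)$, which is thus nonzero'' tacitly uses that uniform mean ergodicity of $\lambda^{-1}T_\bbC$ forces $1$ to be either a resolvent point or a simple pole; equivalently, you could just quote Proposition~\ref{prop:compactness-for-single-operators} (applicable since $\{T^n\}\subseteq\calG$ is relatively compact) to get directly that every $\lambda\in\sigma(T_\bbC)\cap\bbT$ is a first-order pole and hence an eigenvalue. Second, the assertion that $Q_v+CQ_vC$ is a projection relies on $Q_v(\bar v)=0$; this holds because $P_\lambda \bar v=0$ (as $\bar v$ is an eigenvector for $\bar\lambda\neq\lambda$), but it would help to say so explicitly.
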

\begin{proof}
	Let $E_\bbC$ denote a Banach space complexification of $E$; for each $T \in \calG$ we denote the canonical extension of $T$ to $E_\bbC$ by $T_\bbC$. Then $\calG_\bbC \coloneqq \{T_\bbC\colon T \in \calG\}$ is a divisible and compact subgroup of the invertible bounded linear operators on $E_\bbC$. 
	
	Now fix $T \in \calG$; it suffices to prove that the spectrum of $T_\bbC$ consists of roots of unity only. By Proposition~\ref{prop:characterisation-of-compact-and-non-empty} the semigroup at infinity associated to $(T_\bbC^n)_{n \in \bbN_0}$ is non-empty and compact, so it follows from Proposition~\ref{prop:compactness-for-single-operators} that $\spec(T_\bbC)$ is a finite subset of the complex unit circle and consists of eigenvalues of $T_\bbC$. Moreover, the set $\{T_\bbC^n\colon n \in \bbN_0\}$ is relatively compact with respect to the weak operator topology, i.e., $T_\bbC$ is \emph{weakly almost periodic}. Since $E$ is projectively non-Hilbert, we can now apply \cite[Theorem~3.11]{Glueck2016b} to conclude that the spectrum of $T_\bbC$ consists of roots of unity only.
\end{proof}

\subsection{Strong positivity of groups} \label{subsection:strong-positivity-of-groups}

Another way to ensure that a group of linear operators is trivial is to ensure a certain condition of \emph{strong positivity}; this works in the very general setting of ordered Banach spaces. By an \emph{ordered Banach space} we mean a tuple $(E,E_+)$ where $E$ is a real Banach space and $E_+$ is a closed subset of $E$ such that $\alpha E_+ + \beta E_+ \subseteq E_+$ for all $\alpha,\beta \in [0,\infty)$ and such that $E_+ \cap (-E_+) = \{0\}$; the set $E_+$ is called the \emph{positive cone} in $E_+$.

Let $(E,E_+)$ be an ordered Banach space. An operator $T \in \calL(E)$ is called \emph{positive} if $TE_+ \subseteq E_+$; a semigroup on $E$ is said to be \emph{positive} if every operator in it is positive. A functional $\varphi \in E'$ is called \emph{positive} if $\langle \varphi, f \rangle \ge 0$ for all $f \in E_+$. A vector $f \in E_+$ is said to be an \emph{almost interior point} of $E_+$ if $\langle \varphi, f \rangle > 0$ for each non-zero positive functional $\varphi \in E'$. If, for instance, $E$ is an $L^p$-space over a $\sigma$-finite measure space and $p \in [1,\infty)$, then a function $f \in E_+$ is an almost interior point if and only if $f(\omega) > 0$ for almost all $\omega \in \Omega$. For more information about almost interior points we refer to \cite[Section~2]{GlueckAlmostInterior}. The following result is inspired by the proof of \cite[Theorem~4.1]{GlueckAlmostInterior}.

\begin{theorem} \label{thm:trivial-group-strictly-positive}
	Let $(E,E_+)$ be an ordered Banach space with $E_+ \not= \{0\}$ and let $\calG \subseteq \calL(E)$ be a norm-bounded subgroup of the invertible operators on $E$. Assume that every operator in $\calG$ is positive and that, for each $f \in E_+ \setminus \{0\}$, there exists $T \in \calG$ such that $Tf$ is an almost interior point of $E_+$. Then $E$ is one-dimensional and $\calG = \{\id_E\}$.
\end{theorem}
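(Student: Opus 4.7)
The proof decomposes into three stages.

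In the first, I would strengthen the hypothesis to: every $f \in E_+ \setminus \{0\}$ is almost interior. Here the group structure of $\calG$ is used crucially. Since $T^{-1} \in \calG$ is also positive, the adjoint $(T^{-1})'$ is a bijection of $E'$ mapping the cone of positive functionals bijectively onto itself (positivity of $T^{-1}$ sends the positive cone into itself, invertibility preserves non-zero-ness). Given $f \in E_+ \setminus \{0\}$, pick $T \in \calG$ with $Tf$ almost interior; then for any non-zero positive $\varphi \in E'$, the functional $(T^{-1})'\varphi$ is non-zero and positive, so
\[
\langle \varphi, f\rangle = \langle (T^{-1})'\varphi, Tf\rangle > 0,
\]
showing $f$ is almost interior.

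The second, and most delicate, stage is to derive $\dim E = 1$. First, no non-zero $\psi \in E'$ vanishes on $E_+$: otherwise both $\pm\psi$ would be non-zero positive functionals vanishing on all of $E_+$, contradicting the first stage; hence $\overline{\lin}\, E_+ = E$. Suppose for contradiction that $\dim E \geq 2$ and pick linearly independent $f_1, f_2 \in E_+$. The section $C := \lin(f_1, f_2) \cap E_+$ is a proper 2-dimensional closed convex cone (neither a line nor a half-plane, by properness of $E_+$), and therefore has two distinct extreme rays generated by linearly independent $g_1, g_2 \in E_+ \setminus \{0\}$. Extremality of $\bbR_+ g_1$ in $C$ forces $g_1 - s g_2 \notin E_+$ for every $s > 0$, so Hahn-Banach separation with distance $d_s := \dist(g_1 - s g_2, E_+) > 0$ yields $\varphi_s \in E'$ with $\norm{\varphi_s} = 1$, $\varphi_s \geq 0$ on $E_+$, and $\varphi_s(g_1) - s\varphi_s(g_2) \leq -d_s$; rearranging and using $\varphi_s(g_1) \geq 0$ gives $\varphi_s(g_2) \geq d_s / s$. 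The plan is to pass to a weak-$\ast$ cluster point $\varphi$ of $(\varphi_s)_{s > 0}$ as $s \to 0$: taking limits in the separation inequality yields $\varphi \geq 0$ on $E_+$ and $\varphi(g_1) \leq 0$, hence $\varphi(g_1) = 0$ by positivity, and if $\varphi \neq 0$ this contradicts the almost interiority of $g_1$ established above. The chief technical obstacle is ensuring that this cluster point is non-zero, which amounts to a quantitative transversality estimate $d_s/s \geq c > 0$ for small $s$, so that $\varphi_s(g_2) \geq c$ and hence $\varphi(g_2) \geq c$ survives in the limit. I expect this estimate to follow from the extremality of $\bbR_+ g_1$ in the 2-dimensional section $C$ combined with the strict positivity from the first stage; verifying it rigorously is the heart of the argument.

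Once $\dim E = 1$, every operator on $E$ is multiplication by a scalar, the positive invertible operators form the multiplicative group $(0,\infty)$, and any norm-bounded subgroup of $(0,\infty)$ must equal $\{1\}$ (any $a \neq 1$ has either $a^n$ or $a^{-n}$ unbounded). Hence $\calG = \{\id_E\}$, completing the proof.
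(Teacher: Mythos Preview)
Your first and third stages are correct and essentially coincide with the paper's argument: the paper also shows that every non-zero element of $E_+$ is almost interior (phrasing it via the fact that the positive surjection $T^{-1}$ preserves almost interior points, which is dual to your adjoint computation), and then concludes by identifying $\calG$ with a bounded subgroup of $(0,\infty)$.

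The gap is in your second stage, and it is the one you yourself flag. You need the quantitative lower bound $d_s/s \ge c > 0$ for small $s$ in order to guarantee that the weak-$\ast$ cluster point $\varphi$ is non-zero; in infinite dimensions a weak-$\ast$ limit of unit-norm functionals can perfectly well be zero, so this is not a cosmetic detail but the crux of the matter. Your proposed justification---``extremality of $\bbR_+ g_1$ in the 2-dimensional section combined with the strict positivity from the first stage''---does not obviously yield such a bound. Extremality in the slice only tells you $g_1 - s g_2 \notin E_+$, a qualitative fact; and almost-interiority of $g_1$ and $g_2$ gives strict positivity against every fixed non-zero positive functional, but says nothing uniform about the family $(\varphi_s)$ you construct by separation. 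Concretely, if $d_{s_n}/s_n \to 0$ one finds $h_n \in E_+$ with $(g_1 - h_n)/s_n \to g_2$, and it is not apparent how almost-interiority of the points involved rules this out. In finite dimensions your argument does go through, because there the unit sphere in $E'$ is norm-compact and any cluster point of $(\varphi_s)$ automatically has norm $1$; the difficulty is genuinely infinite-dimensional.

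The paper does not attempt a direct argument here at all: it simply invokes \cite[Theorem~2.10]{GlueckAlmostInterior}, which asserts that an ordered Banach space in which every non-zero positive element is almost interior must be one-dimensional. So the paper's proof is not self-contained at this step either, and if you want to complete your route you would effectively be reproving that cited result.
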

\begin{proof}
	We first show that every point in $E_+ \setminus \{0\}$ is an almost interior point of $E_+$. So let $f \in E_+ \setminus \{0\}$. Choose $T \in \calG$ such that $Tf$ is an almost interior point of $E_+$. Since $T^{-1}$ is an element of $\calG$, it is a positive operator on $E$, and since $T^{-1}$ is surjective it thus follows from \cite[Corollary~2.22(a)]{GlueckAlmostInterior} that $T^{-1}$ maps almost interior points to almost interior points. Hence, $f = T^{-1}Tf$ is an almost interior point.
	
	Since all vectors in $E_+ \setminus \{0\}$ are almost interior points, it follows from \cite[Theorem~2.10]{GlueckAlmostInterior} that $E$ is one-dimensional. Thus, $\calG$ can be identified with a bounded subgroup of the multiplicative group $(0,\infty)$, so $\calG$ does indeed consist of one element only.
\end{proof}

\section{Operator norm convergence of semigroups} \label{section:operator-norm-convergence-of-semigroups}

In this section we finally derive convergence theorems for various classes of operator semigroups. In Subsection~\ref{subsection:convergence-under-divisibility-conditions} representations whose underlying semigroup $(S,+)$ satisfies a certain kind of divisibility condition are considered. In Subsection~\ref{subsection:convergence-under-a-strict-positivity-condition} we then deal with positive semigroups on ordered Banach spaces under an appropriate strong positivity assumption.

\subsection{Convergence under divisibility conditions} \label{subsection:convergence-under-divisibility-conditions}

We call the semigroup $(S,+)$ \emph{essentially divisible} if, for each $s \in S$ and each integer $n \in \bbN$, there exist elements $t_1,t_2 \in S$ such that $nt_1 = s + nt_2$. This definition is taken from \cite{Glueck2019}, where it was used as a generalisation of semigroups that generate divisible groups (which played an important role in \cite{GerlachConvPOS}). Let us illustrate the notion of essential divisibility with a list of simple examples.

\begin{examples}
	\begin{enumerate}[\upshape (a)]
		\item The semigroup $([0, \infty),+)$ is essentially divisible, and so is $(\bbQ \cap [0,\infty), +)$.
		\item More generally, for each $a \ge 0$, both the semigroup $(\{0\} \cup [a,\infty),+)$ and the semigroup $\big(\{0\} \cup (\bbQ \cap [a,\infty)), +\big)$ are essentially divisible.
		\item The semigroup $([0,\infty)^n,+)$ is essentially divisible for each $n \in \bbN$.
		\item The semigroup $([0,\infty), \max)$ is essentially divisible; here, $\max$ denotes the binary operator which assigns the maximum to any two given elements of $[0,\infty)$.
		\item More generally, if $L$ is a lattice with a smallest element $i$, then $(L,\lor)$ is an essentially divisible semigroup (with neutral element $i$).
		\item The semigroup $(\bbN_0,+)$ is not essentially divisible.
		\item The semigroup $(D,+)$, where $D = \{k/2^n\colon k,n \in \bbN_0\}$ is the set of dyadic numbers in $[0,\infty)$, is not essentially divisible.
	\end{enumerate}
\end{examples}

Now we use the notion of essential divisibility to prove a convergence theorem for positive semigroups on Banach lattices and a convergence theorem for contractive semigroups on projectively non-Hilbert spaces. Let us begin with the positive case.

\begin{theorem} \label{thm:bounded-convergence-banach-lattice}
	Let $E$ be a Banach lattice and let $(T_s)_{s \in S}$ be a positive and bounded semigroup on $E$. If the semigroup at infinity, $\sgInftyON{\calT}$, is non-empty and compact and if $(S,+)$ is essentially divisible, then $(T_s)_{s \in S}$ converges with respect to the operator norm to the projection at infinity.
\end{theorem}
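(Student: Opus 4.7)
The plan is to combine Corollary~\ref{cor:characterization-of-sg-convergence} with Corollary~\ref{cor:trivial-group-positive} applied to the restriction $\calG \coloneqq \sgInftyON{\calT}|_{E_\infty}$. By Corollary~\ref{cor:characterization-of-sg-convergence} it suffices to prove $\sgInftyON{\calT} = \{P_\infty\}$, and via the topological group isomorphism $R \mapsto R|_{E_\infty}$ from Theorem~\ref{thm:JdLG-semigroup-infinity}(b) this reduces to showing that the compact subgroup $\calG$ of the invertible operators on $E_\infty$ is trivial. Two things remain to verify: (i) that $\calG$ is divisible, and (ii) that $E_\infty$ carries a Banach lattice structure on which every element of $\calG$ acts positively, so that Corollary~\ref{cor:trivial-group-positive} becomes applicable.

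For (i), fix $R \in \sgInftyON{\calT}$ and $n \in \bbN$. From $R \in \bigcap_{r \in S}\overline{\{T_s \colon s \ge r\}}$ one extracts a net $(s_\alpha)$ in $S$ with $T_{s_\alpha} \to R$ and $s_\alpha \ge r$ eventually for every $r \in S$. Essential divisibility of $(S,+)$ supplies $t_{1,\alpha}, t_{2,\alpha} \in S$ with $n t_{1,\alpha} = s_\alpha + n t_{2,\alpha}$, which the semigroup law turns into
\begin{align*}
T_{t_{1,\alpha}}^{\,n} = T_{s_\alpha}\, T_{t_{2,\alpha}}^{\,n}.
\end{align*}
Since $\calG$ is compact, two successive subnet extractions produce limits $T_{t_{i,\alpha}}|_{E_\infty} \to R_i \in \calG$ for $i = 1,2$. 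Restricting the displayed identity to $E_\infty$ and passing to the limit yields $R_1^{\,n} = R|_{E_\infty}\, R_2^{\,n}$, and since $\calG$ is commutative this rearranges to $R|_{E_\infty} = (R_1 R_2^{-1})^n$. The isomorphism from Theorem~\ref{thm:JdLG-semigroup-infinity}(b) transports this back to an $n$-th root of $R$ inside $\sgInftyON{\calT}$.

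For (ii), every element of $\sgInftyON{\calT}$ is a norm-limit of positive operators and is therefore positive on $E$; in particular $P_\infty$ is positive. To produce a Banach lattice structure on $E_\infty$ I would renorm $E$ by
\begin{align*}
\norm{x}' \coloneqq \sup_{s \in S}\norm{T_s \modulus{x}},
\end{align*}
using $\modulus{T_s x} \le T_s \modulus{x}$ and $\modulus{x+y} \le \modulus{x}+\modulus{y}$ together with positivity of the $T_s$ to show that $\norm{\cdot}'$ is an equivalent lattice norm under which the entire semigroup is contractive. In the new norm $P_\infty$ is a positive contractive projection, and a classical theorem (essentially due to Ando) provides a Banach lattice structure on $E_\infty$ with positive cone $E_\infty \cap E_+$. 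Each $R \in \calG$ preserves this cone, hence is positive in the Banach lattice sense, so Corollary~\ref{cor:trivial-group-positive} forces $\calG = \{\id_{E_\infty}\}$; Corollary~\ref{cor:characterization-of-sg-convergence} then yields $T_s \to P_\infty$.

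The main obstacle is the divisibility step~(i): the purely algebraic property of $S$ has to be converted into a topological statement about $\calG$. The crucial point is that the identity $T_{t_{1,\alpha}}^{\,n} = T_{s_\alpha}\, T_{t_{2,\alpha}}^{\,n}$ links three nets whose convergence can be controlled simultaneously inside the single compact group $\calG$, so that the resulting $n$-th root $R_1 R_2^{-1}$ genuinely lives in $\calG$ and not merely in the larger closure $\calT$ (which, unlike $\calT|_{E_\infty}$, need not be compact). A secondary technicality is that $P_\infty$ is in general not contractive in the original norm, which is why the renorming argument in~(ii) cannot be skipped when invoking Corollary~\ref{cor:trivial-group-positive} on $E_\infty$.
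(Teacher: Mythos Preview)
Your proof is correct and follows the same route as the paper: show that $\calG = \sgInftyON{\calT}|_{E_\infty}$ is a divisible compact group of positive operators on a Banach lattice, invoke Corollary~\ref{cor:trivial-group-positive}, and conclude via Corollary~\ref{cor:characterization-of-sg-convergence}. The paper compresses your divisibility argument~(i) into the phrase ``a simple compactness argument'' and, for~(ii), simply cites \cite[Proposition~II.11.5]{Schaefer1974}, which already equips the range of any positive projection on a Banach lattice with a Banach lattice structure --- so your explicit renorming, while correct, is not needed.
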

\begin{proof}
	Note that the range $E_\infty$ of the projection at infinity, $P_\infty$, is again a Banach lattice since $P_\infty$ is positive \cite[Proposition~II.11.5]{Schaefer1974}. According to Theorem~\ref{thm:JdLG-semigroup-infinity},
	\begin{align*}
		\sgInftyON{\calT}|_{E_\infty} = \overline{\{T_s \colon s \in S\}|_{E_\infty}}^{\calL(E_\infty)}
	\end{align*}
	is a compact subgroup of the invertible operators on $E_\infty$. As $(S, +)$ is essentially divisible, a simple compactness argument thus shows that $\sgInftyON{\calT}|_{E_\infty}$ is divisible. Since this group consists of positive operators, it is therefore trivial by Corollary~\ref{cor:trivial-group-positive}.
	
	Since the groups $\sgInftyON{\calT}|_{E_\infty}$ and $\sgInftyON{\calT}$ are isomorphic by Theorem~\ref{thm:JdLG-semigroup-infinity}, the semigroup at infinity, $\sgInftyON{\calT}|_{E_\infty}$, is also trivial. Thus, Corollary~\ref{cor:characterization-of-sg-convergence} yields the claim.
\end{proof}

The following corollary is due to Lotz in the special case where $S = [0,\infty)$.

\begin{corollary} \label{cor:quasi-compact-convergence-banach-lattice}
	Let $E$ be a Banach lattice and let $(T_s)_{s \in S}$ be a positive and bounded semigroup on $E$. If $T_{s_0}$ is quasi-compact for at least one $s_0 \in S$ and if $(S,+)$ is essentially divisible, then $(T_s)_{s \in S}$ converges with respect to the operator norm to a finite rank projection.
\end{corollary}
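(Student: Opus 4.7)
The plan is to derive this corollary by directly combining the two preceding results, so that very little new work is required. The hypotheses of the corollary are tailored precisely to feed into Proposition~\ref{prop:quasi-compact} and Theorem~\ref{thm:bounded-convergence-banach-lattice}, and it is really only a matter of verifying that the outputs of the former match the inputs of the latter.

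First I would invoke Proposition~\ref{prop:quasi-compact}: since $(T_s)_{s \in S}$ is bounded and $T_{s_0}$ is quasi-compact for some $s_0 \in S$, the semigroup at infinity $\sgInftyON{\calT}$ is non-empty and compact, and the projection at infinity $P_\infty$ has finite rank. This step uses nothing about the Banach lattice structure; it just records that the abstract semigroup-at-infinity machinery is applicable and produces a finite-dimensional ``reversible part''.

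Next I would feed this information into Theorem~\ref{thm:bounded-convergence-banach-lattice}. The hypotheses there are exactly: $E$ is a Banach lattice, $(T_s)_{s \in S}$ is positive and bounded, $\sgInftyON{\calT}$ is non-empty and compact, and $(S,+)$ is essentially divisible. All four are either assumed in the corollary or have just been established. The theorem then yields that $T_s \to P_\infty$ with respect to the operator norm, and by the first step $P_\infty$ is a finite rank projection, which is exactly the conclusion claimed.

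There is no genuine obstacle here; the only point worth double-checking is that the projection at infinity appearing in Proposition~\ref{prop:quasi-compact} is the same $P_\infty$ that Theorem~\ref{thm:bounded-convergence-banach-lattice} produces as the limit, but this is immediate since both statements refer to the unique projection at infinity associated with the (same) non-empty compact semigroup at infinity $\sgInftyON{\calT}$. Thus the proof is just the two-line chain: Proposition~\ref{prop:quasi-compact} $\Rightarrow$ non-empty compact $\sgInftyON{\calT}$ with $\operatorname{rank} P_\infty < \infty$; Theorem~\ref{thm:bounded-convergence-banach-lattice} $\Rightarrow$ $T_s \to P_\infty$ in operator norm.
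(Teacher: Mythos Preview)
Your proposal is correct and matches the paper's own proof essentially verbatim: the paper also derives the corollary as an immediate consequence of Proposition~\ref{prop:quasi-compact} (giving a non-empty compact $\sgInftyON{\calT}$ with finite-rank $P_\infty$) and Theorem~\ref{thm:bounded-convergence-banach-lattice} (giving operator-norm convergence to $P_\infty$). Your additional remark that the two occurrences of $P_\infty$ coincide is correct and harmless, though the paper does not bother to spell it out.
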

\begin{proof}
	This is an immediate consequence of Corollary~\ref{prop:quasi-compact} and Theorem~\ref{thm:bounded-convergence-banach-lattice}.
\end{proof}

Our second corollary -- which only deals with the semigroup $([0,\infty),+)$ -- has the nice theoretical feature that it covers, in contrast to Corollary~\ref{cor:quasi-compact-convergence-banach-lattice}, also the trivial operator semigroup that consists merely of the operator $\id_E$ -- which is arguably the most simple convergent operator semigroup.

\begin{corollary} \label{cor:spectral-condition-banach-lattice}
	Let $E$ be a complex Banach lattice and let $(T_s)_{s \in [0,\infty)}$ be a positive and bounded semigroup on $E$ which is strongly continuous at at least one time $s_0 \in (0,\infty)$. If, for each $s \in (0,\infty)$, all spectral values of $T_s$ on the unit circle are poles of the resolvent, then $T_s$ converges with respect to the operator norm as $s \to \infty$.
\end{corollary}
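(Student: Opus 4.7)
The plan is to combine the two main results proved immediately before this corollary, namely Theorem~\ref{thm:from-subsemigroups-to-the-semigroup-via-strong-continuity} and Theorem~\ref{thm:bounded-convergence-banach-lattice}, so very little work remains.

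First, I would verify that the semigroup at infinity, $\sgInftyON{\calT}$, associated with $(T_s)_{s \in [0,\infty)}$ is non-empty and compact. This is exactly the content of the equivalence (iii)~$\Leftrightarrow$~(ii) in Theorem~\ref{thm:from-subsemigroups-to-the-semigroup-via-strong-continuity}: our hypothesis that every $T_s$ (with $s \in (0,\infty)$) has all its unit-circle spectral values as poles of the resolvent is precisely assertion~(iii) there, and the strong continuity at some $s_0 \in (0,\infty)$ is the only additional time-regularity requirement needed to apply that theorem. Hence assertion~(ii) of the theorem holds and $\sgInftyON{\calT}$ is non-empty and compact.

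Next, I would recall that $([0,\infty),+)$ is essentially divisible (this is listed as an example right before the statement of Theorem~\ref{thm:bounded-convergence-banach-lattice} is applied). Since $E$ is a complex Banach lattice, the semigroup $(T_s)_{s \in [0,\infty)}$ is positive and bounded by hypothesis, and we have just established that $\sgInftyON{\calT}$ is non-empty and compact, all hypotheses of Theorem~\ref{thm:bounded-convergence-banach-lattice} are satisfied. That theorem then yields that $(T_s)_{s \in [0,\infty)}$ converges with respect to the operator norm (to the projection at infinity $P_\infty$), which is exactly the conclusion sought.

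There is no real obstacle here; the whole point of the corollary is to record that the spectral-pole condition, together with a minimal amount of strong continuity, suffices to verify the compactness hypothesis of Theorem~\ref{thm:bounded-convergence-banach-lattice} in the continuous-time setting. The one small thing worth double-checking when writing the proof is that our complex Banach lattice $E$ is a legitimate input to Theorem~\ref{thm:bounded-convergence-banach-lattice} (the statement of that theorem does not restrict the scalar field), and that the applicability of Corollary~\ref{cor:trivial-group-positive} used inside its proof is compatible with the complex setting --- which it is, since that corollary's proof begins by passing to a complex scalar field anyway.
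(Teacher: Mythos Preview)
Your proposal is correct and matches the paper's own proof essentially verbatim: the paper simply states that the result is an immediate consequence of Theorems~\ref{thm:from-subsemigroups-to-the-semigroup-via-strong-continuity} and~\ref{thm:bounded-convergence-banach-lattice}, which is exactly the combination you describe. Your additional remarks about essential divisibility of $([0,\infty),+)$ and the scalar field are harmless sanity checks.
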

\begin{proof}
	This is an immediate consequence of Theorems~\ref{thm:from-subsemigroups-to-the-semigroup-via-strong-continuity} and~\ref{thm:bounded-convergence-banach-lattice}.
\end{proof}

Now we deal with real Banach spaces which are projectively non-Hilbert; see the discussion before Corollary~\ref{cor:trivial-group-contractive} for a definition of this property.

\begin{theorem} \label{thm:convergence-projectively-non-Hilbert-case}
	Let $E$ be a real Banach space that is projectively non-Hilbert and let $(T_s)_{s \in S}$ be a contractive semigroup on $E$. If the semigroup at infinity, $\sgInftyON{\calT}$, is non-empty and compact and if $(S,+)$ is essentially divisible, then $(T_s)_{s \in S}$ converges with respect to the operator norm to the projection at infinity.
\end{theorem}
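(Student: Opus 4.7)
The plan is to follow the template of the proof of Theorem~\ref{thm:bounded-convergence-banach-lattice} verbatim, with the single change that the contractive triviality result Corollary~\ref{cor:trivial-group-contractive} replaces the positive one. Since contractivity forces $(T_s)_{s \in S}$ to be bounded, Theorem~\ref{thm:JdLG-semigroup-infinity} applies; in particular, the restricted set
\begin{align*}
	\calG \coloneqq \sgInftyON{\calT}|_{E_\infty} = \overline{\{T_s\colon s \in S\}|_{E_\infty}}^{\calL(E_\infty)}
\end{align*}
is a compact subgroup of the invertible operators on $E_\infty$, topologically isomorphic to $\sgInftyON{\calT}$ via $R \mapsto R|_{E_\infty}$. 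My goal is to show that $\calG$ satisfies the hypotheses of Corollary~\ref{cor:trivial-group-contractive}, conclude that $\calG = \{\id_{E_\infty}\}$, transport this to $\sgInftyON{\calT} = \{P_\infty\}$, and finally invoke Corollary~\ref{cor:characterization-of-sg-convergence}.

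Of the three hypotheses, two are routine. Contractivity of the elements of $\calG$ is immediate, since each of them is a norm-limit of restrictions $T_s|_{E_\infty}$ and $\norm{T_s} \le 1$. Divisibility of $\calG$ follows from essential divisibility of $(S,+)$ by a standard subnet argument: given $R \in \calG$, write $R = \lim_j T_{s_j}|_{E_\infty}$, and for each $j$ use essential divisibility to pick $t_{j,1}, t_{j,2} \in S$ with $n t_{j,1} = s_j + n t_{j,2}$; by the compactness of $\calG$ we may pass to a subnet such that $Q_i \coloneqq \lim_j T_{t_{j,i}}|_{E_\infty} \in \calG$ exists for $i = 1,2$. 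Passing the semigroup identity $T_{t_{j,1}}^n = T_{s_j} T_{t_{j,2}}^n$ to the limit, and using that $\calG$ is a commutative group, yields $R = (Q_1 Q_2^{-1})^n$.

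The one point that requires thought — and the main (albeit mild) obstacle — is the verification that the subspace $E_\infty$ is itself projectively non-Hilbert. Here I would exploit that $E_\infty$ is complemented in $E$ by the projection $P_\infty$ (which exists and commutes with all $T_s$ by Theorem~\ref{thm:JdLG-semigroup-infinity}(a)): any rank-two projection $P \in \calL(E_\infty)$ extends to the rank-two projection $\widetilde{P} \coloneqq P P_\infty \in \calL(E)$, whose range coincides with $P E_\infty$ as a set. Since $E_\infty$ carries the norm inherited from $E$, the range $P E_\infty$ inherits the same norm whether viewed as a subspace of $E_\infty$ or of $E$; therefore, if it were isometrically a Hilbert space inside $E_\infty$, it would also be isometrically a Hilbert space inside $E$, contradicting the projectively-non-Hilbert hypothesis on $E$. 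With all three hypotheses in place, Corollary~\ref{cor:trivial-group-contractive} gives $\calG = \{\id_{E_\infty}\}$, so $\sgInftyON{\calT} = \{P_\infty\}$ by the topological group isomorphism in Theorem~\ref{thm:JdLG-semigroup-infinity}(b), and Corollary~\ref{cor:characterization-of-sg-convergence} finishes the proof by giving $T_s \to P_\infty$ in the operator norm.
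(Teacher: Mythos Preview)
Your proposal is correct and follows essentially the same route as the paper's proof: restrict to $E_\infty$, verify the hypotheses of Corollary~\ref{cor:trivial-group-contractive}, and conclude via Corollary~\ref{cor:characterization-of-sg-convergence}. The only cosmetic difference is that the paper derives the projectively non-Hilbert property of $E_\infty$ from the contractivity of $P_\infty$, whereas your extension argument via $\widetilde P = P P_\infty$ is more direct and does not need that observation; both are fine.
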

\begin{proof}
	Note that the semigroup at infinity, $P_\infty$, is contractive, and hence its range is itself a projectively non-Hilbert space. It follows as in Theorem~\ref{thm:bounded-convergence-banach-lattice} that the compact group $\sgInftyON{\calT}|_{E_\infty}$ is divisible; since it consists of contractive operators only, Corollary~\ref{cor:trivial-group-contractive} shows that this group is actually trivial. Thus, the semigroup at infinity -- which is isomorphic to $\sgInftyON{\calT}|_{E_\infty}$ -- is trivial, too. So the conclusion follows from Corollary~\ref{cor:characterization-of-sg-convergence}.
\end{proof}

Again, we state the same result separately for the quasi-compact case.

\begin{corollary} \label{cor:quasi-compact-convergence-projectively-non-Hilbert}
	Let $E$ be a real Banach space that is projectively non-Hilbert and let $(T_s)_{s \in S}$ be a contractive semigroup on $E$. If $T_{s_0}$ is quasi-compact for at least one $s_0 \in S$ and if $(S,+)$ is essentially divisible, then $(T_s)_{s \in S}$ converges with respect to the operator norm to a finite rank projection.
\end{corollary}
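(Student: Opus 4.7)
The plan is to combine Proposition~\ref{prop:quasi-compact} with Theorem~\ref{thm:convergence-projectively-non-Hilbert-case}, exactly in parallel to how Corollary~\ref{cor:quasi-compact-convergence-banach-lattice} is deduced from Proposition~\ref{prop:quasi-compact} and Theorem~\ref{thm:bounded-convergence-banach-lattice}. In other words, the corollary is a direct fusion of a quasi-compactness input (producing a non-empty and compact semigroup at infinity with finite-rank projection at infinity) with a structural output (triviality of the group part in the projectively non-Hilbert, contractive, essentially divisible setting).

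First I would note that contractivity of $(T_s)_{s \in S}$ implies boundedness (trivially, $\norm{T_s} \le 1$ for all $s$), so Proposition~\ref{prop:quasi-compact} applies to our semigroup thanks to the assumption that $T_{s_0}$ is quasi-compact for some $s_0 \in S$. This yields two facts: the semigroup at infinity $\sgInftyON{\calT}$ is non-empty and compact, and the projection at infinity $P_\infty$ has finite rank.

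Next I would invoke Theorem~\ref{thm:convergence-projectively-non-Hilbert-case}. Its hypotheses are satisfied: $E$ is projectively non-Hilbert by assumption, $(T_s)_{s \in S}$ is contractive by assumption, $(S,+)$ is essentially divisible by assumption, and the non-emptiness and compactness of $\sgInftyON{\calT}$ was just produced in the previous step. The theorem then concludes that $(T_s)_{s \in S}$ converges in operator norm to $P_\infty$, which, combined with the finite-rank conclusion from Proposition~\ref{prop:quasi-compact}, gives exactly the statement of the corollary.

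There is essentially no obstacle, since both building blocks were designed to be combined in this way; the only bookkeeping point is to observe that the finite-rank statement is carried along from the quasi-compactness step, rather than being a consequence of the convergence theorem itself (which only asserts that the limit is the projection at infinity). Hence the "finite rank" in the conclusion comes from Proposition~\ref{prop:quasi-compact}, while "converges in operator norm" comes from Theorem~\ref{thm:convergence-projectively-non-Hilbert-case}.
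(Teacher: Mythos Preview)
Your proposal is correct and matches the paper's own proof essentially verbatim: the paper simply states that the corollary is an immediate consequence of Proposition~\ref{prop:quasi-compact} and Theorem~\ref{thm:convergence-projectively-non-Hilbert-case}. Your additional remark that contractivity gives the boundedness needed for Proposition~\ref{prop:quasi-compact}, and that the finite-rank conclusion comes from that proposition while convergence comes from the theorem, is exactly the right bookkeeping.
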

\begin{proof}
	This is an immediate consequence of Corollary~\ref{prop:quasi-compact} and Theorem~\ref{thm:convergence-projectively-non-Hilbert-case}.
\end{proof}

A similar result as in Corollary~\ref{cor:spectral-condition-banach-lattice} is, of course, also true for contractive semigroups on projectively non-Hilbert spaces; we refrain from stating this explicitly as a corollary.

Finally, Theorem~\ref{thm:introduction} from the introduction follows readily from Corollary~\ref{cor:quasi-compact-convergence-projectively-non-Hilbert}:

\begin{proof}[Proof of Theorem~\ref{thm:introduction}]
	(i) $\Rightarrow$ (ii): This implication is obvious.
	
	(ii) $\Rightarrow$ (i): For both possible choices of $E$, this space is projectively non-Hilbert. Since the semigroup $([0,\infty),+)$ is essentially divisible, the assertion follows from Corollary~\ref{cor:quasi-compact-convergence-projectively-non-Hilbert}.
\end{proof}

\begin{remark}
	\label{rem:continuous-vs-discrete-time}
	All results in this subsection fail as we drop the assumption that the semigroup $(S,+)$ is essentially divisible. For instance, the semigroup $(\bbN_0,+)$ is not essentially divisible, and indeed the $n$-th powers of the matrix
	\begin{align*}
		\begin{pmatrix}
			0 & 1 \\
			1 & 0
		\end{pmatrix}
	\end{align*}
	do not converge as $n \to \infty$ -- despite the fact that the matrix is positive and contractive with respect to the $p$-norm for each $p$. A closely related phenomenon is discussed in \cite[Example~3.7]{GerlachConvPOS}.
\end{remark}

\subsection{Convergence under a strong positivity condition} \label{subsection:convergence-under-a-strict-positivity-condition} 

The following theorem is generalisation of \cite[Theorem~5.3]{GlueckAlmostInterior} where only the cases $S = \bbN_0$ and $S = [0,\infty)$ where considered. 

\begin{theorem} \label{thm:convergence-of-a-strictly-positive-semigroup}
	Let $(E,E_+)$ be an ordered Banach space with $E_+ \not= \{0\}$ and let $(T_s)_{s \in S}$ be a bounded and positive semigroup on $E$. Moreover, assume that $T_{s_0}$ is quasi-compact for at least one $s_0 \in S$ and that the following strong positivity condition holds: for each $f \in E_+ \setminus \{0\}$ there exists $s \in S$ such that $T_sf$ is an almost interior point of $E_+$.
	
	Then $(T_s)_{s \in S}$ converges with respect to the operator norm to a projection in $\calL(E)$ of rank at most $1$.
\end{theorem}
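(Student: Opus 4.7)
The plan is to combine Proposition~\ref{prop:quasi-compact} with Theorem~\ref{thm:trivial-group-strictly-positive}, applied to the restricted group at infinity on the finite-dimensional invariant subspace $E_\infty$. First I would invoke Proposition~\ref{prop:quasi-compact} to obtain that $\sgInftyON{\calT}$ is non-empty and compact and that $P_\infty$ has finite rank; in particular $E_\infty = P_\infty E$ is finite-dimensional. By Theorem~\ref{thm:JdLG-semigroup-infinity}(b) the group $\calG := \sgInftyON{\calT}|_{E_\infty}$ is a compact subgroup of the invertible operators on $E_\infty$, and since norm-limits of positive operators are positive, every element of $\calG$ is positive (so, in particular, $P_\infty$ is positive). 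Endow $E_\infty$ with the cone $(E_\infty)_+ := E_+ \cap E_\infty$, which by positivity of $P_\infty$ equals $P_\infty E_+$. The goal is to show, via Theorem~\ref{thm:trivial-group-strictly-positive}, that $\dim E_\infty \le 1$ and $\calG = \{\id_{E_\infty}\}$; then Corollary~\ref{cor:characterization-of-sg-convergence} yields operator-norm convergence of $(T_s)_{s \in S}$ to $P_\infty$.

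In the main case $(E_\infty)_+ \neq \{0\}$, I would verify the hypotheses of Theorem~\ref{thm:trivial-group-strictly-positive} for $\calG$ acting on $(E_\infty,(E_\infty)_+)$. Boundedness, invertibility and positivity of $\calG$ are already in hand. For the strong positivity condition, fix $f \in (E_\infty)_+ \setminus \{0\}$; by hypothesis there exists $s \in S$ such that $T_s f$ is an almost interior point of $E_+$. Invariance of $E_\infty$ gives $T_s f \in (E_\infty)_+$, and the identity $\calT|_{E_\infty} = \calG$ from Theorem~\ref{thm:JdLG-semigroup-infinity}(b) shows that $T_s|_{E_\infty} \in \calG$. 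The delicate transfer step is that $T_s f$ is almost interior also in $(E_\infty)_+$: any non-zero positive functional $\varphi$ on $E_\infty$ extends to the positive functional $\varphi \circ P_\infty$ on $E$, which is non-zero (evaluate at any $x \in E_\infty$ with $\varphi(x) \neq 0$), so $\varphi(T_s f) = \varphi \circ P_\infty(T_s f) > 0$ by almost interiority in $E_+$.

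The remaining case $(E_\infty)_+ = \{0\}$ I would absorb into the ``rank $0$'' alternative by showing that then $P_\infty = 0$. The strong positivity hypothesis guarantees the existence of at least one almost interior point $g = T_s f \in E_+$. This forces $\overline{\lin}\,E_+ = E$: for any non-zero $\psi \in E'$ vanishing on $E_+$, the functional $\psi$ is itself positive with $\psi(g) = 0$, contradicting almost interiority of $g$. Since the case assumption means $P_\infty E_+ = \{0\}$, by linearity and continuity $P_\infty$ vanishes on all of $E$. Then Theorem~\ref{thm:JdLG-semigroup-infinity}(c) yields $T_s \to 0$ in operator norm, which is a projection of rank $0$.

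The main obstacle is the cone bookkeeping on $E_\infty$: identifying $(E_\infty)_+ = P_\infty E_+$, transferring almost interiority via the extension $\varphi \mapsto \varphi \circ P_\infty$, and recognising that the degenerate case where the induced cone collapses is not a real obstruction but simply the sub-case $P_\infty = 0$ in disguise. Once this is handled, the rest of the argument is a routine assembly of Proposition~\ref{prop:quasi-compact}, Theorem~\ref{thm:JdLG-semigroup-infinity}, Theorem~\ref{thm:trivial-group-strictly-positive}, and Corollary~\ref{cor:characterization-of-sg-convergence}.
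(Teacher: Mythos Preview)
Your proposal is correct and follows essentially the same route as the paper's proof: invoke Proposition~\ref{prop:quasi-compact}, endow $E_\infty$ with the cone $P_\infty E_+ = E_+ \cap E_\infty$, transfer almost interiority to $E_\infty$ via the positive projection $P_\infty$, apply Theorem~\ref{thm:trivial-group-strictly-positive} to the restricted group, and finish with Corollary~\ref{cor:characterization-of-sg-convergence}. The only cosmetic difference is that where the paper cites \cite[Proposition~2.9 and Corollary~2.22(b)]{GlueckAlmostInterior} for the density of $E_+ - E_+$ and the transfer of almost interiority, you supply both arguments directly (via Hahn--Banach and the extension $\varphi \mapsto \varphi \circ P_\infty$), which is perfectly fine.
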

\begin{proof}
	According to Proposition~\ref{prop:quasi-compact} the semigroup at infinity, $\sgInftyON{\calT}$, is non-empty and compact since $(T_s)_{s \in S}$ is bounded and since $T_{s_0}$ is quasi-compact. Let $P_\infty$ denote the corresponding projection at infinity. Then $P_\infty$ is a positive operator and hence, its range $E_\infty$ is also an ordered Banach space with positive cone $P_\infty E_+ = E_+ \cap E_\infty$. If $P_\infty = 0$, Theorem~\ref{thm:JdLG-semigroup-infinity}(c) implies that the semigroup converges to $0$; so assume now that $P_\infty \not= 0$.
	
	It follows from the assumptions that there exists at least one almost interior point in $E_+$, which implies that the set $E_+ - E_+$ is dense in $E$ (see e.g.\ \cite[Proposition~2.9]{GlueckAlmostInterior}). In particular, the positive cone $P_\infty E_+$ of the space $E_\infty$ is non-zero since $P_\infty \not= 0$.
	
	By Theorem~\ref{thm:JdLG-semigroup-infinity}(b), $\sgInftyON{\calT}|_{E_\infty}$ is a compact subgroup of the invertible operators on $E_\infty$, and for each $s \in S$ the restriction $T_s|_{E_\infty}$ is contained in $\sgInftyON{\calT}|_{E_\infty}$. Moreover, $\sgInftyON{\calT}|_{E_\infty}$ clearly consists of positive operators. We now show that this group satisfies the assumptions of Theorem~\ref{thm:trivial-group-strictly-positive}.
	
	To this end, let $0 \not= f \in P_\infty E_+$. By assumption there exists an $s \in S$ such that $T_sf$ is an almost interior point of $E_+$. Since $T_sf \in P_\infty E_+$, it follows from \cite[Corollary~2.22(b)]{GlueckAlmostInterior} that the vector $T_sf$ is also an almost interior point of the positive cone $P_\infty E_+$ of $E_\infty$. Hence, the operator $T_s|_{P_\infty E} \in \sgInftyON{\calT}|_{E_\infty}$ maps $f$ to an almost interior point of the positive cone of $E_\infty$, so we can employ Theorem~\ref{thm:trivial-group-strictly-positive} to conclude that $E_\infty$ is one-dimensional and that $\sgInftyON{\calT}|_{E_\infty} = \{\id_{E_\infty}\}$. Corollary~\ref{cor:characterization-of-sg-convergence} thus shows that $(T_s)_{s \in S}$ converges to the rank-$1$ projection $P_\infty$.
\end{proof}

\section{Application: coupled parabolic equations on $\bbR^d$} \label{section:application-coupled-parabolic-equations}

In this section we use Theorem~\ref{thm:introduction} to analyse the asymptotic behaviour of coupled parabolic equations with possibly unbounded coefficients on the space $\bbR^d$. Of course, the unboundedness of the coefficients forces us to impose other conditions on the equation in order to obtain well-posedness. Throughout the section we mainly rely on the results of \cite{Delmonte2011}, and as in this paper, we work on the space of bounded continuous functions over $\bbR^d$.

\subsection{Setting}

Here is our precise setting. Fix an integer $N \geq 1$ (which will denote the number of coupled equations) as well as functions $A \colon \bbR^d \to \bbR^{d \times d}$, $b\colon \bbR^d \to \bbR^d$ and $V\colon \bbR^d \to \bbR^{N \times N}$ and assume that the following conditions are satisfied: 

\begin{enumerate}[\upshape (a)]
	\item For all $x \in \bbR^d$ the matrix $A(x)$ is symmetric and there exists a continuous function $\nu\colon \bbR^d \to (0,\infty)$ such that the ellipticity condition
	\begin{align*}
		\xi^T A(x) \xi \ge \nu(x) \norm{\xi}_2
	\end{align*}
	holds for all $x \in \bbR^d$ and all $\xi \in \bbR^d$.
	
	\item There exists $\alpha \in (0,1)$ such that the functions $A$, $b$ and $V$ are locally $\alpha$-H\"older continuous on $\bbR^d$.
	
	\item The function $V$ is bounded.
	
	\item There exists a twice continuously differentiable function $\varphi\colon \bbR^d \to (0,\infty)$ such that $\varphi(x) \to \infty$ as $\norm{x}_2 \to \infty$ and a number $\lambda_0 > 0$ such that the estimate 
	\begin{align*}
		\lambda_0 \varphi - \sum_{i,j=1}^d A_{ij} \partial_{ij} \varphi - \sum_{j=1}^d b_j \partial_j \varphi \ge 0
	\end{align*}
	holds on $\bbR^d$. 
\end{enumerate}

Those are essentially the assumptions from~\cite[Hypotheses~2.1]{Delmonte2011}, with two exceptions:
\begin{itemize}
	\item Instead of boundedness of $V$ a weaker condition is used there (see \cite[Hypotheses~2.1(iii) and~Remark~2.2]{Delmonte2011}). The reason why we assume boundedness of $V$ is explained after Corollary~\ref{cor:semigroups-for-degenerate-parabolic-pde-contractive}.
	
	\item At first glance, the inequality in~\cite[Hypotheses~2.2(iv)]{Delmonte2011} looks slightly distinct from the inequality that is assumed in assertion~(d). However, since $V$ is assumed to be bounded, both inequalities are actually equivalent in our setting (if one changes $\lambda_0$ appropriately).
\end{itemize}

We point out that both $A$ and $b$ are allowed to be unbounded and that $A(x)$ need not be bounded away from $0$ as $\norm{x}_2 \to \infty$. In the following, the (possibly degenerate) parabolic equation
\begin{align}
	\label{eq:degenerate-parabolic-pde}
	\dot u = (\calB + V)u
\end{align}
is considered on the space $C_b(\bbR^d; \bbR^N)$ of bounded continuous function on $\bbR^d$ with values in $\bbR^N$, where the operator $\calB$ is given by
\begin{align}
	\label{eq:degenerate-elliptic-operator-without-potential}
	\calB u \coloneqq 
	\begin{pmatrix}
		\big(\sum_{i,j=1}^d A_{ij} \partial_{ij} + \sum_{j=1}^d b_j \partial_j\big) u_1 \\
		\vdots \\
		\big(\sum_{i,j=1}^d A_{ij} \partial_{ij} + \sum_{j=1}^d b_j \partial_j\big) u_N
	\end{pmatrix}
\end{align}
for all $u$ in the domain
\begin{align*}
	D(\calB) \coloneqq \{u \in C_b(\bbR^d; \bbR^N) & \cap \bigcap_{1 \le p < \infty} W^{2,p}_{\operatorname{loc}}(\bbR^d; \bbR^N)\colon \\ & \text{the expression in~\eqref{eq:degenerate-elliptic-operator-without-potential} is in } C_b(\bbR^d; \bbR^N)\}.
\end{align*}

The above setting will allow us to employ the results from \cite{Delmonte2011} about well-posedness of the equation~\eqref{eq:degenerate-parabolic-pde}. In order to apply our Theorem~\ref{thm:introduction} to study the long-term behaviour of the solutions, though, we have to ensure that the space $C_b(\bbR^d; \bbR^N)$ is isometrically isomorphic to a real-valued $C_b$-space. To this end, we endow it with the norm 
\begin{align*}
	\norm{u}_\infty = \max\{\norm{u_k}_\infty\colon k \in \{1,\dots,N\}\}
\end{align*}
for all $u$ in this space. This already suggests that, in order to apply Theorem~\ref{thm:introduction}, we further need the matrix $V(x)$ to be \emph{$\infty$-dissipative} for each $x \in \Omega$ as to ensure that the solution semigroup of~\eqref{eq:degenerate-parabolic-pde} is contractive. 

In \cite{Delmonte2011}, the space $C_b(\bbR^d; \bbR^N)$ is equipped with the norm $\norm{u} = \sum_{k=1}^N \norm{u_k}_\infty$ which is equivalent to the norm introduced above but which does not render $C_b(\bbR^d; \bbR^N)$ an AM-space.

\begin{proposition} \label{prop:semigroups-for-degenerate-parabolic-pde}
	The operators $\calB$ and $\calB + V$ \textup{(}with $D(\calB + V) \coloneqq D(\calB)$\textup{)} on $C_b(\bbR^d;\bbR^N)$ are closed, and all sufficiently large real numbers belong to the resolvent sets of both $\calB$ and $\calB+V$. 
	
	Moreover, there exist operator semigroups $(S_t)_{t \in [0,\infty)}$ and $(T_t)_{t \in [0,\infty)}$ on $C_b(\bbR^d; \bbR^N)$ with the following properties:
	\begin{enumerate}[\upshape (a)]
		\item For each $f \in C_b(\bbR^d;\bbR^N)$, each $x \in \bbR^d$ and all sufficiently large real numbers $\lambda$ the functions
		\begin{align*}
			(0,\infty) \ni t \mapsto \ue^{-\lambda t}S_tf(x) \in \bbR^N
			\quad \text{and} \quad
			(0,\infty) \ni t \mapsto \ue^{-\lambda t}T_tf(x) \in \bbR^N
		\end{align*}
		are continuous and in $L^1((0,\infty);\bbR^N)$, and their integrals equal $\Res(\lambda,\calB)f(x)$ and $\Res(\lambda,\calB+V)f(x)$, respectively.
		
		\item The semigroup $(S_t)_{t \in [0,\infty)}$ is contractive.
	\end{enumerate}
\end{proposition}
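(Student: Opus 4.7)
The plan is to build both semigroups from the well understood scalar theory, using the fact that the differential part of $\calB$ acts componentwise and that $V$ is bounded.

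First I would treat the scalar operator $\calL u \coloneqq \sum_{i,j} A_{ij}\partial_{ij} u + \sum_j b_j \partial_j u$ on $C_b(\bbR^d;\bbR)$. Under hypotheses~(a), (b) and~(d), this is the classical setting where one constructs, via a transition kernel obtained from the interior parabolic Schauder theory together with the Lyapunov function estimate in~(d), a positive contractive semigroup on $C_b(\bbR^d;\bbR)$ whose resolvent is given by the Laplace transform for sufficiently large real $\lambda$; this is exactly the framework used in \cite{Delmonte2011} and the references therein. Closedness of $\calL$ on its $W^{2,p}_{\operatorname{loc}}$-type domain and the inclusion of all sufficiently large reals in the resolvent set follow from this construction. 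Then I would define $S_t$ on $C_b(\bbR^d;\bbR^N)$ by applying the scalar semigroup componentwise, $(S_t u)_k \coloneqq S^\calL_t u_k$. With the max-norm on the codomain one immediately gets $\norm{S_t u}_\infty = \max_k \norm{S^\calL_t u_k}_\infty \le \max_k \norm{u_k}_\infty = \norm{u}_\infty$, proving~(b). The closedness of $\calB$ and the resolvent integral formula in~(a) for $S_t$ follow componentwise from the scalar statements.

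For the semigroup $T_t$ associated with $\calB + V$, the key observation is that $V$ is a bounded, pointwise multiplication operator on $C_b(\bbR^d;\bbR^N)$. I would construct $T_t$ by a Dyson--Phillips series
\begin{equation*}
T_t = \sum_{n=0}^{\infty} T_n(t), \qquad T_0(t) = S_t, \qquad T_{n+1}(t) = \int_0^t S_{t-s} V\, T_n(s)\,\dx s,
\end{equation*}
where the integrals are interpreted pointwise in $x$; because $\norm{V}_\infty < \infty$ and $\norm{S_t} \le 1$, the series converges absolutely in operator norm, uniformly on bounded time intervals, so $T_t$ is well defined on $C_b(\bbR^d;\bbR^N)$ and satisfies the semigroup law. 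Closedness of $\calB + V = \calB + V$ follows from the closedness of $\calB$ and the boundedness of $V$, and for $\lambda$ larger than the growth bound of $(T_t)$ the expected resolvent set inclusion and Laplace transform formula
\begin{equation*}
\Res(\lambda,\calB+V) f(x) = \int_0^\infty \ue^{-\lambda t} T_t f(x)\,\dx t
\end{equation*}
can be verified by integrating the Dyson--Phillips series term by term and comparing with the Neumann series $\Res(\lambda,\calB+V) = \Res(\lambda,\calB) \sum_{n\ge 0} (V\Res(\lambda,\calB))^n$, which is convergent in operator norm because $V$ is bounded.

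The step I expect to be the main obstacle is not the perturbation step but the careful handling of the fact that neither $S_t$ nor $T_t$ is strongly continuous on $C_b(\bbR^d;\bbR^N)$: the Laplace transform identity in~(a) therefore has to be interpreted in the pointwise sense $x \in \bbR^d$ rather than in a Bochner sense, and the continuity and $L^1$-integrability of $t \mapsto \ue^{-\lambda t} S_t f(x)$ rely on the pointwise continuity of parabolic solutions for $t > 0$ (which comes from interior regularity for the scalar transition kernel) together with a uniform bound coming from contractivity. Once this pointwise interpretation is set up for the scalar semigroup, everything transfers to the coupled case componentwise and to $T_t$ through the Dyson--Phillips series, so the proposition reduces essentially to quoting the relevant scalar results and performing the bounded multiplicative perturbation.
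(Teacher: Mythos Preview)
Your approach is correct and, for the semigroup $(S_t)$, essentially identical to the paper's: reduce to the scalar operator $\calL$, invoke the results of \cite{Delmonte2011} for the scalar semigroup, and let $S_t$ act componentwise; contractivity with respect to the max-norm then follows exactly as you indicate.

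For $(T_t)$ you take a genuinely different route. The paper simply cites \cite[Section~3]{Delmonte2011}, where the coupled system $\calB+V$ is treated directly (the construction there is carried out for the full matrix-valued operator, not via perturbation). You instead construct $T_t$ from $S_t$ by a pointwise Dyson--Phillips series, exploiting the boundedness of $V$. This is legitimate, and in fact the paper explicitly remarks (immediately after the proof) that in the bounded-$V$ setting one may alternatively obtain $T_t$ by a bounded perturbation argument, via the theory of bi-continuous semigroups. Your approach has the virtue of being self-contained once the scalar theory is in hand and of making transparent why boundedness of $V$ suffices; the paper's approach has the advantage of applying the black box from \cite{Delmonte2011} uniformly to both semigroups without any additional perturbation argument. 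The one place where you should be a little more careful is verifying that each Dyson--Phillips term $T_n(t)$ actually maps $C_b(\bbR^d;\bbR^N)$ into itself (i.e., preserves continuity in $x$, not just boundedness); this follows from the joint continuity of the scalar transition kernel, but deserves a sentence.
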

\begin{proof}
	The assertions about $\calB$ and $\calB + V$, as well as the existence of both semigroups and property~(a) follow from \cite[Section~3]{Delmonte2011}; to see that we can really use the domain $D(\calB)$ as domain of the operator $\calB+V$ we need the assumption that $V$ is bounded.
	
	Since $\calB$ acts separately in every component, so does the semigroup $(S_t)_{t \in [0,\infty)}$; hence, contractivity of $(S_t)_{t \in [0,\infty)}$ follows from contractivity in the scalar case, which can for instance be found in~\cite[Proposition~2.3(i)]{Delmonte2011}.
\end{proof}

The semigroup $(T_t)_{t \in [0,\infty)}$ describes the solutions to our parabolic equation~\eqref{eq:degenerate-parabolic-pde}; see \cite[Section~3]{Delmonte2011}. We note that, in our setting where the matrix potential $V$ is bounded, one could -- alternatively to the approach from \cite{Delmonte2011} -- employ the theory of bi-continuous semigroups to study the perturbed operator $\calB + V$; see \cite[beginning of Section~5]{Metafune2002} and \cite[Theorem~3.5]{Farkas2004}. (There are also results about unbounded perturbations of bi-continuous semigroups such as in \cite[Corollary~4.2]{Albanese2004}, but we do not know whether such results can be applied under the assumptions of \cite[Hypotheses~2.1]{Delmonte2011}).

We point out that while the semigroup $(S_t)_{t \in [0,\infty)}$ is positive, the semigroup $(T_t)_{t \in [0,\infty)}$ is not positive, in general. Moreover, in general we cannot expect those semigroups to be strongly continuous (see for instance the discussion at the beginning of \cite[Subsection~3.1]{Delmonte2011}).

If we assume $\infty$-dissipativity of the matrices $V(x)$, then the semigroup $(T_t)_{t \in [0,\infty)}$ is also contractive:

\begin{corollary} \label{cor:semigroups-for-degenerate-parabolic-pde-contractive}
	Assume that, for each $x \in \bbR^d$, the matrix $V(x)$ is dissipative with respect to the $\infty$-norm on $\bbR^N$. Then the semigroup $(T_t)_{t \in [0,\infty)}$ is contractive, too.
\end{corollary}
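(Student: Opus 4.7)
The plan is to combine the given contractivity of $(S_t)_{t \ge 0}$ with a multiplication semigroup generated pointwise by $V$, and to deduce contractivity of $(T_t)_{t \ge 0}$ via a Trotter-type approximation. Since each matrix $V(x)$ is dissipative with respect to $\norm{\argument}_\infty$ on $\bbR^N$, the matrix exponential $e^{sV(x)}$ is a contraction on $(\bbR^N, \norm{\argument}_\infty)$ for every $s \ge 0$ and every $x \in \bbR^d$. Consequently, the multiplication operators
\[ (M_s f)(x) := e^{sV(x)} f(x), \qquad f \in C_b(\bbR^d;\bbR^N), \]
form a semigroup of contractions on $C_b(\bbR^d; \bbR^N)$ (continuity of $M_s f$ is guaranteed by the H\"older regularity and boundedness of $V$).

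For each $n \in \bbN$ and $t > 0$, the operator $U_n(t) := (S_{t/n} M_{t/n})^n$ is then contractive as a product of contractions. I would then show that $U_n(t) f(x) \to T_t f(x)$ pointwise in $x$ as $n \to \infty$, for every $f \in C_b(\bbR^d; \bbR^N)$; this would immediately give $\modulus{T_t f(x)} \le \norm{f}_\infty$ for all $x$, hence $\norm{T_t} \le 1$. The route to establishing this convergence is via Laplace transforms: by Proposition~\ref{prop:semigroups-for-degenerate-parabolic-pde}(a), for sufficiently large $\lambda > 0$ we have a resolvent representation of $T_t f(x)$. Expanding $U_n(t)$ into its formal Neumann/Dyson series and comparing with the resolvent identity
\[ \Res(\lambda, \calB + V) = \Res(\lambda, \calB) + \Res(\lambda, \calB)\, V \Res(\lambda, \calB + V), \]
which is valid because $V$ is bounded, one can show that the Laplace transforms of $t \mapsto U_n(t) f(x)$ converge to that of $t \mapsto T_t f(x)$; injectivity of the Laplace transform on continuous functions of $t$ then yields the desired pointwise convergence.

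The main obstacle is precisely the verification of this Trotter-type convergence: since $(T_t)$ and $(S_t)$ need not be strongly continuous, the classical Trotter product theorem does not apply off the shelf. An alternative route, which I would investigate in parallel, is a direct maximum-principle argument. The $\infty$-dissipativity of $V(x)$ says exactly that for any $v \in \bbR^N$ and any index $k_0$ with $\modulus{v_{k_0}} = \norm{v}_\infty$ one has $\sgn(v_{k_0}) (V(x) v)_{k_0} \le 0$. Applied componentwise at a space-time point where $\modulus{u_{k_0}(t_0, x_0)}$ realises $\norm{u(t_0, \argument)}_\infty$, and combined with the parabolic maximum principle for the diagonal part $\calB$, this forces $t \mapsto \norm{u(t, \argument)}_\infty$ to be non-increasing, yielding contractivity. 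Making the maximum-principle argument rigorous in the absence of strong continuity once again reduces, via Proposition~\ref{prop:semigroups-for-degenerate-parabolic-pde}(a), to a Hille--Yosida-type limiting procedure based on the resolvent representation and boundedness of $V$.
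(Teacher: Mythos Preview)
Your proposal outlines two plausible strategies but leaves the decisive step in each unresolved. In the Trotter route you correctly flag that the product formula is not available without strong continuity, and the Laplace-transform matching you suggest does not by itself deliver the pointwise convergence $U_n(t)f(x)\to T_tf(x)$; convergence of Laplace transforms gives at best convergence in a weak or averaged sense, not pointwise in $t$. The maximum-principle route likewise rests on a regularity argument at an interior maximum that you do not actually carry out. So as written, neither branch closes.

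The paper sidesteps both obstacles by working entirely on the resolvent side. Contractivity of $(S_t)$ together with the pointwise Laplace representation of Proposition~\ref{prop:semigroups-for-degenerate-parabolic-pde}(a) shows that $\lambda\Res(\lambda,\calB)$ is contractive for every $\lambda>0$, so $\calB$ is dissipative. Since $V$ is bounded and pointwise $\infty$-dissipative, the multiplication operator $V$ is strictly dissipative, and hence $\calB+V$ (with the same domain, again thanks to boundedness of $V$) is dissipative; thus $\lambda\Res(\lambda,\calB+V)$ is contractive for large $\lambda$. Post's inversion formula for the Laplace transform of $\bbR^N$-valued functions then recovers $T_tf(x)$ pointwise in $x$ as a limit of powers of $\tfrac{n}{t}\Res(\tfrac{n}{t},\calB+V)$ applied to $f$, and contractivity of these powers yields $\norm{T_tf}_\infty\le\norm{f}_\infty$ directly. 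No Trotter convergence and no parabolic maximum principle is needed. Your closing remark about a ``Hille--Yosida-type limiting procedure based on the resolvent representation'' is in fact precisely the right idea --- it is the whole argument, not a fallback.
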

\begin{proof}
	For each $f \in C_b(\bbR^d;\bbR^N)$, each $x \in \bbR^d$ and each $\lambda > 0$ the mapping $(0,\infty) \ni t \mapsto \ue^{-\lambda t}S_tf(x) \in \bbR^N$ is continuous and in $L^1((0,\infty);\bbR^N)$, and its integral equals $\Res(\lambda,\calB)f(x)$; this follows from Proposition~\ref{prop:semigroups-for-degenerate-parabolic-pde} and from the identity theorem for analytic functions. As $(S_t)_{t \in [0,\infty)}$ is contractive, so is the operator $\lambda \Res(\lambda,\calB)$ for each $\lambda > 0$, and thus it follows that $\calB$ is dissipative. 
	
	The matrix-valued multiplication operator $V$ is dissipative by assumption, and since it is a bounded operator, it is thus even strictly dissipative. Consequently, the operator $\calB + V$ is dissipative, too. It now follows from Post's inversion formula for the Laplace transform (for $\bbR^N$-valued functions) and, again, from Proposition~\ref{prop:semigroups-for-degenerate-parabolic-pde} that $(T_t)_{t \in [0,\infty)}$ is contractive.
\end{proof}

The proof of Corollary~\ref{cor:semigroups-for-degenerate-parabolic-pde-contractive} is the reason why we assumed $V$ to be bounded; we needed the boundedness on two occasions in the proof: (i) in order to derive strict dissipativity of $V$ from mere dissipativity, and (ii) in order for $\calB + V$ to have the same domain as $\calB$. The authors do not know whether Corollary~\ref{cor:semigroups-for-degenerate-parabolic-pde-contractive} remains true for unbounded $V$ which satisfies, besides dissipativity, only the assumptions of \cite[Hypotheses~2.1]{Delmonte2011}.

\subsection{A convergence result}

After the preparations of the preceding subsection, we now arrive at the following convergence result for the solutions to~\eqref{eq:degenerate-parabolic-pde}. Let us remark that, if the matrices $V(x)$ in the potential have non-negative off-diagonal entries, the long-term behaviour of the solutions equations of the type~\eqref{eq:degenerate-parabolic-pde} was studied in~\cite[Section~4]{Addona2019}; this is possible since the mentioned assumption on $V(x)$ allows for the use of Perron--Frobenius theory. 

Here, we make no such positivity assumption. Instead, we are going to assume that the matrices $V(x)$ are $\infty$-dissipative. If the operator semigroup $(T_t)_{t \in [0,\infty)}$ is immediately compact, this implies that the solutions to~\eqref{eq:degenerate-parabolic-pde} converge uniformly (for initial values in the unit ball) as time tends to infinity.

\begin{theorem} \label{thm:convergence-on-unbounded-domains}
	Assume that, for each $x \in \bbR^d$, the matrix $V(x)$ is dissipative with respect to the $\infty$-norm on $\bbR^N$. If the operators $T_t$ are compact for $t > 0$, then $T_t$ converges with respect to the operator norm to a finite-rank projection as $t \to \infty$.
\end{theorem}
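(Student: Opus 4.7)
The plan is to reduce the theorem directly to Theorem~\ref{thm:introduction} by verifying that our semigroup lives on an AM-space, is contractive, and contains a quasi-compact operator.

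First, I would observe that $C_b(\bbR^d; \bbR^N)$ equipped with the norm $\norm{u}_\infty = \max_{k} \norm{u_k}_\infty$ is isometrically lattice-isomorphic to $C_b(\bbR^d \times \{1,\dots,N\}; \bbR)$ via the canonical identification $u \mapsto \big((x,k) \mapsto u_k(x)\big)$; the paper's introduction already records that spaces of the latter form are AM-spaces. Consequently the underlying Banach space in our setting is (after isometric isomorphism) an AM-space over $\bbR$, which is precisely the kind of space to which Theorem~\ref{thm:introduction} applies.

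Second, I would invoke Corollary~\ref{cor:semigroups-for-degenerate-parabolic-pde-contractive}: the hypothesis that each $V(x)$ is $\infty$-dissipative yields that $(T_t)_{t \in [0,\infty)}$ is a contractive semigroup on $C_b(\bbR^d; \bbR^N)$ with respect to this norm. Thus the contractivity hypothesis of Theorem~\ref{thm:introduction} is satisfied.

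Third, by assumption $T_t$ is compact for every $t > 0$; in particular $T_{t_0}$ is compact, hence quasi-compact, for some (any) $t_0 > 0$. This verifies assertion~(ii) of Theorem~\ref{thm:introduction}. Applying the implication (ii) $\Rightarrow$ (i) of that theorem, we conclude that $T_t$ converges in operator norm as $t \to \infty$ to a finite-rank projection, which is exactly the claim.

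There is no real obstacle here beyond the bookkeeping of the AM-space identification; the heavy lifting has already been carried out in Theorem~\ref{thm:introduction} (and, ultimately, in Corollary~\ref{cor:quasi-compact-convergence-projectively-non-Hilbert}). The only point that deserves a sentence of care is that we must use the max-componentwise norm rather than the sum norm employed in \cite{Delmonte2011}, so that the AM-space structure is preserved; since the two norms are equivalent, this change does not affect compactness of $T_t$ or dissipativity of $\calB + V$, and Corollary~\ref{cor:semigroups-for-degenerate-parabolic-pde-contractive} was explicitly formulated with the max norm in mind.
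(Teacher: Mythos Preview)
Your proposal is correct and follows essentially the same route as the paper: the paper's proof also invokes Corollary~\ref{cor:semigroups-for-degenerate-parabolic-pde-contractive} for contractivity, identifies $C_b(\bbR^d;\bbR^N)$ isometrically with $C_b(L;\bbR)$ for $L$ the disjoint union of $N$ copies of $\bbR^d$ (your $\bbR^d\times\{1,\dots,N\}$), and then applies Theorem~\ref{thm:introduction}. Your additional remarks about the norm choice and the quasi-compactness verification are accurate and in line with the paper's discussion.
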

\begin{proof}
	This is a consequence of Corollary~\ref{cor:semigroups-for-degenerate-parabolic-pde-contractive} and Theorem~\ref{thm:introduction} since $C_b(\bbR^d;\bbR^N)$ is isometrically isomorphic to the space $C_b(L;\bbR)$, where $L$ is composed of $N$ disjoint copies of $\bbR^d$.
\end{proof}

Of course, one does not really need to assume that all operators $T_t$ (for $t > 0$) are compact in order to apply Theorem~\ref{thm:introduction}; it would suffice to assume that at least one operator $T_{t_0}$ is quasi-compact. However, the property that all $T_t$ are compact is quite a reasonable assumption in this setting since there are several sufficient criteria for this property available; we refer to \cite[Subsection~3.2]{Delmonte2011} for such conditions and refrain from stating them here explicitly.

However, let us illustrate the above result by the following simple concrete example, where the differential operator is a special case of the one considered in \cite[Section~4]{Delmonte2011}.

\begin{example} \label{ex:concrete-example-on-unbounded-domain}
	Consider the $\bbR^2$-valued evolution equation
	\begin{align}
		\label{eq:concrete-example-on-unbounded-domain}
		\begin{pmatrix}
			\dot u_1 \\ \dot u_2
		\end{pmatrix}
		= 
		\begin{pmatrix}
			\Delta u_1 \\ \Delta u_2
		\end{pmatrix}
		-
		\begin{pmatrix}
			(1+\norm{x}_2^2)^\beta \; x^T \nabla u_1 \\
			(1+\norm{x}_2^2)^\beta \; x^T \nabla u_2
		\end{pmatrix}
		+
		V(x)
		\begin{pmatrix}
			u_1 \\ u_2
		\end{pmatrix}
	\end{align}
	on $\bbR^d$, where $\beta > 0$ is a fixed real number and where $V(x)$ is given by
	\begin{align*}
		V(x) = 
		v(x)
		\begin{pmatrix}
			-1 & -1 \\
			-2 & -2
		\end{pmatrix}
		+
		w(x)
		\begin{pmatrix}
			-1 & -1 \\
			-1 & -1
		\end{pmatrix}
	\end{align*}
	for two functions $v,w\colon \bbR^d \to (0,\infty)$ that are bounded and locally $\alpha$-Hölder continuous with $\alpha \in (0,1)$.
	
	Examples of this type (in fact, of a more general type) are considered in~\cite[Section~4]{Delmonte2011}, where it is shown that this equation fits into the setting of the present section and that the solution semigroup of~\eqref{eq:concrete-example-on-unbounded-domain} is immediately compact on $C_b(\bbR^d; \bbR^2)$ \cite[Theorem~4.2]{Delmonte2011} (but note that the parameter $\alpha$ is used with different meaning there).
	
	It is not difficult to see that the matrix $V(x)$ is dissipative with respect to the $\ell^\infty$-norm on $\bbR^2$ for each $x \in \bbR^d$. Therefore, it follows from Theorem~\ref{thm:convergence-on-unbounded-domains} that the solution semigroup of~\eqref{eq:concrete-example-on-unbounded-domain} converges with respect to the operator norm on $C_b(\bbR^d; \bbR^2)$ as $t \to \infty$. The function $(\one, -\one)^T$ is an equilibrium, so the limit is non-zero for some initial values.
\end{example}

A few words about the choice of the potential $V$ in the preceding example are in order. The point about the sum of the two matrices in the definition of $V(x)$ is that it prevents the matrices $V(x)$ from being simultaneously diagonalisable (except for very simply choices of $v$ and $w$). In the case of simultaneous diagonalisability of the $V(x)$, we could transform the equation~\eqref{eq:concrete-example-on-unbounded-domain} into a form where both components decouple -- which means that we would essentially deal with two unrelated scalar equations.

\subsubsection*{Acknowledgements}

We would like to thank Markus Haase for pointing out to us the relation between the semigroups at infinity with respect to the strong and the operator norm topology explained in Remark~\ref{rem:embedded-semigroup-and-strong-op-topology}, and for providing an argument that simplified the proof of Theorem~\ref{thm:trivial-group-by-rational-spectrum}. 

We are also indebted to Abdelaziz Rhandi for remarking that the evolution equation~\eqref{eq:concrete-example-on-unbounded-domain} in Example~\ref{ex:concrete-example-on-unbounded-domain} decouples if all the matrices $V(x)$ are simultaneously diagonalisable.

\appendix

\section{On poles of operator resolvents} \label{section:poles-of-op-resolvents}

In the following proposition we briefly recall a result about poles of the resolvent of a linear operator. This result is needed in the proof of Proposition~\ref{prop:compactness-for-single-operators}.

\begin{proposition} \label{prop:pole-of-resolvent-by-resolvent-convergence}
	Let $T$ be a bounded linear operator on a complex Banach space $E$ and let $(\mu_j)_j$ be a net in the resolvent set of $T$ which converges to a number $\lambda \in \bbC$.  Then the following assertions hold:
	\begin{enumerate}[\upshape (a)]
		\item $\lambda \in \bbC \setminus \spec(T)$ if and only if the net $\big((\mu_j - \lambda)\Res(\mu_j,T)\big)_j$ converges to the zero operator.
		\item $\lambda$ is a spectral value of $T$ and a first order pole of the resolvent function $\Res(\argument, T)$ if and only if the net $\big((\mu_j - \lambda)\Res(\mu_j,T)\big)_j$ converges to a non-zero operator $P \in \calL(E)$. \\
		In this case, $P$ is the spectral projection associated with the pole $\lambda$.
	\end{enumerate}
\end{proposition}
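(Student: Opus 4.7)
For part (a), the forward implication is an immediate consequence of the continuity of the resolvent function on the resolvent set: if $\lambda \in \bbC \setminus \spec(T)$, then $\Res(\mu_j, T) \to \Res(\lambda, T)$ in norm while $\mu_j - \lambda \to 0$, and the product tends to $0$. For the reverse direction I would exploit the purely algebraic identity $(\mu - \lambda)\Res(\mu, T) = I - (\lambda - T)\Res(\mu, T)$, and its right-sided counterpart $(\mu-\lambda)\Res(\mu, T) = I - \Res(\mu, T)(\lambda - T)$. The hypothesis then yields $(\lambda - T)\Res(\mu_j, T) \to I$ and $\Res(\mu_j, T)(\lambda - T) \to I$ in operator norm, so eventually both products are invertible in $\calL(E)$; composing with the corresponding inverses gives a two-sided inverse of $\lambda - T$, so $\lambda \in \bbC \setminus \spec(T)$.

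The forward direction of (b) follows from the Laurent expansion at a first-order pole: $\Res(\mu, T) = (\mu - \lambda)^{-1} P + R_0(\mu)$ with $R_0$ analytic at $\lambda$ and $P$ the spectral projection, so $(\mu_j - \lambda)\Res(\mu_j, T) \to P$, and $P \ne 0$ precisely because $\lambda$ is a genuine pole.

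For the converse of (b), write $R_j \coloneqq \Res(\mu_j,T)$. First, part (a) already forces $\lambda \in \spec(T)$, because otherwise the net would have to converge to $0$. Next, I would plug the hypothesis into the resolvent identity $R_j = \Res(\nu, T) + (\nu - \mu_j) R_j \Res(\nu, T)$ for a fixed $\nu \in \bbC \setminus \spec(T)$, multiply through by $(\mu_j - \lambda)$, and take limits: the first term vanishes, while the second becomes $(\nu - \lambda) P \Res(\nu, T)$, so $P = (\nu - \lambda) P \Res(\nu, T)$. The symmetric computation yields $P = (\nu - \lambda) \Res(\nu, T) P$. Thus $P$ commutes with every resolvent of $T$ (and hence with $T$), and $\Res(\nu,T)P = (\nu-\lambda)^{-1}P$ for every $\nu \in \bbC \setminus \spec(T)$. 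From this I would read off that $P$ is idempotent (by computing $P^2 = \lim_j (\mu_j - \lambda) R_j P = \lim_j (\mu_j - \lambda)(\mu_j - \lambda)^{-1} P = P$) and that $TP = \lambda P$, so $T$ acts as $\lambda \id$ on $PE$.

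It remains to show that $P$ coincides with the spectral projection associated with $\lambda$ and that the pole is of first order. Since $P$ commutes with $T$, both $PE$ and $\ker P$ are invariant, and a direct verification shows that $R_j|_{\ker P}$ is the resolvent of $T|_{\ker P}$ at $\mu_j$. By what we already established, $(\mu_j - \lambda) R_j (I - P) \to P(I - P) = 0$ in $\calL(E)$, so $(\mu_j - \lambda) R_j|_{\ker P} \to 0$ in $\calL(\ker P)$; part (a), applied to the restricted operator, yields $\lambda \in \bbC \setminus \spec(T|_{\ker P})$. Thus $\lambda$ is isolated in $\spec(T)$, the decomposition $E = PE \oplus \ker P$ satisfies $\spec(T|_{PE}) = \{\lambda\}$ and $\spec(T|_{\ker P}) = \spec(T) \setminus \{\lambda\}$, and by uniqueness of the spectral projection associated with an isolated spectral set we have $P = $ (spectral projection at $\lambda$). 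Finally, $(T - \lambda)|_{PE} = 0$ forces the pole order to equal $1$. The main obstacle in the whole argument is the algebraic manipulation in the converse of (b): extracting from the single limit relation $(\mu_j - \lambda) R_j \to P$ all the structural information about $P$ (idempotency, commutation with $T$, and the resolvent formula) via a careful use of the resolvent equation.
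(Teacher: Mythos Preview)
Your proof is correct and follows essentially the same route as the paper for part~(b): derive the key identity $\Res(\nu,T)P = (\nu-\lambda)^{-1}P$ from the resolvent equation, deduce that $P$ is a projection commuting with $T$, and then split $E = PE \oplus \ker P$ to conclude that $\lambda$ is an isolated first-order pole with spectral projection $P$.

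The only notable difference is in the reverse implication of~(a). The paper invokes the standard estimate $\norm{\Res(\mu,T)} \ge 1/\dist(\mu,\spec(T))$, which immediately rules out $\lambda \in \spec(T)$ if $(\mu_j-\lambda)\Res(\mu_j,T) \to 0$. You instead use the algebraic identity $(\mu-\lambda)\Res(\mu,T) = I - (\lambda-T)\Res(\mu,T)$ to produce, for large $j$, a two-sided inverse of $\lambda - T$. Both arguments are short; the paper's is slightly quicker, while yours has the minor advantage of being entirely self-contained (no appeal to the distance-to-spectrum bound).
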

\begin{proof}
	(a) The implication ``$\Rightarrow$'' is obvious, and the converse implication ``$\Leftarrow$'' follows from that well-known fact that, for every $\mu$ in the resolvent set of $T$, the norm of $\Res(\mu,T)$ is no less than $1/\dist(\mu, \spec(T))$ (where $\dist$ denotes the distance in the complex plane).
	
	(b) If $\lambda$ is a spectral value of $T$ and a first order pole of the resolvent, then the net $\big((\mu_j - \lambda)\Res(\mu_j,T)\big)_j$ obviously converges to the spectral projection associated with $\lambda$, and this spectral projection is non-zero. 
	
	Now assume conversely that the net $\big((\mu_j - \lambda)\Res(\mu_j,T)\big)_j$ converges to an operator $P \not= 0$. It then follows from~(a) that $\lambda$ is a spectral value of $T$; in particular, the elements of the net $(\mu_j)_j$ are eventually distinct from $\lambda$. Hence, it follows from the resolvent identity that
	\begin{align}
		\Res(\mu,T)P = \frac{P}{\mu - \lambda} \label{eq:resolvent-and-abel-projection}
	\end{align}
	for each $\mu$ in the resolvent set of $T$. From this we immediately obtain $P^2 = P$, i.e., $P$ is a projection; moreover, $P$ clearly commutes with $T$, so $T$ splits over the decomposition $E = \ker P \oplus PE$.
	
	It follows from~(a) that $\lambda$ is in the resolvent set of $T|_{\ker P}$. Moreover, we conclude from~\eqref{eq:resolvent-and-abel-projection} that $\lambda$ is a first order pole of the resolvent of $T|_{PE}$. Consequently, $\lambda$ is also a first order pole of the resolvent of $T$.
\end{proof}

\section{A few facts about nets} \label{section:universal-nets}

In this appendix we recall a few facts about nets and universal nets that are needed in the main text, in particular in Proposition~\ref{prop:characterisation-of-compact-and-non-empty}. Recall that a net $(x_j)$ in a set $X$ is called a \emph{universal net} if, for each $A \subseteq X$, the net is either eventually contained in $A$ or eventually contained in $X \setminus A$. If a subnet $(x_{j_i})$ of a net $(x_j)$ is a universal net, then we call $(x_{j_i})$ a \emph{universal subnet} of $(x_j)$. It follows from Zorn's lemma that every net has a universal subnet.

If $X$ is a topological Hausdorff space, then a subset $A \subseteq X$ is compact if and only if every universal net in $A$ converges to an element of $A$. In the following lemma we collect a few facts about metric spaces. For a proof we refer for instance to \cite[Theorem~B.3]{Glueck2019}, where these facts are given in a slightly more general topological setting.

\begin{lemma} \label{lemma:set-of-cluster-points}
	Let $(x_\alpha)_{\alpha \in I}$ be a net in a metric space $X$ and let
	\begin{align*}
		C \coloneqq \bigcap_{\beta \in I} \overline{\{x_\alpha \colon \alpha \geq \beta\}}
	\end{align*}
	be its set of cluster points. Consider the following assertions.
	\begin{enumerate}[\upshape (i)]
		\item The set $C$ is non-empty and compact.
		\item Each subnet of $(x_\alpha)_{\alpha \in I}$ has a convergent subnet.
		\item Each universal subnet of $(x_\alpha)_{\alpha \in I}$ converges.
		\item For each cofinal subsequence $(\alpha_n)_{n \in \bbN}$ in $I$ the sequence $(x_{\alpha_n})_{n \in \bbN}$ has a cluster point.
	\end{enumerate}
	Then $\text{\upshape (i)} \Leftarrow \text{\upshape (ii)}\Leftrightarrow \text{\upshape (iii)} \Rightarrow \text{\upshape (iv)}$. If, in addition, $I$ contains a cofinal sequence, then $\text{\upshape (iv)} \Rightarrow \text{\upshape (i)}$ as well.
\end{lemma}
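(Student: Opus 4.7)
The plan is to establish the implications in the order $(ii)\Leftrightarrow (iii)$, $(ii)\Rightarrow (i)$, $(iii)\Rightarrow (iv)$, and finally $(iv)\Rightarrow (i)$ under the cofinal sequence assumption. The two standard facts that I will rely on throughout are (a) every net admits a universal subnet by Zorn's lemma and (b) a universal net converges to any of its cluster points (since for each open neighbourhood $U$ of a cluster point, universality forces eventual containment in $U$ rather than in $X\setminus U$). For $(ii)\Rightarrow (iii)$, I would take a universal subnet, apply (ii) to find a convergent sub-subnet, and invoke (b) to conclude convergence of the universal subnet itself. The converse $(iii)\Rightarrow (ii)$ is immediate by passing from an arbitrary subnet to a universal sub-subnet and applying (iii). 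The implication $(iii)\Rightarrow (iv)$ then follows because any cofinal subsequence is, in particular, a subnet, which admits a convergent universal subnet by (iii); the limit is a cluster point of the subsequence.

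For $(ii)\Rightarrow (i)$, non-emptiness of $C$ is immediate: the net is a subnet of itself and hence has a convergent sub-subnet by (ii), whose limit lies in $C$. Since $C$ is an intersection of closed sets it is closed, so in a metric space it suffices to prove total boundedness. Assuming the contrary, one obtains $\varepsilon>0$ and an $\varepsilon$-separated sequence $(c_n)\subseteq C$. The key construction is a subnet indexed by the product directed set $\bbN\times I$: for each pair $(n,\beta)$, use the cluster-point property of $c_n$ to choose $\gamma(n,\beta)\ge \beta$ with $d(x_{\gamma(n,\beta)},c_n)<\varepsilon/3$. This defines a subnet of $(x_\alpha)$, which has a convergent sub-subnet by (ii); its limit $x^*$ then lies within $2\varepsilon/3$ of $c_n$ for indices $(n,\beta)$ with $n$ arbitrarily large, contradicting $\varepsilon$-separation. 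This total boundedness argument is where I expect the main technical care is needed, particularly in verifying that $(n,\beta)\mapsto x_{\gamma(n,\beta)}$ really is a subnet (cofinality rather than monotonicity of the index map) and in extracting a convergent sub-subnet along which $n\to\infty$.

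For $(iv)\Rightarrow (i)$ under the assumption that $I$ contains a cofinal sequence, I would first replace that sequence by a monotone cofinal sequence $(\alpha_n)$, constructed inductively using directedness. Non-emptiness of $C$ follows by applying (iv) to $(\alpha_n)$: a cluster point $x^*$ of $(x_{\alpha_n})$ is automatically a cluster point of $(x_\alpha)$, since for each $\beta\in I$ one can pick $n_0$ with $\alpha_{n_0}\ge\beta$ and then exploit monotonicity together with the cluster-point property along $n\ge n_0$. For compactness, I would run the same total boundedness strategy: given a hypothetical $\varepsilon$-separated sequence $(c_n)$ in $C$ and the monotone cofinal $(\alpha_n)$, choose $\gamma_n\ge\alpha_n$ with $d(x_{\gamma_n},c_n)<\varepsilon/3$, so that $(\gamma_n)$ is again cofinal, and apply (iv) to extract a convergent subsequence and a contradictory cluster of $c_n$-values near its limit.
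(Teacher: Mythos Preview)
The paper does not prove this lemma itself but refers to \cite[Theorem~B.3]{Glueck2019}. Your strategy is sound and provides a self-contained argument, but there is one genuine gap in $(ii)\Rightarrow(i)$: you write that since $C$ is closed ``in a metric space it suffices to prove total boundedness,'' which requires completeness of $X$ --- an assumption not made in the lemma. Fortunately your $\bbN\times I$ construction already proves more than you extract from it: the limit $x^*$ of the convergent sub-subnet is a cluster point of $(x_\alpha)_{\alpha\in I}$ and hence lies in $C$, and your estimate shows that $x^*$ is a cluster point of the sequence $(c_n)$. So rather than aiming for a contradiction to $\varepsilon$-separation, take an arbitrary sequence $(c_n)$ in $C$, run the same construction with $1/n$ in place of $\varepsilon/3$, and conclude directly that $(c_n)$ has a subsequence converging to a point of $C$. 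This yields sequential compactness of $C$ without any completeness hypothesis. (As a side note, with $\varepsilon/3$ as written you only obtain $d(c_{n_1},c_{n_2})<4\varepsilon/3$, which does not contradict $\varepsilon$-separation; the rephrasing makes this moot.) The same correction applies to the compactness argument in $(iv)\Rightarrow(i)$.

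One terminological point: for $(iii)\Rightarrow(iv)$ your claim that ``any cofinal subsequence is a subnet'' is correct under the convention that a cofinal sequence $(\alpha_n)$ in $I$ is one with $\alpha_n\ge\beta$ eventually, for every $\beta\in I$. This is the intended reading (and the one used in \cite{Glueck2019}); under the weaker reading where only the range $\{\alpha_n:n\in\bbN\}$ is required to be cofinal as a set, the implication $(iii)\Rightarrow(iv)$ can actually fail, as witnessed by $I=\bbN\cup\{\omega\}$ with $\omega$ a maximum, $x_n=n$, $x_\omega=0$, and $(\alpha_n)=(\omega,2,3,4,\dots)$. Your later remark about first passing to a monotone cofinal sequence suggests you may be mixing the two readings; it is worth making the convention explicit.
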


\bibliographystyle{plain}
\bibliography{literature}

\end{document}